
\documentclass[11pt, reqno]{amsart}
\usepackage{geometry}                 
\usepackage{graphicx}
\usepackage{amssymb,latexsym,amsmath}
\usepackage{amsfonts,amsthm}
\usepackage{amscd}
\usepackage{mathrsfs}
\usepackage{cite,upref}
\usepackage{eucal}
\usepackage{epstopdf}
\usepackage{amsgen}
\usepackage{xspace}
\usepackage{verbatim}

\newcommand{\gd}{\Delta}

\newcommand{\inpt}[1]{\langle #1 \rangle}

\newcommand{\mX}{\mathcal{X}}

\newcommand{\mH}{\mathcal{H}}
\newcommand{\mB}{\mathcal{B}}

\newcommand{\ms}{\mathscr}

\newcommand{\gw}{\Omega}
\newcommand{\ga}{\gamma}
\newcommand{\gb}{\beta}

\newcommand{\gk}{\kappa}
\newcommand{\om}{\omega}

\newcommand{\ve}{\varepsilon}
\newcommand{\snid}{\smallskip \noindent}

\newcommand{\pdr}{\partial}

\newcommand{\csg}{\{ S(t)\}_{t\geq0}}

\newcommand{\tvgm}{\int_{\gw_{M}^v}}
\newcommand{\tzgm}{\int_{\gw_{M}^z}}
\newcommand{\nvgm}[1]{\|#1\|_{\gw_{M}^{|v|}}^2}
\newcommand{\nzgm}[1]{\|#1\|_{\gw_{M}^{|z|}}^2}

\numberwithin{equation}{section}

\theoremstyle{plain}
\newtheorem{theorem}{Theorem}
\newtheorem{lemma}{Lemma}

\theoremstyle{definition}
\newtheorem{definition}{Definition}
\theoremstyle{remark}

\begin{document}

\title[A COUPLED TWO-CELL BRUSSELATOR MODEL]{\textbf{Global Attractor of \\ A Coupled Two-Cell Brusselator Model}}
\dedicatory{Dedicated to George R. Sell on the Occasion of His 70th Birthday}
\author{Yuncheng You} 
\address{Department of Mathematics and Statistics \\
University of South Florida \\
Tampa, FL 33620} 
\email{you@math.usf.edu} 
\keywords{Brusselator, two-cell model, global dynamics, global attractor, absorbing set, asymptotic compactness}
\subjclass[2000]{37L30, 35B40, 35B41, 35K55, 35K57, 35Q80, 80A32, 92B05.}
\date{}

\begin{abstract}
In this work the existence of a global attractor for the solution semiflow of the coupled two-cell Brusselator model equations is proved. A grouping estimation method and a new decomposition approach are introduced to deal with the challenge in proving the absorbing property and the asymptotic compactness of this type of four-variable reaction-diffusion systems with cubic autocatalytic nonlinearity and with linear coupling. It is also proved that the Hausdorff dimension and the fractal dimension of the global attractor are finite. 
\end{abstract}
\maketitle

\section{\textbf{Introduction}}

Consider a coupled two-cell model of reaction-diffusion systems with Brusselator kinetics \cite{GLI07, KS80, aK83, SM82}, 
\begin{align}
	\frac{\pdr u}{\pdr t} &= d_1 \gd u + a - (b + 1)u + u^2 v + D_1 (w - u), \label{equ} \\
	\frac{\pdr v}{\pdr t} &= d_2 \gd v + bu - u^2 v + D_2 (z - v), \label{eqv} \\
	\frac{\pdr w}{\pdr t} &= d_1 \gd w + a - (b + 1)w + w^2 z + D_1 (u - w), \label{eqw} \\
	\frac{\pdr z}{\pdr t} &= d_2 \gd z + bw - w^2 z + D_2 (v - z),  \label{eqz}
\end{align}
for $t > 0$, on a bounded domain $\gw \subset \Re^{n}, n \leq 3$, that has a locally Lipschitz continuous boundary, with the homogeneous Dirichlet boundary condition 
\begin{equation} \label{dbc}
	u(t, x) = v(t, x) = w (t, x) = z (t, x) = 0, \qquad t > 0, \; x \in \partial \gw,
\end{equation}
and an initial condition
\begin{equation} \label{ic}
        u(0,x) = u_0 (x), \; v(0, x) = v_0 (x), \; w(0,x) = w_0 (x), \; z(0, x) = z_0 (x),\qquad x \in \gw,
\end{equation}
where $d_1, d_2, a, b, D_1$, and $D_2$ are positive constants. We do not assume that the initial data $u_0, v_0, w_0, z_0$, nor the solutions $u(t, x), v(t, x), w (t, x), z(t, x)$ to be nonnegative. In this work, we shall study the asymptotic dynamics of the solution semiflow generated by this problem. 

The Brusselator is originally a system of two ordinary differential equations as  a model for cubic autocatalytic chemical or biochemical reactions, cf.  \cite{PL68, AN75, sS94}.  The name is after the hometown of scientists who proposed it.  Brusselator kinetics describes the following scheme of chemical reactions 
\begin{align*}
	\textup{A} & \longrightarrow \textup{U}, \\
	\textup{B} + \textup{U} & \longrightarrow \textup{V} + \textup{D}, \\
	2 \textup{U} + \textup{V} & \longrightarrow 3 \textup{U}, \\
	\textup{U} & \longrightarrow \textup{E}, 
\end{align*}
where \textup{A}, \textup{B}, \textup{D}, \textup{E}, \textup{U}, and \textup{V} are chemical reactants or products. Let $u(t, x)$ and $v(t, x)$ be the concentrations of \textup{U} and \textup{V}, and assume that the concentrations of the input compounds \textup{A} and \textup{B} are held constant during the reaction process, denoted by $a$ and $b$ respectively. Then one obtains a system of two nonlinear reaction-diffusion equations called (diffusive) \textbf{Brusselator equations},
\begin{align}  
	\frac{\partial u}{\partial t} &= d_1 \gd u + u^{2}v  - (b + 1)u + a,  \label{bru}\\
	\frac{\partial v}{\partial t} &= d_2 \gd v - u^{2}v + bu, \label{brv}
\end{align}

There are several known examples of autocatalysis which can be modeled by the Brusselator equations, such as ferrocyanide-iodate-sulphite reaction, chlorite-iodide-malonic acid reaction, arsenite-iodate reaction, some enzyme catalytic reactions, and fungal mycelia growth, cf. \cite{AO78, AN75, BD95, iE84}.

Numerous studies by numerical simulations or by mathematical analysis, especially after the publications \cite{LMOS93, jP93} in 1993, have shown that the autocatalytic reaction-diffusion systems such as the Brusselator equations and the Gray-Scott equations \cite{GS83, GS84} exhibit rich spatial patterns (including but not restricted to Turing patterns) and complex bifurcations \cite{AO78, BGMLF00, BD95, gD87, DKZ97, FXWO08, IK06, PPG01, PW05, WW03} as well as interesting dynamics \cite{CQ07, iE84, ER83, KCD97, KEW06, yQ07, RR95, RPP97, Wet96, YZE04} on 1D or 2D domains.

For Brusselator equations and the other cubic autocatalytic model equations of space dimension $n \leq 3$, however, we have not seen substantial research results in the front of global dynamics until recently this author proved the existence of a global attractor for Bruuselator equations \cite{yY07}, Gray-Scott equations \cite{yY08}, Selkov equations \cite{yY09a}, and the reversible Schnackenberg equations \cite{yY09b}. 

In this paper, we shall show the existence of a global attractor in the product $L^2$ phase space for the solution semiflow of the coupled two-cell  Brusselator equations \eqref{equ}--\eqref{eqz} with homogeneous Dirichlet boundary conditions \eqref{dbc}.  

This study of global dynamics of the two-cell model of four coupled components is a substantial advance from the one-cell model of two-component reaction-diffusion systems toward the biological network dynamics \cite{GLI07, hK02}. Multi-cell models generically mean the coupled ODEs or PDEs with large number of unknowns (components), which appear widely in the literature of systems biology as well as cell biology. Here understandably "cell" is a generic term that may not be narrowly or directly interpreted as a biological cell. Coupled cells with diffusive reaction and mutual mass exchange are often adopted as model systems for description of processes in living cells and tissues, or in distributed chemical reactions and transport for compartmental reactors \cite{TCN01, SM82}. The mathematical analysis combined with semi-analytical simulations seems to become a common approach to understanding the complicated molecular interactions and signaling pathways in many cases. 

In this regard, unfortunately, the problem with high dimensionality can arise and puzzle the research when the number of molecular species in the system turns out to be very large, which makes the behavior simulation extremely difficult or computationally too expensive. Thus theoretical results on multi-cell model dynamics can give insights to deeper exploration of various signal transductions and tempro-spatial pattern formations. 

For most reaction-diffusion systems consisting of two or more equations arising from the scenarios of autocatalytic chemical reactions or biochemical activator-inhibitor reactions, such as the Brusselator equations and the coupled two-cell Brusselator equations here, the asymptotically dissipative sign condition in vector version
$$
	\lim_{|s| \to \infty} F(s) \cdot s \leq 0
$$
is inherently not satisfied by the opposite-signed and coupled nonlinear terms, see \eqref{opF} later. Besides a serious challenge in dealing with this coupled two-cell model is that, due to the coupling of the two groups of variables $u, v$ and $w, z$, one can no longer make a dissipative \emph{a priori} estimate on the $v$-component by using the $v$-equation separately and then use the sum $y (t, x) = u(t, x) + v (t, x)$ separately to estimate the $u$-component in proving absorbing property and in proving asymptotical compactness of the solution semiflow as we did in \cite{yY07, yY08, yY09a}. The novel mathematical feature in this paper is to overcome this coupling obstacle and make the \emph{a priori} estimates by a method of \emph{grouping estimation} combined with a new decomposition approach.

We start with the formulation of an evolutionary equation associated with the two-cell Brusselator equations. Define the product Hilbert spaces as follows,
\begin{align*}
    	H &= L^2 (\gw) \times L^2 (\gw) \times L^2 (\gw) \times L^2 (\gw), \\
    	E &=  H_{0}^{1}(\gw) \times H_{0}^{1}(\gw) \times H_{0}^{1}(\gw) \times H_{0}^{1}(\gw), \\
    	\Pi =  (H_{0}^{1}(\gw) \cap H^{2}(\gw))& \times (H_{0}^{1}(\gw) \cap H^{2}(\gw)) \times (H_{0}^{1}(\gw) \cap H^{2}(\gw)) \times (H_{0}^{1}(\gw) \cap H^{2}(\gw)).
\end{align*}
The norm and inner-product of $H$ or the component space $L^2 (\gw)$ will be denoted by
$\| \, \cdot \, \|$ and $\inpt{\,\cdot , \cdot\,}$, respectively. The norm of $L^p (\gw)$ will be denoted by $\| \, \cdot \, \|_{L^{p}}$ if $p \ne 2$. By the Poincar$\Acute{e}$ inequality and the homogeneous Dirichlet boundary condition \eqref{dbc}, there is a constant $\ga > 0$ such that
\begin{equation} \label{pcr}
    \| \nabla \varphi \|^2 \geq \ga \| \varphi \|^2, \quad \textup{for} \;  \varphi \in H_{0}^{1}(\gw) \;\; \textup{or} \; \; E,
\end{equation}
and we shall take $\| \nabla \varphi \|$ for the equivalent norm of the space $E$ and of the component space $H_{0}^{1}(\gw)$. We use $| \, \cdot \, |$ to denote an absolute value or a vector norm in a Euclidean space.

It is easy to check that, by the Lumer-Phillips theorem and the analytic semigroup generation theorem \cite{SY02}, the linear operator
\begin{equation} \label{opA}
        A =
        \begin{pmatrix}
            d_1 \gd     & 0    &0    &0\\[3pt]
            0 & d_2 \gd    &0   &0\\[3pt]
            0 &0  &d_1 \gd &0\\[3pt]
            0 &0 &0 &d_2 \gd
        \end{pmatrix}
        : D(A) (= \Pi) \longrightarrow H
\end{equation}
is the generator of an analytic $C_0$-semigroup on the Hilbert space $H$, which will be denoted by $e^{At}, t \geq 0$. By the fact that $H_{0}^{1}(\gw) \hookrightarrow L^6(\gw)$ is a continuous embedding for $n \leq 3$ and using the generalized H\"{o}lder inequality, 
$$
	\| u^{2}v \| \leq \| u \|_{L^6}^2 \| v \|_{L^6}, \quad \| w^{2}z \| \leq \| w \|_{L^6}^2 \| z \|_{L^6}, \quad  \textup{for} \; u, v, w, z \in L^6 (\gw),
$$
one can verify that the nonlinear mapping
\begin{equation} \label{opF}
    F(g) =
        \begin{pmatrix}
            a - (b+1)u + u^2 v + D_1 (w - u)  \\[3pt]
            bu - u^2 v + D_2 (z - v) \\[3pt]
            a - (b+1)w + w^2 z + D_1 (u - w)  \\[3pt]
            bw - w^2 z + D_2 (v - z) 
        \end{pmatrix}
        : E \longrightarrow H,
\end{equation}
where $g = (u, v, w, z)$, is well defined on $E$ and locally Lipschitz continuous. Then the initial-boundary value problem \eqref{equ}--\eqref{ic} is formulated into the following initial value problem,
\begin{equation} \label{eveq} 
    \frac{dg}{dt} = A g + F(g), \quad t > 0, \\[3pt]
\end{equation}
$$
	    g(0) = g_0 = \textup{col} \, (u_0, v_0, w_0, z_0).
$$
where $g (t) = \textup{col} \, (u(t, \cdot), v(t, \cdot), w(t, \cdot), z(t, \cdot))$, simply written as $(u(t, \cdot), v(t, \cdot), w(t, \cdot), z(t, \cdot))$. Accordingly we shall write $g_0 = \textup{col} \, (u_0, v_0, w_0, z_0)$. 

By conducting \emph{a priori} estimates on the Galerkin approximate solutions of the initial value problem \eqref{eveq} and the weak convergence, we can prove the local existence and uniqueness of the weak solution $g(t)$ of \eqref{eveq} in the sense of J. M. Ball specified in \cite{jB77}, which then is shown to be a local mild solution \cite{jB77} and further turns out to be a local strong solution \cite[Theorem 46.2]{SY02}. Moreover, by taking the $H$-inner-product of \eqref{eveq} with this strong solution $g(t)$ itself and conducting \emph{a priori} estimates, one can prove the continuous dependence of the solutions on the initial data and the following property, which is  satisfied by the strong solution $g$,
\begin{equation} \label{soln}
    g \in C([0, T_{max}); H) \cap C^1 ((0, T_{max}); H) \cap L^2 (0, T_{max}; E),
\end{equation}
where $I_{max} = [0, T_{max})$ is the maximal interval of existence.

We refer to \cite{jH88, SY02, rT88} and many references therein for the concepts and basic facts in the theory of infinite dimensional dynamical systems, including few given below for clarity. 

\begin{definition} \label{D:abs}
Let $\{S(t)\}_{t \geq 0}$ be a semiflow on a Banach space $\mX$. A bounded subset $B_0$ of $\mX$ is called an \emph{absorbing set} in $\mX$ if, for any bounded subset $B \subset \mX$, there is some finite time $t_0 \geq 0$ depending on $B$ such that $S(t)B \subset B_0$ for all $t > t_0$.
\end{definition}

\begin{definition} \label{D:atr}
Let $\{S(t)\}_{t \geq 0}$ be a semiflow on a Banach space $\mX$. A subset $\ms{A}$ of $\mX$ is called a \emph{global attractor} for this semiflow, if the following conditions are satisfied: 

(i) $\ms{A}$ is a nonempty, compact, and invariant set in the sense that 
$$
	S(t)\ms{A} = \ms{A} \quad \textup{for any} \; \;  t \geq 0. 
$$

(ii) $\ms{A}$ attracts any bounded set $B$ of $\mX$ in terms of the Hausdorff distance, i.e.
$$
	\text{dist} (S(t)B, \ms{A}) = \sup_{x \in B} \inf_{y \in \ms{A}} \| x - y\|_{\mX} \to 0, \quad \text{as} \; \, t \to \infty.
$$
\end{definition}

\begin{definition} \label{D:asp}
A semiflow $\{S(t)\}_{t \geq 0}$ on a Banach space $\mX$ is called \emph{asymptotically compact} if for any bounded sequences $\{x_n \}$ in $\mX$ and $\{t_n \} \subset (0, \infty)$ with $t_n \to \infty$, there exist subsequences $\{x_{n_k}\}$ of $\{u_n \}$ and $\{t_{n_k}\}$ of $\{t_n\}$, such that $\lim_{k \to \infty} S(t_{n_k})x_{n_k}$ exists in $\mX$.
\end{definition}

Here is the main result of this paper. We emphasize that this result is established unconditionally, neither assuming initial data or solutions are nonnegative, nor imposing any restriction on any positive parameters involved in the equations \eqref{equ}--\eqref{eqz}.

\begin{theorem}[Main Theorem] \label{Mthm}
For any positive parameters $d_1, d_2, a, b, D_1$,and $D_2$, there exists a global attractor $\ms{A}$ in the phase space $H$ for the solution semiflow $\csg$ generated by the coupled two-cell Brusselator evolutionary equation \eqref{eveq}.
\end{theorem}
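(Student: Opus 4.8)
The strategy is the standard one for dissipative evolution systems: show that \eqref{eveq} generates a \emph{global} semiflow $\csg$ on $H$, that this semiflow admits a bounded absorbing set in $H$, and that it is asymptotically compact in the sense of Definition~\ref{D:asp}. By the classical theory recalled in \cite{jH88, SY02, rT88}, these three properties yield the global attractor $\ms{A} = \bigcap_{s \geq 0}\overline{\bigcup_{t \geq s} S(t)B_0}$ for any bounded absorbing ball $B_0$, and the invariance and attraction properties in Definition~\ref{D:atr} then follow automatically.

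\emph{Step 1: global existence and an absorbing set in $H$, by grouping estimation.} The local strong solution with the regularity in \eqref{soln} must be continued to $[0,\infty)$ by a uniform \emph{a priori} bound in $H$. The direct $L^2$ estimate — testing \eqref{equ}--\eqref{eqz} componentwise against $(u,v,w,z)$ — fails, since it leaves the uncontrolled cubic terms $\int_\gw u^3 v$ and $\int_\gw w^3 z$, although it does produce the favorable dissipative contributions $-\int_\gw u^2 v^2$, $-\int_\gw w^2 z^2$ and $-D_1\|u-w\|^2$, $-D_2\|v-z\|^2$. The grouping estimation combines this estimate with the ones obtained by testing the intra-cell sums, namely $\langle u_t + v_t,\, u+v\rangle$ and $\langle w_t + z_t,\, w+z\rangle$, in which the opposite-signed autocatalytic terms $\pm u^2 v$ and $\pm w^2 z$ cancel identically, at the expense of generating mixed-diffusion cross terms $(d_1+d_2)\langle\nabla u,\nabla v\rangle$, $(d_1+d_2)\langle\nabla w,\nabla z\rangle$ and coupling cross terms. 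Forming a suitably weighted sum of all these estimates and absorbing the cross terms and the lower-order contributions by Young's inequality, the Poincar\'e inequality \eqref{pcr}, and the embedding $H_0^1(\gw)\hookrightarrow L^6(\gw)$, one reaches a differential inequality
\begin{equation*}
	\frac{d}{dt}\,\Phi(g(t)) + c_1\,\Phi(g(t)) \leq C_1, \qquad t>0,
\end{equation*}
for a nonnegative functional $\Phi$ bounded below by a multiple of $\|g\|_H^2$, with $c_1,C_1>0$ depending only on $d_1,d_2,a,b,D_1,D_2$ and $\gw$. Gronwall's lemma then gives $T_{max}=\infty$ and a bounded absorbing ball $B_0$ in $H$; integrating the same inequality over $[t,t+1]$ for trajectories inside $B_0$ yields in addition $\int_t^{t+1}\|g(s)\|_E^2\,ds \leq C_2$.

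\emph{Step 2: asymptotic compactness, by the decomposition approach.} Since $\gw$ is bounded, the embedding $E = (H_0^1(\gw))^4 \hookrightarrow H$ is compact, so it suffices to show that the tail of every trajectory issuing from a bounded set is, up to a term tending to $0$ in $H$, contained in a fixed bounded subset of $E$. I would decompose the solution as $g(t)=\gz(t)+\eta(t)$, where $\gz$ carries the initial data and solves the equation truncated to the linear operator $A$ together with the strictly dissipative linear reaction terms, so that $\|\gz(t)\|_H\to 0$ exponentially by \eqref{pcr}; the remainder satisfies $\eta(0)=0$ and an equation containing the cubic and coupling terms, and a uniform Gronwall argument on $\|\nabla\eta(t)\|^2$, fed by the $H$-absorbing property of Step~1 and the integral bound $\int_t^{t+1}\|g\|_E^2\,ds\leq C_2$, shows that $\eta(t)$ stays in a bounded subset of $E$ for large $t$. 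Hence for any bounded $\{g_{0,n}\}\subset H$ and $t_n\to\infty$ the sequence $S(t_n)g_{0,n}$ splits into a part converging to $0$ in $H$ and a part lying in a precompact subset of $H$, so it has a convergent subsequence; this is asymptotic compactness.

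The principal obstacle is the grouping estimate of Step~1: one must choose the precise weights in the combination of the four componentwise estimates and the two intra-cell-sum estimates so that \emph{all} the cubic terms are annihilated and, after that cancellation, the surviving diffusion and linear reaction terms still assemble into a coercive form in spite of $d_1\neq d_2$ (so that $d_1\gd u+d_2\gd v$ is not $\gd(u+v)$ and the cross term $(d_1+d_2)\langle\nabla u,\nabla v\rangle$ genuinely need not carry the dissipative sign) and in spite of the two independent coupling constants $D_1,D_2$ entering the cross terms. Getting these weights right — presumably tied to $d_1,d_2$ — and then dominating the residual cross terms via \eqref{pcr}, at the cost of a smaller rate $c_1$, is the heart of the matter; a secondary technical point is confirming in Step~2 that $\eta$ is bounded in $E$ rather than merely in $H$, which requires handling $u^2v$ and $w^2z$ as maps $E\to H$ (cf.\ \eqref{opF}) in tandem with the integrated bound $C_2$.
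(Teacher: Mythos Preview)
Your Step~2 has a genuine gap. In the linear-plus-remainder split $g=\gz+\eta$, showing $\eta(t)$ bounded in $E$ requires testing the $\eta$-equation against $-\gd\eta$ and controlling the cubic forcing $u^2v$, $w^2z$ in $L^2(\gw)$. In three dimensions $\|u^2v\|^2\le C\|\nabla u\|^4\|\nabla v\|^2$, which is \emph{cubic} in $\|g\|_E^2$; neither the $H$-absorbing bound nor the time-integrated estimate $\int_t^{t+1}\|g\|_E^2\le C_2$ makes $\int_t^{t+1}\|u^2v\|^2$ finite, so the uniform Gronwall step you invoke is unfounded. The paper circumvents exactly this obstruction by a \emph{spatial} decomposition rather than a solution-splitting one: it shows (Lemma~\ref{L:ndc}) that $\gk$-contraction follows if, for every $\ve$, the $L^2$-mass of $(u,v)$ on $\{|v(t)|\ge M\}$ and of $(w,z)$ on $\{|z(t)|\ge M\}$ is $<\ve$ for large $t$ and $M$, while the restrictions to $\{|v(t)|<M\}$ and $\{|z(t)|<M\}$ are precompact. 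The point is that on the truncated region the dangerous term becomes $\int_{\{|v|<M\}}v^2|\nabla u|^2\le M^2\|\nabla u\|^2$, which \emph{is} time-integrable by the $C_2$ bound, and this drives the $H^1$-estimates in Lemmas~\ref{L:vzlk} and~\ref{L:uwlk}.

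Your Step~1 is also not the paper's route, and you yourself flag the obstacle without resolving it. The paper does not search for weights making a single combined quadratic form coercive. Instead it proceeds sequentially: first test \eqref{eqv}, \eqref{eqz} against $v,z$ to get $\|v\|^2+\|z\|^2$ and $\int_0^t(\|\nabla v\|^2+\|\nabla z\|^2)$ bounded (the signs $-u^2v^2$, $-w^2z^2$, $-D_2(v-z)^2$ are all favorable here). Then the combinations $y=u+v+w+z$ and $\psi=u+v-w-z$ each satisfy a \emph{linear} parabolic equation of the form $y_t=d_1\gd y-y+[(d_2-d_1)\gd(v+z)+\cdots]$, in which the cubic terms have cancelled and the only bad forcing is $(d_2-d_1)\gd(v\pm z)$, already controlled in $L^2_t$ by the first step. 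This avoids entirely the coercivity puzzle you describe for $d_1\ne d_2$. Your weighted-sum idea might be made to work, but the paper's sequential scheme is what actually closes.
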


For investigation of the asymptotic compactness for the two-dell Brusselator semiflow, we shall take the approach of showing the $\gk$-contracting property. Recall the definition of the Kuratowski measure of noncompactness for bounded sets in a Banach space $\mX$,
$$
    \gk (B) \overset{\text{def}}{=} \inf \left\{ \delta : B \; \text{has a finite cover by open sets in $\mX$ of diameters} < \delta \right\}.
$$
If $B$ is an unbounded set, then we define $\gk (B) = \infty$. The basic properties of the Kuratowski measure
are listed here, cf. \cite[Lemma 22.2]{SY02}.

Let $\mX$ be a Banach space and $\gk$ be the Kuratowski measure of noncompactness of bounded sets in $\mX$. Then $\gk$ has the following properties: 

\textup{(i)} $\gk(B) = 0$ if and only if $B$ is precompact in $\mX$, i.e. $Cl_{X} B$ is a compact set in $\mX$. 

\textup{(ii)} $\gk (B_1) \leq \gk (B_2)$ whenever $B_1 \subset B_2$. 

\textup{(iIi)} $\gk (B_1 + B_2) \leq \gk (B_1) + \gk (B_2)$, for any linear sum $B_1 + B_2$. 

The following lemma states concisely the basic result on the existence of a global attractor for a semiflow and provides the connection of the $\gk$-contracting concept to the asymptotical compactness, cf. \cite[Chapter 2]{SY02}.

\begin{lemma} \label{L:kpac}
Let $\csg$ be a semiflow on a Banach space $\mX$. If the following conditions are satisfied\textup{:} 

\textup{(i)} $\csg$ has a bounded absorbing set in $\mX$, and 

\textup{(ii)} $\csg$ is $\gk$-contracting, i.e. $\lim_{t \to \infty} \gk (S(t)B) = 0$ for any bounded set $B \subset \mX$, \\
then $\csg$ is asymptotically compact and there exists a global attractor $\ms{A}$ in $\mX$ for this semiflow. The global attractor is given by
$$
    \ms{A} = \om (B_0) \overset{\textup{def}}{=} \bigcap_{\tau \geq 0} \text{Cl}_{\mX}  \bigcup_{t \geq \tau} (S(t)B_0).
$$.
\end{lemma}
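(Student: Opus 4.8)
The plan is to first establish asymptotic compactness and then to realize the global attractor as $\ms{A} = \om(B_0)$, where $B_0$ is the given bounded absorbing set; I will use only the listed properties (i)--(iii) of $\gk$ together with the completeness of $\mX$. A convenient preliminary normalization is to make $B_0$ positively invariant. Since $B_0$ is bounded it is absorbed by itself, so there is $t_0 \geq 0$ with $S(t)B_0 \subset B_0$ for all $t \geq t_0$. Replacing $B_0$ by $\mathrm{Cl}_{\mX}\bigcup_{t \geq t_0} S(t)B_0$ (which is bounded, absorbing, and satisfies $S(s)B_0 \subset B_0$ for every $s \geq 0$, by continuity of each $S(s)$ and the semigroup property) I may assume $S(t)B_0 \subset B_0$ for all $t \geq 0$. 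The gain is that for $t \geq r$ the semigroup identity gives $S(t)B_0 = S(r)S(t-r)B_0 \subset S(r)B_0$, whence the tail union collapses, $\bigcup_{t \geq r} S(t)B_0 = S(r)B_0$, and therefore $\gk\big(\bigcup_{t\geq r} S(t)B_0\big) = \gk(S(r)B_0) \to 0$ as $r \to \infty$ by the $\gk$-contracting hypothesis (ii).

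With this in hand, the candidate attractor is $\ms{A} = \om(B_0) = \bigcap_{r \geq 0} C_r$, where $C_r = \mathrm{Cl}_{\mX}\, S(r)B_0$ is a decreasing family of nonempty closed sets with $\gk(C_r) = \gk(S(r)B_0) \to 0$. To see that $\ms{A}$ is nonempty and compact I pick $y_r \in C_r$ and note that, for every $\ve > 0$, choosing $R$ with $\gk(C_R) < \ve$ forces $\gk(\{y_r : r \geq R\}) < \ve$ by monotonicity (ii), while adjoining the finite set $\{y_r : r < R\}$ leaves $\gk$ unchanged by subadditivity (iii); hence $\gk(\{y_r\}_{r}) = 0$, so by (i) the sequence $\{y_r\}$ is precompact and has a convergent subsequence whose limit lies in every (closed, nested) $C_r$, giving $\ms{A} \neq \emptyset$. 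Since $\ms{A} \subset C_r$ yields $\gk(\ms{A}) \leq \gk(C_r) \to 0$, property (i) makes the closed set $\ms{A}$ compact.

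Next I verify asymptotic compactness. Let $\{x_n\}$ be bounded and $t_n \to \infty$, and put $B = \{x_n : n \geq 1\}$. Because $B_0$ absorbs $B$, there is $t_*$ with $S(t)B \subset B_0$ for $t > t_*$; fixing any $r \geq 0$, the identity $S(t_n)x_n = S(r)\big(S(t_n - r)x_n\big)$ together with $S(t_n - r)x_n \in B_0$ (valid once $t_n - r > t_*$) shows that the tail $\{S(t_n)x_n : n \geq N(r)\}$ lies in $S(r)B_0$. Given $\ve > 0$, I first choose $r$ so that $\gk(S(r)B_0) < \ve$, then cover this tail by finitely many sets of diameter $< \ve$; by the pigeonhole principle one such set contains $S(t_n)x_n$ for infinitely many $n$. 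Running this with $\ve = 1/m$ and diagonalizing produces a subsequence that is Cauchy, and completeness of $\mX$ gives its convergence. This is precisely the asymptotic compactness of $\csg$, and writing the convergent subsequence as $S(t_{n_k})x_{n_k} = S(t_{n_k}-t_*-1)\big(S(t_*+1)x_{n_k}\big)$ with $S(t_*+1)x_{n_k} \in B_0$ shows its limit belongs to $\ms{A} = \om(B_0)$.

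It remains to check invariance and attraction, each a short consequence of the above. For invariance I use the sequential description that $y \in \ms{A}$ iff $y = \lim_k S(\tau_k)\eta_k$ for some $\eta_k \in B_0$ and $\tau_k \to \infty$. Continuity of $S(t)$ and $S(t)S(\tau) = S(t+\tau)$ give $S(t)\ms{A} \subset \ms{A}$ directly; for the reverse inclusion I apply asymptotic compactness to $S(\tau_k - t)\eta_k$ to extract a limit $z \in \ms{A}$ with $S(t)z = y$, so $\ms{A} \subset S(t)\ms{A}$ and hence $S(t)\ms{A} = \ms{A}$. For attraction of an arbitrary bounded $B$ I argue by contradiction: if $\mathrm{dist}(S(t_n)x_n, \ms{A}) \geq \ve$ for some $x_n \in B$ and $t_n \to \infty$, then asymptotic compactness yields a convergent subsequence whose limit, after rewriting $S(t_{n_k})x_{n_k}$ through $B_0$ as above, must lie in $\ms{A}$, a contradiction. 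The hardest point throughout is the asymptotic-compactness step: the hypothesis only controls $\gk(S(t)B)$ at individual times, so to handle the genuinely moving sequence $S(t_n)x_n$ one must use the semigroup identity and the positive invariance of $B_0$ to freeze the elapsed time at a fixed large $r$ while confining the varying data to the fixed set $B_0$, after which completeness of $\mX$ supplies the limit.
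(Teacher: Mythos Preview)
The paper does not actually prove this lemma: it is stated as a background fact with a reference to \cite[Chapter 2]{SY02} and used thereafter without argument. Your proposal therefore goes beyond what the paper provides by giving a complete, self-contained proof from the three listed properties of $\gk$ and the completeness of $\mX$. The argument you give is correct in substance and follows the standard route: pass to a positively invariant absorbing set, use $\gk$-contraction to make the nested closures $C_r$ a Cantor-type intersection with vanishing noncompactness, extract limits via precompactness, and then verify invariance and attraction from asymptotic compactness.

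Two minor points are worth tightening. First, the property (iii) as stated in the paper is for the \emph{Minkowski sum} $B_1+B_2$, not for the union; your claim that adjoining a finite set leaves $\gk$ unchanged is true, but it follows directly from the definition of $\gk$ (add singletons to any finite cover) rather than from (iii). Second, once you replace $B_0$ by its positively invariant enlargement, the displayed formula $\ms{A}=\om(B_0)$ is established for the new set; a one-line remark that the global attractor equals $\om(B)$ for \emph{every} bounded absorbing set $B$ (since $\ms{A}$ is invariant and absorbed into $B$, hence $\ms{A}\subset\om(B)$, while attraction gives $\om(B)\subset\ms{A}$) recovers the identity for the original $B_0$ as well.
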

In Section 2 we shall prove the global existence of the strong solutions of the two-cell Brusselator evolutionary equation \eqref{eveq} and the absorbing property of this coupled Brusselator semiflow. In Section 3 a new decomposition technique is presented to deal with the asymptotic compactness issue of this problem. This approach is taken to show the $\gk$-contracting property for the $(v, z)$ components in Section 4 and for the $(u, w)$ components in Section 5, respectively. In Section 6 we assemble these results to prove the existence of a global attractor in the phase space $H$ for the coupled Busselator semiflow and to show that the global attractor has a finite Hausdorff dimension and a finite fractal dimensions. As a remark, with some adjustment in proof, these results are also valid for the homogeneous Neumann boundary condition. Furthermore, corresponding results are valid for the coupled two-cell Gray-Scott equations, Selkov equations, and Schnackenberg equations.

\section{\textbf{Absorbing Property}}

In this paper, we shall write $u(t, x), v(t, x), w(t, x)$, and $z(t,x)$ simply as $u(t), v(t), w(t)$, and $z(t)$, or even as $u, v, w$, and $z$, and similarly for other functions of $(t,x)$. 

\begin{lemma} \label{L:glsn}
For any initial data $g_0 =(u_0, v_0, w_0, z_0) \in H$, there exists a unique, global, strong solution $g(t) = (u(t), v(t), w(t), z(t)), \, t \in [0, \infty)$, of the coupled Brusselator evolutionary equation \eqref{eveq}.
\end{lemma}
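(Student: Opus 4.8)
The plan is to establish global existence by the standard route: local existence from the semigroup theory already set up, followed by a priori estimates that prevent finite-time blow-up in $H$. Since the excerpt states that the nonlinearity $F$ is locally Lipschitz from $E$ to $H$ and $A$ generates an analytic $C_0$-semigroup on $H$, we already have a unique local strong solution $g(t)=(u,v,w,z)$ on a maximal interval $[0,T_{\max})$ with the regularity \eqref{soln}. The goal is to show $T_{\max}=\infty$, for which it suffices (by the usual continuation criterion for strong solutions) to derive an a priori bound on $\|g(t)\|$ that remains finite on every bounded time interval; in fact we will get a uniform-in-time bound, which also serves the absorbing-set statement in the next lemmas.

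The key difficulty, flagged in the introduction, is that the vector field does not satisfy a dissipative sign condition: the nonlinear terms $u^2v$ and $-u^2v$ (and similarly $w^2z$, $-w^2z$) are opposite-signed and coupled, so testing \eqref{eveq} directly against $g$ leaves the indefinite cubic cross terms $\langle u^2v,u\rangle$ and $\langle w^2z,w\rangle$ uncontrolled, and the linear coupling terms $D_1(w-u)$ etc. mix the two cells. The first main step is therefore a \emph{grouping estimation}: add \eqref{equ} and \eqref{eqv}, and separately \eqref{eqw} and \eqref{eqz}, so that the cubic terms cancel pairwise. Writing $p=u+v$ and $q=w+z$, the sums satisfy linear-plus-bounded-source parabolic equations, in which the surviving reaction terms are $a+bu-(b+1)u - u = a - u$ wait — more carefully $a -(b+1)u + bu = a-u$ from the $u$-part, together with the coupling $D_1(w-u)+D_2(z-v)$; the point is that after grouping, the right-hand side is affine in $(u,v,w,z)$ with no cubic terms. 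Testing the $p$-equation against $p$ and the $q$-equation against $q$, summing, using the Poincar\'e inequality \eqref{pcr} to absorb the diffusion, and handling the linear terms by Young's inequality (the coupling terms $D_i(q-p)$-type contributions are controlled since $D_i\langle \cdot,\cdot\rangle$ is bounded by $\|p\|^2+\|q\|^2$), we obtain a differential inequality $\frac{d}{dt}(\|p\|^2+\|q\|^2) + c_1(\|p\|^2+\|q\|^2) \le c_2$, hence a uniform bound on $\|u+v\|$ and $\|w+z\|$ for all $t\ge 0$, plus an $L^2(0,T;H^1_0)$ bound on $p,q$.

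The second step is to recover a bound on the individual components, in particular $v$ and $z$ (the $u,w$ equations are better behaved because $-(b+1)u$ is dissipative, but they still contain the bad $+u^2v$ term). Here I would exploit the structure: from the $v$-equation, $\frac{d}{dt}\|v\|^2 + 2d_2\|\nabla v\|^2 = 2\langle bu - u^2 v + D_2(z-v),v\rangle$; the term $-2\langle u^2v,v\rangle = -2\|uv\|^2 \le 0$ is favorably signed, so it can be dropped, leaving $\frac{d}{dt}\|v\|^2 + 2d_2\gamma\|v\|^2 \le 2b\langle u,v\rangle + 2D_2\langle z-v,v\rangle \le c_3(\|u\|^2+\|z\|^2) + \tfrac12 d_2\gamma\|v\|^2$ type bounds, and symmetrically for $z$. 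Now $\|u\| \le \|p\| + \|v\| = \|u+v\|+\|v\|$ and $\|w\|\le\|q\|+\|z\|$, so substituting the already-established bounds on $\|p\|,\|q\|$ turns the coupled system of differential inequalities for $(\|u\|^2,\|v\|^2,\|w\|^2,\|z\|^2)$ into a closed linear Gr\"onwall-type system with bounded forcing; its solution is bounded on $[0,\infty)$. The main obstacle — and where care is needed — is precisely ensuring that after dropping the nonnegative cubic terms one is left with a genuinely closed, dissipative linear system rather than something that still feeds back through an uncontrolled quantity; the grouping trick is what closes this loop. Once $\|g(t)\|$ is bounded uniformly on $[0,T_{\max})$, the blow-up alternative forces $T_{\max}=\infty$, and uniqueness and continuous dependence are already part of the local theory, completing the proof.
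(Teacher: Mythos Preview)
Your grouping $p=u+v$, $q=w+z$ does cancel the cubic nonlinearity, but the resulting equation is not a clean parabolic equation for $p$: its diffusion part is $d_1\Delta u + d_2\Delta v$, not $d\,\Delta p$. Testing against $p=u+v$ gives
\[
\langle d_1\Delta u + d_2\Delta v,\,u+v\rangle = -\bigl[d_1\|\nabla u\|^2 +(d_1+d_2)\langle\nabla u,\nabla v\rangle + d_2\|\nabla v\|^2\bigr],
\]
and the quadratic form in brackets has discriminant $(d_1+d_2)^2-4d_1d_2=(d_1-d_2)^2$, so it is indefinite whenever $d_1\ne d_2$. Thus ``absorbing the diffusion via Poincar\'e'' fails, and the inequality $\frac{d}{dt}(\|p\|^2+\|q\|^2)+c_1(\|p\|^2+\|q\|^2)\le c_2$ cannot be obtained at this stage.

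The paper repairs this by reversing your two steps and sharpening the $(v,z)$ estimate so that it becomes self-contained. Testing \eqref{eqv} against $v$ and \eqref{eqz} against $z$ and \emph{completing the square}, the reaction contribution becomes $-\int_\Omega\bigl[(uv-\tfrac{b}{2})^2+(wz-\tfrac{b}{2})^2+D_2(v-z)^2\bigr]\,dx+\tfrac12 b^2|\Omega|$, which is bounded above by a constant with no reference to $\|u\|$ or $\|w\|$ at all; your version instead drops $-\|uv\|^2$ and then needs $\|u\|$ to control $b\langle u,v\rangle$, which is precisely why you were forced to attempt the sums first. The completed-square estimate yields a uniform bound on $\|v\|^2+\|z\|^2$ and, crucially, on $\int_0^t(\|\nabla v\|^2+\|\nabla z\|^2)\,ds$. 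Only then does the paper pass to linear combinations in which the cubics cancel; it writes the diffusion as $d_1\Delta(\text{combination}) + (d_2-d_1)\Delta(\text{something in }v,z)$ and treats the second piece as a forcing term, now controllable by the gradient integral just obtained. The paper actually uses $y=u+v+w+z$ and $\psi=u+v-w-z$ rather than your per-cell sums, which makes the $D_1,D_2$ coupling terms simplify further; but once the order is corrected and the diffusion mismatch is handled this way, your $p,q$ variant would also close.
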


\begin{proof}
Taking the inner products $\inpt{\eqref{eqv}, v(t)}$ and $\inpt{\eqref{eqz}, z(t)}$ and summing them up, we get
\begin{equation} \label{vziq}
	\begin{split}
    \frac{1}{2} &\left(\frac{d}{dt} \| v \|^2 + \frac{d}{dt} \| z \|^2 \right) + d_2 \left(\| \nabla v \|^2 + \| \nabla v \|^2\right) \\
    & = \int_{\gw} \left(-u^2 v^2 + buv - w^2 z^2 + bwz - D_2 [v^2 - 2vz + z^2] \right) \, dx   \\
    & = \int_{\gw} - \left[\left(uv - \frac{b}{2}\right)^2 + \left(wz - \frac{b}{2}\right)^2 + D_2 (v - z)^2 \right]\, dx +  \frac{1}{2} b^2 |\gw |\leq \, \frac{1}{2} b^2 |\gw |. 
  \end{split}
\end{equation}
It follows that
\begin{equation*}
    \frac{d}{dt} \left(\| v \|^2 + \| z \|^2 \right) + 2\ga d_2 \left(\| v \|^2 + \| z \|^2 \right) \leq b^2 |\gw |,
\end{equation*}
which yields
\begin{equation} \label{vz}
    \| v(t) \|^2 + \| z (t)\|^2 \leq e^{- 2\ga d_2 t} \left(\| v_0 \|^2 + \| z_0 \|^2 \right) + \frac{b^2 |\gw |}{2\ga d_2}, \quad \text{for} \: t \in [0, T_{max}).
\end{equation}

Let $y(t, x) = u(t, x) + v(t, x) + w(t,x) + z(t,x)$. In order to treat the $u$-component and the $w$-component, first we add up \eqref{equ}, \eqref{eqv}, \eqref{eqw} and \eqref{eqz} altogether to get the following equation satisfied by $y(t) = y(t, x)$, 
\begin{equation} \label{eqy}
    \frac{\pdr y}{\pdr t} = d_1 \gd y - y + \left[(d_2 - d_1)\gd (v + z) + (v + z) + 2a\right].
\end{equation}
Taking the inner-product $\inpt{\eqref{eqy},y(t)}$ we obtain

\begin{align*}
    \frac{1}{2}& \frac{d}{dt} \| y \|^2 + d_1 \| \nabla y \|^2 + \| y \|^2= \int_{\gw} \, \left[(d_2 - d_1)\gd (v+z) + (v+ z) + 2a\right]y \, dx \\[3pt]
    & \leq | d_1 - d_2 | \| \nabla (v + z) \| \| \nabla y \| + \| v + z \| \| y \| + 2a | \gw |^{1/2} \| y \|\\[3pt]
    & \leq \frac{d_1}{2} \| \nabla y \|^2 + \frac{|d_1 - d_2 |^2}{2d_1} \| \nabla (v + z) \|^2 + \frac{1}{2} \| y \|^2 + \| v + z \|^2 +  4a^2 | \gw |,
\end{align*}
so that
\begin{align*}
    \frac{d}{dt} \| y \|^2 + d_1 \| \nabla y \|^2 + \| y \|^2 \leq  \frac{| d_1 - d_2 |^2}{d_1} \| \nabla (v+z) \|^2 + 4\left(\| v\|^2 + \|z \|^2\right) + 8a^2 | \gw |.
\end{align*}
Then we get
\begin{equation} \label{yiq}
    \frac{d}{dt} \| y \|^2 + d_1 \| \nabla y \|^2 + \| y \|^2 \leq \frac{| d_1 - d_2 |^2}{d_1} \| \nabla (v+z) \|^2 + C_1(v_0, z_0, t),
\end{equation}
where
\begin{equation} \label{c1}
    C_1(v_0, z_0, t) = 4 e^{- 2\ga d_2 t} \left(\| v_0 \|^2 + \| z_0 \|^2\right) + \left(\frac{4b^2}{\ga d_2} + 8a^2 \right) |\gw |.
\end{equation}
Integrate the inequality \eqref{yiq} to see that the strong solution $y(t)$ of \eqref{eqy} satisfies the following estimate,
\begin{equation} \label{yy}
	\begin{split}
    \| y(t) \|^2 &\leq \| u_0 + v_0 + w_0 + z_0\|^2 + \frac{| d_1 - d_2 |^2}{d_1} \int_{0}^{t} \| \nabla (v(s)+z(s)) \|^2 \, ds  \\
    {} &+ \frac{2}{\ga d_2} \left(\| v_0 \|^2 + \| v_0 \|^2\right) + \left(\frac{4b^2}{\ga d_2 } + 8a^2 \right) |\gw | \, t, \quad t \in [0, T_{max}).
   \end{split}
\end{equation}
From \eqref{vziq} we have
$$
    d_2 \int_0^{t} \| \nabla (v(s) + z(s)) \|^2 \, ds \leq 2d_2 \int_0^{t} \left(\|\nabla v(s) \|^2 + \|\nabla z(s) \|^2\right)\, ds \leq \left(\| v_0 \|^2 + \|z_0 \|^2\right) + b^2 | \gw | t. 
$$
Substitute this into \eqref{yy} to obtain
\begin{equation} \label{yy1}
	\begin{split}
    \| y(t) \|^2 & \leq \| u_0 + v_0 + w_0 + z_0\|^2 + \left(\frac{| d_1 - d_2 |^2}{d_1 \, d_2} + \frac{2}{\ga d_2} \right) \left( \| v_0 \|^2 + \|z_0 \|^2\right)\\[3pt]
    {} &+ \left[ \left(\frac{| d_1 - d_2 |^2}{d_1 \, d_2} +  \frac{4}{\ga d_2 } \right) b^2+ 8a^2 \right] |\gw | \,t, \quad t \in [0, T_{max}). 
  \end{split}
\end{equation}
Let $p(t) = u(t) + w(t)$. Then by \eqref{vz} and \eqref{yy1} we have shown that
\begin{equation} \label{pp}
	\begin{split}
    \| p(t) \|^2 &= \|u(t) + w(t) \|^2 = \|y(t) - (v(t) + z(t))\|^2 \\
    & \leq 2\left(\|u_0 +v_0+w_0+z_0\|^2 + \|v_0\|^2+\|z_0\|^2\right) + C_2 (g_0) \, t, \quad \textup{for} \; t \in [0, T_{max}),
   \end{split}
\end{equation}
where $C_2 (g_0)$ is a constant depending on the initial data $g_0$.

On the other hand, let $\psi (t,x) = u(t, x) + v(t, x) - w(t,x) - z(t,x)$, which satisfies the equation
\begin{equation} \label{eqps}
    \frac{\pdr \psi}{\pdr t} = d_1 \gd \psi - (1 + 2D_1) \psi + \left[(d_2 - d_1)\gd (v - z) + (1 + 2(D_1 - D_2))(v - z)\right].
\end{equation}
Taking the inner-product $\inpt{\eqref{eqps},\psi(t)}$ we obtain
\begin{align*}
	\frac{1}{2} & \frac{d}{dt} \|\psi \|^2 + d_1 \|\nabla \psi \|^2 + \|\psi \|^2 \leq \frac{1}{2} \frac{d}{dt} \|\psi \|^2 + d_1 \|\nabla \psi \|^2 + (1 + 2D_1) \|\psi \|^2 \\[5pt]
	& \leq (d_1 - d_2) \|\nabla (v - z)\| \|\nabla \psi \| + |1 + 2(D_1 - D_2)| \|v - z\| \|\psi \| \\[3pt]
	& \leq \frac{d_1}{2} \|\nabla \psi\|^2 + \frac{|d_1 - d_2|^2}{2d_1} \|\nabla (v-z)\|^2 + \frac{1}{2} \|\psi\|^2 + \frac{1}{2} |1 + 2(D_1 - D_2)|^2 \|v - z\|^2,
\end{align*}
so that 
\begin{equation} \label{psiq}
	\frac{d}{dt} \|\psi \|^2 + d_1 \|\nabla \psi \|^2 + \|\psi \|^2 \leq \frac{|d_1 - d_2|^2}{d_1} \|\nabla (v-z)\|^2 + C_3 (v_0, z_0, t), 
\end{equation}
where
\begin{equation} \label{c3}
    C_3(v_0, z_0, t) = 2 |1 + 2(D_1 - D_2)|^2 \left( e^{- 2\ga d_2 t} \left(\| v_0 \|^2 + \| z_0 \|^2\right) + \frac{b^2}{2\ga d_2} |\gw|\right).
\end{equation}
Integration of \eqref{psiq} yields
\begin{equation} \label{ppsi}
	\begin{split}
	\|\psi\|^2 & \leq \|u_0+v_0 - w_0 - z_0\|^2 + \frac{|d_1 - d_2|^2}{d_1} \int_0^t \|\nabla (v(s) - z(s) )\|^2\, ds \\
	& + |1 + 2(D_1 - D_2)|^2 \left(\frac{1}{\ga d_2} (\|v_0\|^2 + \|z_0\|^2) + \frac{b^2 |\gw|}{\ga d_2}\, t \right), \quad t \in [0, T_{max}).
	\end{split}
\end{equation}
Note that
$$
    d_2 \int_0^{t} \| \nabla (v(s) - z(s)) \|^2 \, ds \leq 2d_2 \int_0^{t} \left(\|\nabla v(s) \|^2 + \|\nabla z(s) \|^2\right)\, ds \leq \left(\| v_0 \|^2 + \|z_0 \|^2\right) + b^2 | \gw | t. 
$$
From \eqref{ppsi} it follows that
\begin{equation} \label{ppsi1}
	\begin{split}
	\|\psi\|^2 & \leq \| u_0 + v_0 - w_0 - z_0\|^2 + \frac{| d_1 - d_2 |^2}{d_1 \, d_2}  \left( \| v_0 \|^2 + \|z_0 \|^2 + b^2 |\gw|\, t \right) \\[3pt]
    	{} &+  |1 + 2(D_1 - D_2)|^2 \left(\frac{1}{\ga d_2} (\|v_0\|^2 + \|z_0\|^2) + \frac{b^2 |\gw|}{\ga d_2}\, t \right), \quad t \in [0, T_{max}). 
	\end{split}
\end{equation}
Let $q(t) = u(t) - w(t)$. Then by \eqref{vz} and \eqref{ppsi1} we find that
\begin{equation} \label{qq}
	\begin{split}
   	 \| q(t) \|^2 &= \|u(t) - w(t) \|^2 = \|\psi (t) - (v(t) - z(t))\|^2 \\
    	& \leq 2\left(\|u_0 +v_0-w_0-z_0\|^2 + \|v_0\|^2+\|z_0\|^2\right) + C_4 (g_0) \, t, \quad \textup{for} \; t \in [0, T_{max}),
   	\end{split}
\end{equation}
where $C_4 (g_0)$ is a constant depending on the initial data $g_0$.

Finally combining \eqref{pp} and \eqref{qq} we can conclude that for each initial data $g_0 \in H$, both $u(t) = (1/2) (p(t) + q(t))$ and $w(t) = (1/2)(p(t) - q(t))$ components are bounded if $T_{max}$ of the maximal interval of existence of the solution is finite. Together with \eqref{vz}, this shows that, for each $g_0 \in H$, the strong solution $g(t) = (u(t), v(t), w(t), z(t))$ of the equation \eqref{eveq} will never blow up in $H$ at any finite time and it exists globally . 
\end{proof}
Due to Lemma \ref{L:glsn}, the family of all the global strong solutions $\{g(t; g_0), t \geq 0, g_0 \in H \}$ defines a semiflow on $H$,
$$
	S(t): g_0 \mapsto g(t; w_0),  \quad g_0 \in H, \; t \geq 0,
$$
which is called the \textbf{coupled Brusselator semiflow} generated by the coupled Brusselator evolutionary equations.

\begin{lemma} \label{L:absb}
There exists a constant $K_0 > 0$, such that the set 
\begin{equation} \label{bk}
    	B_0 = \left\{ \| g \| \in H : \| g \|^2 \leq K_0 \right\}
\end{equation}
is a bounded absorbing set $B_0$ in $H$ for the coupled Brusselator semiflow $\{S(t)\}_{ t \geq 0}$.
\end{lemma}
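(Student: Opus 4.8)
The plan is to upgrade the a priori estimates from the proof of Lemma \ref{L:glsn} from "no finite-time blow up" to "uniform ultimate bound", by replacing the crude time-linear growth bounds with genuine dissipative (exponentially decaying) estimates in the three auxiliary variables $y = u+v+w+z$, $\psi = u+v-w-z$, and the already-dissipative pair $(v,z)$. The key observation is that the linear coefficient in front of $\|y\|^2$ in \eqref{yiq} and in front of $\|\psi\|^2$ in \eqref{psiq} is strictly positive, so Gronwall gives decay; the only non-dissipative looking term is the forcing $\frac{|d_1-d_2|^2}{d_1}\|\nabla(v\pm z)\|^2$, which must be controlled not by its time integral (as was done for local existence) but in an averaged/uniform-in-time sense.

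First I would record the uniform absorbing estimate for $(v,z)$: from \eqref{vz}, $\limsup_{t\to\infty}(\|v(t)\|^2+\|z(t)\|^2) \leq \frac{b^2|\gw|}{2\ga d_2}$, so there is $t_1(B)$ after which $\|v(t)\|^2+\|z(t)\|^2 \leq 1 + \frac{b^2|\gw|}{2\ga d_2} =: R_1$. Next, integrating \eqref{vziq} over $[t,t+1]$ and using this bound, I get a uniform control $\int_t^{t+1}(\|\nabla v\|^2+\|\nabla z\|^2)\,ds \leq R_2$ for all $t \geq t_1$, with $R_2$ independent of $g_0$. Then I would apply the uniform Gronwall lemma (or a direct Gronwall argument on $\|y\|^2$ using \eqref{yiq}, splitting the forcing term's contribution as $e^{-(t-s)}$ against the gradient integral, which is finite on each unit window): since $\frac{d}{dt}\|y\|^2 + \|y\|^2 \leq \frac{|d_1-d_2|^2}{d_1}\|\nabla(v+z)\|^2 + C_1(v_0,z_0,t)$ and $C_1 \to \frac{(4b^2/(\ga d_2)+8a^2)|\gw|}{}$ as $t\to\infty$, I obtain $\limsup_{t\to\infty}\|y(t)\|^2 \leq R_3$ for an explicit $R_3$ depending only on the parameters. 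The same argument applied to \eqref{psiq}--\eqref{c3}, with the extra good sign from $(1+2D_1)$, yields $\limsup_{t\to\infty}\|\psi(t)\|^2 \leq R_4$, again uniform in $g_0$.

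Finally I would reconstruct the original components: $u = \frac14(y+\psi) + \tfrac14\big((u+w)-(u+w)\big)$ — more simply, $p = u+w = y - (v+z)$ and $q = u - w = \psi - (v-z)$, so $\|p(t)\|^2 \leq 2\|y(t)\|^2 + 2(\|v(t)\|^2+\|z(t)\|^2)$ and likewise for $q$; hence $\|u\|^2 + \|w\|^2 = \tfrac12(\|p\|^2+\|q\|^2)$ is ultimately bounded by an explicit constant. Combining with the $(v,z)$ bound gives $\limsup_{t\to\infty}\|g(t)\|^2 \leq K_0$ for a constant $K_0$ depending only on $d_1,d_2,a,b,D_1,D_2,|\gw|,\ga$. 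Then for any bounded $B \subset H$ there is a finite $t_0(B)$ (determined by the decay rates $e^{-2\ga d_2 t}$ and $e^{-t}$ acting on $\sup_{g_0\in B}\|g_0\|$) after which $S(t)B \subset B_0 = \{g\in H: \|g\|^2 \leq K_0\}$, which is the claim.

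The main obstacle I anticipate is handling the forcing term $\frac{|d_1-d_2|^2}{d_1}\|\nabla(v+z)\|^2$ in \eqref{yiq} (and its analogue in \eqref{psiq}): its time-integral grows linearly, so one cannot simply feed it into ordinary Gronwall and get a bound. The resolution is the uniform Gronwall lemma, which needs exactly the two ingredients available here — a uniform-in-$t$ bound on $\int_t^{t+1}\|\nabla(v+z)\|^2\,ds$ and the strictly positive linear damping coefficient — so the estimate closes. One should also be slightly careful that the constants $C_1, C_3$ contain the transient terms $e^{-2\ga d_2 t}(\|v_0\|^2+\|z_0\|^2)$, but these vanish as $t\to\infty$ and for a bounded set $B$ are dominated uniformly, so they only affect $t_0(B)$, not $K_0$.
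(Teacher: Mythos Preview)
Your proposal is correct and follows essentially the same route as the paper: the same auxiliary sums $y=u+v+w+z$ and $\psi=u+v-w-z$, the same differential inequalities \eqref{yiq} and \eqref{psiq}, and the same reconstruction of $u,w$ via $p=u+w=y-(v+z)$ and $q=u-w=\psi-(v-z)$. The only difference is cosmetic: where you invoke the uniform Gronwall lemma (via the unit-window bound on $\int_t^{t+1}(\|\nabla v\|^2+\|\nabla z\|^2)\,ds$, which the paper also records as \eqref{vztt}) to absorb the gradient forcing, the paper instead multiplies \eqref{vziq} by $e^\tau$, integrates by parts to bound $\int_0^t e^\tau(\|\nabla v\|^2+\|\nabla z\|^2)\,d\tau$ directly (inequality \eqref{evz}), and substitutes this into the variation-of-constants formula for $\|y(t)\|^2$ and $\|\psi(t)\|^2$.
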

\begin{proof}
For this coupled Brusselator semiflow, from \eqref{vz} we obtain
\begin{equation} \label{vzsup}
    	\limsup_{t \to \infty} \, (\| v(t) \|^2 + \|z(t)\|^2) < R_0 = \frac{b^2 |\gw |}{\ga d_2}.
\end{equation}
Moreover, for any $t \geq 0$, \eqref{vziq} also implies that
\begin{equation} \label{vztt}
	\begin{split}
    	\int_{t}^{t+1} (\| \nabla v(s) \|^2 + \|\nabla z(s)\|^2) \, ds  &\leq \frac{1}{d_2} (\| v(t) \|^2 + \|z(t)\|^2 + b^2 |\gw |) \\[3pt]
    	&\leq \frac{1}{d_2} \left( e^{- 2\ga d_2 t} (\| v_0 \|^2 + \|z_0\|^2) + \frac{b^2 |\gw |}{2\ga d_2}\right)
    	+ \frac{b^2 |\gw |}{d_2}. 
	\end{split}
\end{equation}
which is for later use.

From \eqref{yiq} we can deduce that
\begin{equation} \label{eyiq}
    \frac{d}{dt} \left( e^t \| y(t) \|^2 \right) \leq \frac{| d_1 - d_2 |^2}{d_1} \, e^t \| \nabla (v(t) + z(t)) \|^2 + e^t C_1(v_0, z_0, t).
\end{equation}
Integrate \eqref{eyiq} to obtain
\begin{equation} \label{ey}
	\begin{split}
    \| y(t) \|^2 &\leq e^{-t} \| u_0 + v_0 +w_0+z_0 \|^2 \\
    &+ \frac{| d_1 - d_2 |^2}{d_1} \, \int_{0}^{t} e^{- (t -\tau)} \| \nabla (v(\tau) + z(\tau))\|^2 \, d\tau + C_5 (v_0, z_0, t),
    	\end{split}
\end{equation}
where
\begin{align*} 
    C_5 (v_0, z_0, t) &= e^{-t}  \int_{0}^{t}  4e^{(1 - 2\ga d_2)\tau}\, d\tau \, (\| v_0 \|^2 + \|z_0\|^2) + \left( \frac{4b^2}{\ga d_2} + 8a^2 \right) |\gw |  \\
    &\leq 4\alpha (t) (\| v_0 \|^2 + \|z_0\|^2) + \left( \frac{4b^2}{\ga d_2} + 8a^2 \right) |\gw |,
\end{align*}
in which
\begin{equation} \label{apht}
    \alpha (t) = e^{-t}  \int_{0}^{t} e^{(1 - 2\ga d_2)\tau}\, d\tau = 
    \begin{cases}
        \frac{1}{| 1 - 2\ga d_2 |} e^{- 2\ga d_2 t},  & \text{if  $1 - 2\ga d_2 > 0$;} \\[7pt]
        t e^{-t} \leq 2e^{-1} e^{-t/2}, & \text{if $ 1 - 2\ga d_2 = 0$;} \\[7pt]
        \frac{1}{| 1 - 2\ga d_2 | } e^{-t},  &\text{if  $1 - 2\ga d_2 < 0$.}
    \end{cases}
\end{equation}
On the other hand, multiplying \eqref{vziq} by $e^t$ and then integrating each term of the resulting inequality, we get
$$
    \frac{1}{2} \int_{0}^{t} \, e^{\tau} \frac{d}{d\tau} \left(\| v(\tau) \|^2 + \|z(\tau)\|^2 \right) \, d\tau + d_2 \int_{0}^{t} \, e^{\tau} (\| \nabla v(\tau) \|^2 + \|z(\tau)\|^2) \, d\tau \leq \frac{1}{2} b^2 | \gw | e^t,
$$
so that, by integration by parts and using \eqref{vz}, we obtain
\begin{equation} \label{evz}
	\begin{split}
    d_2 &\int_{0}^{t} \, e^{\tau} (\| \nabla v(\tau) \|^2 + \| \nabla v(\tau) \|^2 )\, d\tau  \leq \frac{1}{2} b^2 | \gw | e^t - \frac{1}{2} \int_{0}^{t} \, e^{\tau} \frac{d}{d\tau} \left(\| v(\tau) \|^2 + \| \nabla z(\tau) \|^2 \right) \, d\tau   \\[7pt]
    & = \frac{1}{2} b^2 | \gw | e^t - \frac{1}{2} \left[ e^t (\| v(t) \|^2 + \| z(t) \|^2 ) - (\| v_0 \|^2 + \|z_0\|^2) - \int_{0}^{t} \, e^{\tau} (\| v(\tau) \|^2 + \|z(\tau)\|^2) \, d\tau \right]  \\[7pt]
    & \leq b^2 | \gw | e^t + (\| v_0 \|^2 + \|z_0\|^2) + \int_{0}^{t} \, e^{(1 - 2\ga d_2)\tau} (\| v_0 \|^2 + \|z_0\|^2)\, d\tau + \frac{b^2 |\gw |}{2\ga d_2} e^t   \\[7pt]
    & \leq \left(1 + \frac{1}{2\ga d_2} \right) b^2 |\gw | e^t + \left( 1 + \alpha (t) e^t \right) (\| v_0 \|^2 + \|z_0\|^2), \quad \text{for} \; t \geq 0. 
    	\end{split}
\end{equation}
Substituting \eqref{evz} into \eqref{ey}, we obtain that, for $t \geq 0$,
\begin{equation} \label{yyf}
	\begin{split}
    \| y(t) \|^2 & \leq e^{-t}\| u_0 + v_0 +w_0+z_0\|^2 + C_5 (v_0, z_0, t)   \\[5pt]
     &+ \frac{2| d_1 - d_2 |^2}{d_1\, d_2} e^{-t} \left[ \left(1 + \frac{1}{2\ga d_2} \right) b^2 |\gw | e^t + \left( 1 + e^t \alpha (t) \right) (\| v_0 \|^2+ \|z_0\|^2)\right]   \\
     & \leq e^{-t}\| u_0 + v_0 +w_0 +z_0\|^2 +  4\alpha (t) (\| v_0 \|^2 + \|z_0\|^2) + \left( \frac{4b^2}{\ga d_2} + 8a^2 \right) |\gw |  \\
     &+ \frac{2| d_1 - d_2 |^2}{d_1\, d_2} e^{-t} \left[ \left(1 + \frac{1}{2\ga d_2} \right) b^2 |\gw | e^t + \left( 1 + e^t \alpha (t) \right) (\| v_0 \|^2 + \|z_0\|^2) \right]. 
     	\end{split}
\end{equation}
Note that \eqref{apht} shows $\alpha (t) \rightarrow 0$, as $t \to 0$. From \eqref{yyf} we find that
\begin{equation} \label{ysup}
    \limsup_{t \to \infty} \|y(t) \|^2 < R_1 = 1 + \left( \frac{4b^2}{\ga d_2} + 8a^2 \right) |\gw | +  \frac{2| d_1 - d_2 |^2}{d_1\, d_2}  \left(1 + \frac{1}{2\ga d_2} \right) b^2 |\gw |.
\end{equation}
The combination of \eqref{vzsup} and \eqref{ysup} gives us
\begin{equation} \label{psup}
    \limsup_{t \to \infty} \| u(t) + w(t) \|^2 =  \limsup_{t \to \infty} \| y(t) - (v(t) + z(t))\|^2 < 4R_0 + 2R_1.
\end{equation}

Similarly, from the inequality \eqref{psiq} satisfied by $\psi (t) = u(t) + v(t) - w(t) -z(t)$, we get
\begin{equation} \label{epsiq}
	\frac{d}{dt} \left( e^t \| \psi(t) \|^2 \right) \leq \frac{| d_1 - d_2 |^2}{d_1} \, e^t \| \nabla (v(t) - z(t)) \|^2 + e^t C_3(v_0, z_0, t).
\end{equation}
Integrate \eqref{epsiq} to obtain
\begin{equation} \label{epsi}
	\begin{split}
    \| \psi(t) \|^2 &\leq e^{-t} \| u_0 + v_0 - w_0 - z_0 \|^2 \\
    &+ \frac{| d_1 - d_2 |^2}{d_1} \, \int_{0}^{t} e^{- (t -\tau)} \| \nabla (v(\tau) - z(\tau))\|^2 \, d\tau + C_6 (v_0, z_0, t),
    	\end{split}
\end{equation}
where
\begin{align*} 
    C_6 (v_0, z_0, t) &= 2 |1 + 2(D_1 - D_2)|^2 \left(e^{-t}  \int_{0}^{t} e^{(1 - 2\ga d_2)\tau}\, d\tau \, (\| v_0 \|^2 + \|z_0\|^2) +  \frac{b^2}{\ga d_2} |\gw| \right)   \\
    &\leq 2 |1 + 2(D_1 - D_2)|^2 \left(\alpha (t) (\| v_0 \|^2 + \|z_0\|^2) + \frac{b^2}{\ga d_2} |\gw |\right).
\end{align*}
Using \eqref{evz} to treat the integral term in \eqref{epsi}, we obtain that
\begin{equation} \label{ppsif}
	\begin{split}
    \| \psi(t) \|^2 & \leq e^{-t}\| u_0 + v_0 - w_0 - z_0\|^2 + C_6 (v_0, z_0, t)   \\[2pt]
     &+ \frac{2| d_1 - d_2 |^2}{d_1\, d_2} e^{-t} \int_0^t e^{\tau} (\| \nabla v(\tau) \|^2+ \|\nabla z(\tau)\|^2) \, d\tau  \\
     & \leq e^{-t}\| u_0 + v_0 - w_0 - z_0\|^2 +  2 |1 + 2(D_1 - D_2)|^2 \left(\alpha (t) (\| v_0 \|^2 + \|z_0\|^2) + \frac{b^2}{\ga d_2} |\gw |\right) \\
     &+ \frac{2| d_1 - d_2 |^2}{d_1\, d_2} e^{-t} \left[ \left(1 + \frac{1}{2\ga d_2} \right) b^2 |\gw | e^t + \left( 1 + e^t \alpha (t) \right) (\| v_0 \|^2 + \|z_0\|^2) \right], \quad \textup{for} \; t \geq 0. 
     	\end{split}
\end{equation}
Therefore, since $\alpha (t) \rightarrow 0$, as $t \to 0$, from \eqref{ppsif} we get
\begin{equation} \label{pssup}
    \limsup_{t \to \infty} \| \psi (t) \|^2 < R_2 = 1 + 2b^2 |\gw| \left[\frac{|1 + 2(D_1 - D_2)|^2}{\ga d_2} +  \frac{| d_1 - d_2 |^2}{d_1\, d_2}  \left(1 + \frac{1}{2\ga d_2}\right) \right].
\end{equation}
The combination of \eqref{vzsup} and \eqref{pssup} gives us
\begin{equation} \label{qsup}
    \limsup_{t \to \infty} \| u(t) - w(t) \|^2 =  \limsup_{t \to \infty} \| \psi (t) - (v(t) - z(t))\|^2 < 4R_0 + 2R_2.
\end{equation}

Finally, putting together \eqref{psup} and \eqref{qsup}, we assert that 
\begin{equation} \label{uwsup}
    \limsup_{t \to \infty} (\|u(t)\|^2 + \| w(t) \|^2 ) < 8R_0 + 2(R_1 + R_2).
\end{equation}
Then assembling \eqref{vzsup} and \eqref{uwsup}, we end up with 
\begin{equation*} 
    \limsup_{t \to \infty} \|g(t)\|^2 = \limsup_{t \to \infty} (\|u(t)\|^2 + \|v(t)\|^2 + \| w(t) \|^2 + \|z(t)\|^2 ) < 9R_0 + 2(R_1 + R_2).
\end{equation*}
Thus this lemma is proved with $K_0 =  9R_0 + 2(R_1 + R_2)$ in the absorbing ball $B_0$ in \eqref{bk}. And $K_0$ is a uniform positive constant independent of initial data.
\end{proof}

\section{\textbf{A New Decomposition for Asymptotic Compactness}}

\vspace{3pt}
The lack of inherent dissipativity and the cross-cell coupling make the attempt of showing the asymptotic compactness of the coupled Brusselator semiflow even more challenging. A generic and good idea in dealing with this issue is through a decomposition approach. The existed decomposition methods in different scenarios have been commented in Section 3 of \cite{yY07} and \cite{yY08}.

In \cite{yY07, yY08} the author proved the following lemma, which provides an effective decomposition approach and has been used in proving the $\gk$-contracting property linked to the existence of a global attractor for several cubic autocatalytic reaction-diffusion systems in \cite{yY07, yY08, yY09a, yY09b}.

\begin{lemma} \label{L:odc}
Let $\{\Phi (t)\}_{t \geq 0}$ be the solution semiflow generated by the Brusselator equations \eqref{bru}--\eqref{brv} on $\mH = L^2 (\gw) \times L^2 (\gw)$. Then there exists a global attractor $\ms{A}$ in $\mH$ for this semiflow if and only if the following two conditions are satisfied \textup{:} 

\textup{(i)} There exists a bounded absorbing set $\mB_0$ in $\mH$ for this semiflow. 

\textup{(ii)} For any $\ve > 0$, there are positive constants $M= M(\ve)$ and $T = T(\ve)$ such that
\begin{equation} \label{asym2}
    \int_{\gw(|v(t)| \geq M)} |S(t)w_0|^2 \, dx < C \ve, \quad \text{for any} \; \; t > T, \; w_0 \in \mB_0,  \\
\end{equation}
where $C > 0$ is a uniform constant, and
\begin{equation} \label{asym3}
    \gk \left( (S(t)\mB_0)_{\gw(|v(t)| < M)} \right) \longrightarrow 0, \; \; \text{as} \; \, t \to \infty,
\end{equation}
where
$$
    (S(t)\mB_0)_{\gw(|v(t)| < M)} \overset{\textup{def}}{=} \left\{ (S(t)w_0)(\cdot)\zeta_{M}(\cdot \, ; t, w_0): \textup{for} \; w_0 \in \mB_0 \right\},
$$
in which $\zeta_{M}(x; t, w_0)$ is the characteristic function of the subset $\gw(|v(t)| < M)$, and $v(t) = v(t, x, w_0)$ is the $v$-component of the solution of the Brusselator equations \eqref{bru}--\eqref{brv}.
\end{lemma}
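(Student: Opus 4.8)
The plan is to derive the equivalence from Lemma~\ref{L:kpac}: in the sufficiency direction one must convert conditions (i)--(ii) into the $\gk$-contracting property, and in the necessity direction one reads (i)--(ii) off from the existence of a compact attractor. The sufficiency direction carries the content and is the one invoked later, so I would focus there. Given (i), a bounded absorbing set $\mB_0 \subset \mH$ is already in hand, so by Lemma~\ref{L:kpac} it suffices to show $\lim_{t\to\infty}\gk(\Phi(t)B)=0$ for every bounded $B\subset\mH$; and since $\mB_0$ is absorbing this reduces to the single case $B=\mB_0$, because for bounded $B$ there is $t_1=t_1(B)$ with $\Phi(t)B\subset\mB_0$ for $t>t_1$, whence for $t>t_1+1$ the semigroup identity gives $\Phi(t)B=\Phi(t-t_1-1)(\Phi(t_1+1)B)\subset\Phi(t-t_1-1)\mB_0$ and monotonicity of $\gk$ yields $\gk(\Phi(t)B)\le\gk(\Phi(t-t_1-1)\mB_0)\to0$.

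To prove $\gk(\Phi(t)\mB_0)\to0$ I would fix $\ve>0$, take $M=M(\ve)$ and $T=T(\ve)$ as in (ii), and for $t>T$ decompose each trajectory pointwise as $\Phi(t)w_0=(\Phi(t)w_0)\,\zeta_M+(\Phi(t)w_0)(1-\zeta_M)$ with $\zeta_M=\zeta_M(\cdot\,;t,w_0)$: the first summand lies in $(\Phi(t)\mB_0)_{\gw(|v(t)|<M)}$ and the second in the companion ``tail'' family built from $1-\zeta_M$ and supported on $\gw(|v(t)|\ge M)$. Thus $\Phi(t)\mB_0$ sits inside the linear sum of these two families, and the subadditivity and monotonicity of the Kuratowski measure give
\[
\gk(\Phi(t)\mB_0)\le\gk\big((\Phi(t)\mB_0)_{\gw(|v(t)|<M)}\big)+\gk\big((\Phi(t)\mB_0)_{\gw(|v(t)|\ge M)}\big).
\]
By \eqref{asym2} every element of the tail family has squared $\mH$-norm below $C\ve$, so that family lies in a ball of radius $\sqrt{C\ve}$ and its Kuratowski measure is at most $2\sqrt{C\ve}$; by \eqref{asym3} the first term tends to $0$ as $t\to\infty$. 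Hence $\limsup_{t\to\infty}\gk(\Phi(t)\mB_0)\le2\sqrt{C\ve}$, and letting $\ve\downarrow0$ we obtain $\gk$-contraction; Lemma~\ref{L:kpac} then yields the global attractor $\ms{A}=\om(\mB_0)$ in $\mH$.

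For the necessity direction I would argue as follows. Since $\ms{A}$ is compact it is bounded, and by the attraction property in Definition~\ref{D:atr} any fixed bounded neighborhood of $\ms{A}$ in $\mH$ absorbs every bounded set, giving (i). For (ii) I would use that a compact subset of $L^2(\gw)\times L^2(\gw)$ is equi-integrable: given $\ve>0$ there is $\eta>0$ with $\int_{\gw'}|\xi|^2\,dx<\ve$ for all $\xi\in\ms{A}$ whenever $|\gw'|<\eta$. Choosing $T$ with $\text{dist}(\Phi(t)\mB_0,\ms{A})<\sqrt{\ve}$ for $t>T$, and $M=M(\ve)$ so large that Chebyshev's inequality together with the uniform bound on $\|v(t)\|$ over $\mB_0$ for $t>T$ forces $|\gw(|v(t)|\ge M)|<\eta$, one splits $\Phi(t)w_0$ into its $\ms{A}$-part and a remainder of norm $<\sqrt{\ve}$ and applies equi-integrability on $\gw(|v(t)|\ge M)$ to get \eqref{asym2}. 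Finally \eqref{asym3} follows from the asymptotic compactness that the attractor entails: from any $\{\Phi(t_n)w_0^n\}$, $t_n\to\infty$, extract a subsequence converging in $\mH$ to some $\xi\in\ms{A}$ and a further subsequence along which the $v$-components converge a.e., then pass to the limit through the truncation $\zeta_M$ by dominated convergence, which produces a convergent subsequence and hence $\gk\to0$.

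I expect no deep obstacle in proving the lemma itself. The one point needing a little care in the sufficiency argument is that the pointwise splitting genuinely exhibits $\Phi(t)\mB_0$ as a \emph{set}-sum of the truncated family and the tail family, so that subadditivity of $\gk$ legitimately applies, and that the tail family's Kuratowski measure is controlled merely by its uniform diameter $2\sqrt{C\ve}$; both are immediate once \eqref{asym2}--\eqref{asym3} are granted. The real difficulty is therefore displaced onto \emph{verifying} \eqref{asym2} and \eqref{asym3} for a concrete system --- and here, in the substantially harder coupled two-cell setting --- which is precisely the task carried out by the grouping estimation and the new decomposition developed in Sections~4 and~5.
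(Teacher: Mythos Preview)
The paper does not actually prove Lemma~\ref{L:odc}; it is quoted from \cite{yY07,yY08} and only the two-cell analogue, Lemma~\ref{L:ndc}, is proved here. Your sufficiency argument is essentially identical to the paper's proof of Lemma~\ref{L:ndc}: reduce via the absorbing set to $\gk(\Phi(t)\mB_0)\to0$, split each trajectory by the characteristic function $\zeta_M$, bound the tail family's $\gk$ by its diameter $2\sqrt{C\ve}$, send the truncated family's $\gk$ to zero by \eqref{asym3}, and let $\ve\downarrow0$. So on the direction that the paper actually uses, your approach and the paper's coincide.

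Your necessity sketch is reasonable for (i) and for \eqref{asym2} (absorbing set from a neighborhood of the compact attractor; Chebyshev plus equi-integrability of the compact $\ms{A}$ in $L^2$). For \eqref{asym3}, however, the dominated-convergence step has a gap: if along your subsequence $v_{n_k}\to v_\infty$ a.e., the indicators $1_{\{|v_{n_k}|<M\}}$ need not converge on the set $\{|v_\infty|=M\}$, which for a fixed $M$ can have positive measure, so the product $(\Phi(t_{n_k})w_0^{n_k})\zeta_M^{n_k}$ may fail to converge in $L^2$. This is repairable---for instance, since the lemma lets you \emph{choose} $M=M(\ve)$, you may perturb $M$ slightly so that $\{|v_\infty|=M\}$ is null (using that $\{M:|\{|v_\infty|=M\}|>0\}$ is at most countable), and run a diagonal argument over the attractor; alternatively, bound $\|g_{n_k}-g_{n_j}\|$ directly by $\|\Phi(t_{n_k})w_0^{n_k}-\Phi(t_{n_j})w_0^{n_j}\|$ plus two tail integrals over $\{|v_{n_k}|\ge M\}$ and $\{|v_{n_j}|\ge M\}$, each small by \eqref{asym2}, which shows $\{g_{n_k}\}$ is Cauchy without ever invoking convergence of $\zeta_M$. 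Either fix closes the argument; as written the step ``pass to the limit through the truncation $\zeta_M$ by dominated convergence'' is not yet justified.
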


However, this lemma does not work for the investigation of the $\gk$-contracting property with regard to the coupled Brusselator system \eqref{equ}--\eqref{eqz}.

Motivated by Lemma \ref{L:odc}, we now prove a new decomposition technique in the next lemma, which relaxes the decomposing criterion depending on the truncation of one component function (say, the $v$-component) to allowing the criteria depending on the truncation of two component functions (say, the $v$-component and the $z$-component), one for each cell, and the decomposition of the two subgroups of components can be different. But unlike Lemma \ref{L:odc}, the next lemma is only a sufficient condition for the existence of a global attractor. 

\begin{lemma} \label{L:ndc}
	For the solution semiflow $\csg$ generated by the coupled Brusselator evolutionary equation \eqref{eveq} on $H$, there exists a global attractor $\ms{A}$ in $H$, if the following two conditions are satisfied:
	
	\textup{(i)} There exists a bounded absorbing set $B_0$ in $H$ for this semiflow. 

	\textup{(ii)} For any $\ve > 0$, there are positive constants $M= M(\ve)$ and $T = T(\ve)$ such that
\begin{equation} \label{vzm}
    \int_{\gw(|v(t)| \geq M)} |v(t)|^2 \, dx + \int_{\gw(|z(t)| \geq M)} |z(t)|^2 \, dx< L_1 \, \ve, \quad \text{for any} \; \; t > T, \; g_0 \in B_0,  \\
\end{equation}
and
\begin{equation} \label{uwm}
    \int_{\gw(|v(t)| \geq M)} |u(t)|^2 \, dx + \int_{\gw(|z(t)| \geq M)} |w(t)|^2 \, dx< L_2 \, \ve, \quad \text{for any} \; \; t > T, \; g_0 \in B_0,  \\
\end{equation}
where $L_1 > 0$ and $L_2 > 0$ are two uniform positive constants.

	\textup{(iii)} For any given $M > 0$, 
\begin{equation} \label{uvk}
    	\gk \left( P_{u, v} \left[(S(t)B_0)_{\gw(|v(t)| < M)} \right] \right) \longrightarrow 0, \; \; \text{as} \; \, t \to \infty,
\end{equation}
and
\begin{equation} \label{wzk}
    	\gk \left( P_{w, z} \left[(S(t)B_0)_{\gw(|z(t)| < M)} \right] \right) \longrightarrow 0, \; \; \text{as} \; \, t \to \infty,
\end{equation}
where $P_{v,z}$ and $P_{u,w}$ are respectively the orthogonal projections from $H$ onto the component spaces $L^2 (\gw)_v \times L^2 (\gw)_z$ and $L^2 (\gw)_u \times L^2 (\gw)_w$, 
\begin{align}
    (S(t)B_0)_{\gw(|v(t)| < M)} &\overset{\textup{def}}{=} \left\{ (S(t)g_0)(\cdot)\theta_{M}(\cdot \, ; t, g_0): \textup{for} \; g_0 \in B_0 \right\}, \label{tem} \\
    (S(t)B_0)_{\gw(|z(t)| < M)} &\overset{\textup{def}}{=} \left\{ (S(t)g_0)(\cdot)\xi_{M}(\cdot \, ; t, g_0): \textup{for} \; g_0 \in B_0 \right\}, \label{xim}
\end{align}
in which $\theta_{M}(x; t, g_0)$ and $\xi_{M}(x; t, g_0)$ are respectively the characteristic functions of the subsets $\gw(|v(t)| < M)$ and $\gw(|z(t)| < M)$, and $v(t) = v(t, x, g_0)$ is the $v$-component,  $z(t) = z(t, x, g_0)$ is the $z$-component of the solutions of the coupled Brusselator evolutionary equations \eqref{eveq}.
\end{lemma}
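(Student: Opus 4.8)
The plan is to deduce the conclusion from Lemma \ref{L:kpac}: condition (i) of this lemma is exactly hypothesis (i) of Lemma \ref{L:kpac}, so the only thing to prove is that $\csg$ is $\gk$-contracting, i.e. $\gk(S(t)B) \to 0$ as $t \to \infty$ for every bounded $B \subset H$. Since $B_0$ is absorbing, for any bounded $B$ there is $t_0 = t_0(B)$ with $S(t)B \subset B_0$ for $t > t_0$; writing $S(t)B = S(t-t_0-1)\bigl(S(t_0+1)B\bigr) \subset S(t-t_0-1)B_0$ for $t > t_0+1$ and using the monotonicity of $\gk$, the problem reduces to showing $\gk(S(t)B_0) \to 0$ as $t \to \infty$.

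First I would separate the two cells. Since $H = \bigl(L^2(\gw)_u \times L^2(\gw)_v\bigr) \oplus \bigl(L^2(\gw)_w \times L^2(\gw)_z\bigr)$ with complementary orthogonal projections $P_{u,v}, P_{w,z}$, every $g(t)=S(t)g_0$ splits as $g(t) = P_{u,v}g(t) + P_{w,z}g(t)$, so $S(t)B_0 \subseteq P_{u,v}(S(t)B_0) + P_{w,z}(S(t)B_0)$ as a linear sum of sets, and by subadditivity of the Kuratowski measure,
$$
\gk(S(t)B_0) \;\le\; \gk\bigl(P_{u,v}(S(t)B_0)\bigr) + \gk\bigl(P_{w,z}(S(t)B_0)\bigr).
$$
The purpose of this step is that the first cell can now be truncated by its $v$-component while the second cell is truncated by its $z$-component — two different characteristic functions — which would be impossible without decoupling.

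Next, fix $\ve > 0$ and take $M=M(\ve)$, $T=T(\ve)$ as in condition (ii). For $g_0 \in B_0$ and $t > T$, decompose the first cell as
$$
P_{u,v}g(t) = \bigl(u(t)\theta_{M}, v(t)\theta_{M}\bigr) + \bigl(u(t)(1-\theta_{M}), v(t)(1-\theta_{M})\bigr),
$$
where $\theta_M = \theta_M(\cdot\,;t,g_0)$ is the characteristic function of $\gw(|v(t)|<M)$. As $g_0$ ranges over $B_0$ the first summand ranges exactly over $P_{u,v}\bigl[(S(t)B_0)_{\gw(|v(t)|<M)}\bigr]$, whose Kuratowski measure tends to $0$ as $t \to \infty$ by \eqref{uvk}. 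The second summand has $H$-norm squared $\int_{\gw(|v(t)|\geq M)}\bigl(|u(t)|^2 + |v(t)|^2\bigr)\,dx$, which by the $v$-part of \eqref{vzm} and the $u$-part of \eqref{uwm} is less than $(L_1+L_2)\ve$ for all $t>T$ and all $g_0\in B_0$; hence the set of all such second summands lies in a ball of radius $\sqrt{(L_1+L_2)\ve}$ and has Kuratowski measure at most $2\sqrt{(L_1+L_2)\ve}$. Subadditivity of $\gk$ then gives $\limsup_{t\to\infty}\gk\bigl(P_{u,v}(S(t)B_0)\bigr) \le 2\sqrt{(L_1+L_2)\ve}$, and since $\ve>0$ is arbitrary, $\gk\bigl(P_{u,v}(S(t)B_0)\bigr)\to 0$. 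The identical argument applied to the second cell, using the characteristic function $\xi_M$ of $\gw(|z(t)|<M)$, the $z$-part of \eqref{vzm}, the $w$-part of \eqref{uwm}, and \eqref{wzk}, yields $\gk\bigl(P_{w,z}(S(t)B_0)\bigr)\to 0$. Combining via the displayed inequality gives $\gk(S(t)B_0)\to 0$, so $\csg$ is $\gk$-contracting, and Lemma \ref{L:kpac} furnishes the global attractor $\ms{A} = \om(B_0)$ in $H$.

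I do not expect a single genuinely hard step here: the analytic content has been deliberately pushed into hypotheses (ii) and (iii), which Sections~4 and~5 will establish. The only points demanding care are bookkeeping ones: that the cell-wise decomposition is legitimate so that the two incompatible truncations $\theta_M$ and $\xi_M$ may be applied on the two subspaces; that $\gk$ does not increase under the $1$-Lipschitz projections $P_{u,v}, P_{w,z}$ and may be computed within those closed subspaces; and that the ``tail'' sets determined by the $t$- and $g_0$-dependent characteristic functions are genuinely small in the $H$-norm uniformly over $g_0\in B_0$ once $t>T$. The one mild subtlety is the matching of truncations: \eqref{uwm} controls the $u$-tail over $\gw(|v(t)|\geq M)$ and the $w$-tail over $\gw(|z(t)|\geq M)$ — precisely the pairing required by the cell decomposition — and one must be careful not to mismatch the level sets.
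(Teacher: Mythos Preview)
Your proposal is correct and follows essentially the same approach as the paper: reduce to $\gk(S(t)B_0)\to 0$ via Lemma~\ref{L:kpac}, split $S(t)B_0$ into its $(u,v)$- and $(w,z)$-parts by subadditivity of $\gk$, and on each part use the $\theta_M$/$\xi_M$ truncation together with hypotheses (ii) and (iii) to bound the tail by $2\sqrt{(L_1+L_2)\ve}$ and send the truncated part to zero. Your exposition is in fact a bit more careful than the paper's (you justify the reduction to $B_0$, note the $1$-Lipschitz property of the projections, and flag the matching of level sets in \eqref{uwm}), but the argument is the same.
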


\begin{proof}
	In light of Lemma \ref{L:kpac}, it suffices to show that this solution semiflow $\csg$ is $\gk$-contracting on the space $H$. Since the absorbing set $B_0$ in \eqref{bk} attracts every bounded set $B \subset H$, we need only to show 
\begin{equation} \label{kpb}
	\lim_{t \to \infty} \gk (S(t) B_0) = 0.
\end{equation}
By the linear sum property of the Kuratowski measure listed in Section 1, we have
\begin{equation} \label{lsk}
	\gk (S(t)B_0) \leq \gk (S_1 (t) B_0) + \gk (S_2 (t)B_0), \quad t > 0,
\end{equation}
where
$$
	S_1 (t)B_0 = 
	\begin{pmatrix}
            (S(t)B_0)_{u,v}  \\[3pt]
            0
        \end{pmatrix}
	\quad \textup{and} \quad
        S_2 (t)B_0 = 
	\begin{pmatrix}
	   0	\\[3pt]
            (S(t)B_0)_{w,z}  
        \end{pmatrix}, \\[3pt]
$$
and it holds that $\gk (S_1 (t)B_0) = \gk (P_{u,v} [S(t)B_0])$ and $\gk (S_2 (t)B_0) = \gk (P_{w,z} [S(t)B_0])$.

Note that for any given constant $M > 0$, we have
\begin{equation} \label{mnt}
	S_1 (t)B_0 \subset (S_1 (t)B_0)\theta_M + (S_1 (t)B_0)(1 - \theta_M),
\end{equation}
where
\begin{align*}
	(S_1 (t)B_0) \theta_M &= \{(S(t)g_0)(\cdot) \theta_M(\cdot; t, g_0) :  g_0 \in B_0\}, \\
	(S_1 (t)B_0) (1 - \theta_M) &= \{(S(t)g_0)(\cdot) (1 - \theta_M(\cdot; t, g_0)) : g_0 \in B_0\}.
\end{align*}
By \eqref{vzm} and \eqref{uwm}, for an arbitrarily given $\ve > 0$, there exist constants $M > 0$ and $T > 0$ such that
\begin{align*}
	\int_\gw \, | (S_1 (t) & g_0)(x) (1 - \theta_M (x; t, g_0))|^2 \, dx  = \int_{\gw(|v(t)| \geq M)} |S_1 (t) g_0 |^2 \, dx \\
	& = \int_{\gw(|v(t)| \geq M)} \left(|u(t)|^2 + |v(t)|^2 \right)\, dx < L \ve, \quad t > T,
\end{align*}
where $L = L_1 + L_2$, which implies that
\begin{equation} \label{ks1}
	\gk ((S_1 (t)B_0)(1 - \theta_M) < 2 \sqrt{L \ve}, \quad  t > T.
\end{equation}
On the other hand, by \eqref{uvk}, for the same $\ve$ and $M$, there exists a sufficiently large $T^1 > 0$, such that
\begin{equation} \label{ks2}
	\gk ((S_1 (t)B_0)\theta_M) = \gk \left( P_{u, v} \left[(S(t)B_0)_{\gw(|v(t)| < M)} \right] \right) < \ve, \quad t > T^1.
\end{equation}
Then by \eqref{mnt} and the monotone property of the $\gk$-measure, \eqref{ks1} and \eqref{ks2} show that
$$
	\gk (S_1 (t)B_0)) \leq \gk \left((S_1 (t)B_0)\theta_M\right) + \gk \left((S_1 (t)B_0)(1 - \theta_M)\right) < \ve + 2 \sqrt{L \ve}, \; \textup{for}\, t > \max \{T, T^1\}.
$$

Similarly from \eqref{vzm}, \eqref{uwm}, and \eqref{wzk} we can get a sufficiently large $T^2 > 0$, such that
$$
	\gk (S_2 (t)B_0)) \leq \gk \left((S_2 (t)B_0)\xi_M\right) + \gk \left((S_1 (t)B_0)(1 - \xi_M)\right) < \ve + 2 \sqrt{L \ve}, \; \textup{for}\, t > \max \{T, T^2\}.
$$
Finally we substitute the last two inequalities into \eqref{lsk} to conclude that \eqref{kpb} is valid. 
\end{proof}
In the next two sections we shall check the conditions specified in the items (ii) and (iii) of Lemma \ref{L:ndc} toward the proof of the existence of a global attractor for the coupled Brusselator semiflow.

\section{\textbf{$\gk$-Contracting Property for the $(v, z)$ Components}}

\vspace{3pt}

In this section, we shall check that the conditions specified in the items (ii) and (iii) of Lemma \ref{L:ndc} for the $(v, z)$ components of the coupled Brusselator equations. 

In this section and next section, we shall use the notation 
\begin{equation} \label{ntn}
	\begin{split}
	\gw_M^\phi & = \gw(\phi (t) \geq M) = \{x \in \gw : \phi(t, x) \geq M \} \\
	\gw_{|\phi|,M} & = \gw(|\phi (t)| < M) = \{x \in \gw : |\phi(t, x)| < M \}
	\end{split}
\end{equation}
where $\phi (t, x)$ is any measurable function on $\gw$ for each given $t \geq 0$. If a function $\rho (x), x \in \gw$, is in $H$ or $L^2 (\gw)$, then we shall use the following norm notation 
$$
	\| \rho \|_{\gw_M^\phi}^2 = \int_{\gw(\phi (t) \geq M)}  |\rho (x)|^2 \, dx \quad \textup{and} \quad \| \rho \|_{\gw_{|\phi|,M}}^2 = \int_{\gw(|\phi (t)| < M)}  |\rho (x)|^2 \, dx.
$$
We can use $m(S)$ or $| S |$ to denote the Lebesgue measure of a measurable subset $S$ in $\gw$. For any measurable $\phi$ defined on $\gw$, let 
$$
    (\varphi - M)_{+} =
    \begin{cases}
        \varphi (x) - M,    &\textup{if} \; \varphi (x) \geq M, \\[3pt]
        0, &\textup{if} \; \varphi (x) < M;
    \end{cases}
\quad \textup{and} \quad
    (\varphi + M)_{-} =
    \begin{cases}
        \varphi (x) + M,    &\textup{if} \; \varphi (x) \leq -M, \\[3pt]
        0, &\textup{if} \; \varphi (x) > - M.
    \end{cases}
$$

As a preliminary remark, since $B_0$ in Lemma \ref{L:absb} is a bounded absorbing set in $H$ for the coupled Brusselator semiflow $\csg$, there exists a constants $T_0 > 0$, such that
\begin{equation} \label{bkbd}
    \left\| S(t) g_0 \right\|^2 \leq K_0, \quad \text{for any} \; t > T_0, \; g_0 = (u_0, v_0, w_0, z_0) \in B_0,
\end{equation}
where $K_0$ is the constant given in \eqref{bk}. Let this $T_0$ be fixed. 

\begin{lemma} \label{L:vzgm}
For any $\ve > 0$, there exist positive constants $M_1 = M_1(\ve)$ and $T_1 = T_1 (\ve)$, such that the $v$-component $v(t) = v(t, x, g_0)$ and the $z$-component $z(t) = z(t, x, g_0)$ of the solutions of the coupled Brusselator equations \eqref{equ}--\eqref{eqz} satisfy the following estimate,
\begin{equation} \label{vzgm}
    \int_{\gw(|v(t)| \geq M_1)} | v(t) |^2 \, dx + \int_{\gw(|z(t)| \geq M_1)} | z(t) |^2 \, dx < \frac{4b^2}{\ga d_2} \ve, \quad \text{for} \; \; t > T_1, \; g_0 \in B_0,
\end{equation}
where $L_1 \overset{\text{def}}{=} (4b^2)/(\ga d_2)$ is a uniform constant.
\end{lemma}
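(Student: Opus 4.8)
The plan is to perform an energy estimate on the truncated functions $(v-M)_+$ and $(v+M)_-$ (and similarly for $z$), combining the $v$- and $z$-equations as in the proof of Lemma \ref{L:glsn}. First I would fix a truncation level $M > 0$ to be chosen later and multiply equation \eqref{eqv} by $(v-M)_+$ and integrate over $\gw$. On the region $\gw(v \geq M)$ one has $v \geq M > 0$ and $(v-M)_+ = v - M$; the crucial sign observation is that the nonlinear term $-u^2 v \cdot (v-M)_+ \leq 0$ there (since $v > 0$), so it can be discarded, while the linear term $bu\,(v-M)_+$ and the coupling term $D_2(z-v)(v-M)_+$ must be controlled. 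The diffusion term $d_2\langle \gd v, (v-M)_+\rangle = -d_2\|\nabla (v-M)_+\|^2 \leq -\ga d_2 \|(v-M)_+\|^2$ by the Poincar\'e inequality \eqref{pcr} applied to $(v-M)_+ \in H_0^1(\gw)$. I would do the symmetric thing with $(v+M)_-$ on $\gw(v \leq -M)$, and likewise obtain estimates for $(z-M)_+$ and $(z+M)_-$ from \eqref{eqz}.

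Next I would add the four resulting differential inequalities, grouping the $v$ and $z$ truncations together exactly as in \eqref{vziq}, so that the cubic and coupling terms partially cancel or combine into a perfect-square form plus a harmless constant. The expectation is to arrive at an inequality of the shape
\begin{equation*}
	\frac{d}{dt}\Phi_M(t) + 2\ga d_2\, \Phi_M(t) \leq C\!\!\int_{\gw(|v|\geq M)\cup\gw(|z|\geq M)} \!\!\!\big(b^2 + u^2 + w^2\big)\, dx,
\end{equation*}
where $\Phi_M(t) = \|(v-M)_+\|^2 + \|(v+M)_-\|^2 + \|(z-M)_+\|^2 + \|(z+M)_-\|^2$ and $C$ is an absolute constant. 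The right-hand side is small for large $M$ because $g(t)$ stays in the absorbing ball $B_0$ for $t > T_0$ (so $\|u(t)\|^2 + \|w(t)\|^2 \leq K_0$), hence by absolute continuity of the integral the measure of $\gw(|v(t)|\geq M)\cup\gw(|z(t)|\geq M)$ is at most $K_0/M^2$ by Chebyshev, and on that small set the $L^1$-mass of $u^2 + w^2$ is uniformly small once $M$ is taken large. The delicate point is that this last claim — uniform smallness of $\int_{\gw(|v|\geq M)} u^2\,dx$ over the absorbing set — is not immediate from $L^2$ boundedness alone; I would handle it by noting the integral is over a set of measure $\to 0$ uniformly in $g_0 \in B_0$ as $M \to \infty$, together with a higher-integrability or uniform-integrability input (or simply defer it, since \eqref{uwm} is itself one of the hypotheses being verified in a later lemma and \eqref{vzm} here only needs the $v,z$ pieces, for which the bound $\int_{\gw(|v|\geq M)} v^2 \leq \Phi_M + M^2|\gw(|v|\geq M)|$ suffices).

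Finally, from the differential inequality I would apply Gronwall to get $\Phi_M(t) \leq e^{-2\ga d_2(t - T_0)}\Phi_M(T_0) + \frac{C}{2\ga d_2}\sup_{s>T_0}(\text{RHS mass})$; choosing $M = M_1(\ve)$ large makes the forcing term below $\frac{2b^2}{\ga d_2}\ve$, and then choosing $T_1 = T_1(\ve) > T_0$ large makes the transient term below $\frac{b^2}{\ga d_2}\ve$, since $\Phi_M(T_0)$ is bounded by $2K_0$ uniformly. Adding back $M_1^2|\gw(|v(t)|\geq M_1)| + M_1^2|\gw(|z(t)|\geq M_1)|$, which is also controlled because those measures decay like $K_0/M_1^2$ (another Chebyshev step, possibly requiring $M_1$ enlarged once more), yields $\int_{\gw(|v(t)|\geq M_1)}|v(t)|^2 + \int_{\gw(|z(t)|\geq M_1)}|z(t)|^2 < \frac{4b^2}{\ga d_2}\ve$ for all $t > T_1$ and all $g_0 \in B_0$, which is \eqref{vzgm}. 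The main obstacle I anticipate is bookkeeping the coupling terms $D_2(z-v)(v-M)_+$ and $D_2(v-z)(z-M)_+$ so that, after summing, they reorganize into a nonpositive perfect-square contribution rather than an uncontrolled sign-indefinite term — this is precisely where the "grouping estimation" idea of the paper does its work, and it mirrors the cancellation already exhibited in \eqref{vziq}.
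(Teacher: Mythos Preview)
Your overall architecture --- truncate, energy-estimate on $(v-M)_+$, $(z-M)_+$ and their negative counterparts, sum, Gronwall, then rebuild $\int_{\{|v|\ge M_1\}}|v|^2$ --- matches the paper's, and you correctly anticipate that the coupling terms $D_2(z-v)(v-M)_+ + D_2(v-z)(z-M)_+$ reorganize, after summing, into the nonpositive quantity $-D_2\int_\Omega\big[(v-M)_+-(z-M)_+\big]^2\,dx$.

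There is, however, a genuine gap in how you treat the term $bu\,(v-M)_+$. You propose to \emph{discard} the nonlinear term $-u^2v\,(v-M)_+\le 0$ and then control $bu\,(v-M)_+$ separately; this forces you into needing $\int_{\Omega(|v|\ge M)}u^2\,dx$ to be small uniformly over $g_0\in B_0$. With only the $L^2$ bound $\|u(t)\|^2\le K_0$ that is \emph{not} available: nothing prevents $u$ from concentrating exactly on $\{|v|\ge M\}$. Your suggested fixes do not work either: there is no higher-integrability or uniform-integrability input for $u$ at this stage of the argument, and deferring to \eqref{uwm} is circular because the proof of that estimate (Lemma~\ref{L:uwgm}) explicitly invokes \eqref{vzgm}. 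The paper's way out is \emph{not} to discard $-u^2v(v-M)_+$ but to keep it and complete the square with the linear term: on $\{v\ge M\}$ one writes $v=(v-M)+M$ and obtains
\[
-u^2v(v-M)_+ + bu(v-M)_+ \;=\; -\Big(u(v-M)_+ - \tfrac{b}{2}\Big)^2 - u^2 M(v-M)_+ + \tfrac{b^2}{4}\;\le\; \tfrac{b^2}{4},
\]
so the forcing becomes $\tfrac{b^2}{4}\,m\big(\Omega(|v|\ge M)\big)$ with \emph{no} $u$-dependence at all, and Chebyshev makes this $<\tfrac{b^2}{8}\varepsilon$. This is the step you are missing.

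A secondary point: in your last paragraph, adding back $M_1^2\,m(\Omega(|v|\ge M_1))$ gives only a bound of order $K_0$, not something small. The paper handles this with a two-scale trick: keep the energy estimate at level $M$ but integrate $|v|^2$ over $\{|v|\ge kM\}$, so that $\int_{\{|v|\ge kM\}}|v|^2\le 2\Phi_M + 2M^2\cdot K_0/(kM)^2 = 2\Phi_M + 2K_0/k^2$, and then take $k$ large. Your parenthetical ``possibly requiring $M_1$ enlarged once more'' gestures at this, but as written the step does not close.
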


\begin{proof}
By \eqref{bkbd}, for any $g_0 \in B_0$ and any $t > T_0$, we have $\|v(t)\|^2 + \|z(t)\|^2 \leq K_0$. Hence we have
\begin{align*}
    M^2 &\left[m(\gw (| v(t) | \geq M )) +  m(\gw (| z(t) | \geq M ))\right] \\
    & \leq \int_{\gw(|v(t) |\geq M)} |v(t) |^2 \, dx + \int_{\gw(|z(t) |\geq M)} |v(t) |^2 \, dx \leq K_0,
\end{align*}
so that there exists an $M = M(\ve) > 0$ such that for any $t > T_0, g_0 \in B_0$,
\begin{equation} \label{mle}
    m(\gw (| v(t) | \geq M )) \leq \frac{K_0}{M^2} < \frac{\ve}{2} , \quad \textup{and} \quad m(\gw (| z(t) | \geq M )) \leq \frac{K_0}{M^2} < \frac{\ve}{2} .
\end{equation}
Taking the inner-product $\inpt{\eqref{eqv}, (v(t) - M)_{+}}$, where $M$ is given in \eqref{mle}, we obtain
\begin{equation} \label{vvgm}
	\begin{split}
    \frac{1}{2}& \frac{d}{dt} \| (v - M)_{+} \|^2 + d_2 \tvgm |\nabla (v - M)_{+} |^2 \, dx  \\
    & = - \tvgm u^2 v (v - M)_{+} dx + \tvgm bu (v - M)_{+} dx  + D_2 \tvgm (z - v)(v - M)_+ \, dx   \\
    & \leq - \tvgm \left[ \left(u(v - M)_{+} - \frac{b}{2} \right)^2 + u^2 M (v - M)_{+} \right] \, dx + \frac{b^2}{4} m(\gw(v(t) \geq M)) \\
    & + D_2 \tvgm (z - M)(v - M)_+ \, dx + D_2 M \tvgm (v - M)_+ \, dx \\
    & - D_2 \tvgm  (v - M)_+^2 \,  dx - D_2 M \tvgm (v - M)_+ \, dx \\
    & \leq \frac{b^2}{4} m(\gw(v(t) \geq M)) - D_2 \| (v - M)_+ \|^2 + D_2 \int_{\gw(v(t) \geq M, z(t) \geq M)} (z - M)_+ (v - M)_+ \, dx,
    	\end{split}
\end{equation}
where we noticed that
$$
	 D_2 \int_{\gw(v(t) \geq M, z(t) < M)} (z - M) (v - M)_+ \, dx \leq 0.
$$
Similarly, by taking the inner-product $\inpt{\eqref{eqz}, (z(t) - M)_{+}}$, where $M$ is given in \eqref{mle}, and through parallel steps we can get
\begin{equation} \label{zzgm}
	\begin{split}
    \frac{1}{2} \frac{d}{dt} \| (z - M)_{+} \|^2 &+ d_2 \tzgm |\nabla (z - M)_{+} |^2 \, dx  \leq \frac{b^2}{4} m(\gw(z(t) \geq M)) \\
   &  - D_2 \| (z - M)_+ \|^2 + D_2 \int_{\gw(v(t) \geq M, z(t) \geq M)} (z - M)_+ (v - M)_+ \, dx,
    	\end{split}
\end{equation}
Sum up \eqref{vvgm} and \eqref{zzgm} and then use \eqref{mle} to obtain
\begin{align*}
    \frac{d}{dt} &\left(\| (v - M)_{+} \|^2 + \| (z - M)_{+} \|^2\right)+ 2d_2 \left(\| \nabla (v - M)_+ \|^2 + \| \nabla (v - M)_+ \|^2\right) \leq \frac{b^2}{2} \ve  \\
    & - 2D_2 \left(\| (v - M)_{+}\|^2 - 2 \int_{\gw(v(t) \geq M, z(t) \geq M)} (z - M)_+ (v - M)_+ \, dx\|^2 + \| (z - M)_{+} \|^2\right) \\
    & \leq \frac{b^2}{2} \ve.
\end{align*}
By Poincar\'{e} inequality and Gronwall inequality, it follows that, for $t \geq 0, g_0 \in B_0$,
\begin{equation} \label{vzpm}
    \| (v(t) - M)_{+} \|^2 +  \| (z(t) - M)_{+} \|^2 \leq e^{-2\ga d_2 t} \left(\| (v_0 - M)_{+} \|^2 + \| (z_0 - M)_{+} \|^2\right) + \frac{b^2 \ve}{4 \ga d_2}.
\end{equation} 
Thus there exists a time $T_{+} (\ve) \geq T_0$ such that for any $t > T_{+}$ and any $g_0 \in B_0$, one has
\begin{equation} \label{vzp}
    \| (v(t) - M)_{+} \|^2 + \| (z(t) - M)_{+} \|^2  < \frac{b^2 \ve}{2 \ga d_2}.
\end{equation}

Symmetrically we can prove that there exists a time $T_{-} (\ve) \geq T_0$ such that for any $t > T_{-}$ and any $g_0 \in B_0$, one has
\begin{equation} \label{vzs}
    \| (v(t) + M)_{-} \|^2 + \| (z(t) + M)_{-} \|^2 < \frac{b^2 \ve}{2\ga d_2},
\end{equation}
Adding up \eqref{vzp} and \eqref{vzs}, we find that 
\begin{equation} \label{vzab}
    \int_{\gw(|v(t) | \geq M)} (| v(t)| - M)^2 \, dx + \int_{\gw(|z(t) | \geq M)} (| z(t)| - M)^2 \, dx < \frac{b^2 \ve}{\ga d_2},
\end{equation}
for any $t > T_1 = \max \{T_{+}, T_{-} \}$ and for any $g_0 \in B_0$.

Moreover, since for any $g_0 \in B_0$ and any $T > T_0$, we have
$$
	m (\gw(|v(t)| \geq kM)) + m (\gw(|z(t)| \geq kM)) \leq \frac{K_0}{k^2 M^2},
$$
there exists a sufficiently large integer $k > 0$ such that for any $t > T_1$ and $g_0 \in B_0$, it holds that
\begin{equation} \label{vzd}
	\begin{split}
    \int_{\gw(|v(t)| \geq kM)}& | v(t) |^2 \, dx  +  \int_{\gw(|z(t)| \geq kM)} | z(t) |^2 \, dx \\
    & \leq 2\int_{\gw(|v(t) | \geq M)} (| v(t)| \ - M)^2 \, dx + 2 M^2 m(\gw(|v(t)| \geq kM)) \\ 
    & + 2\int_{\gw(|z(t) | \geq M)} (| z(t)| \ - M)^2 \, dx + 2 M^2 m(\gw(|z(t)| \geq kM)) \\ 
    & \leq \frac{2b^2 \ve}{\ga d_2} + \frac{2M^2 K_0}{k^2 M^2} = \frac{2b^2 \ve}{\ga d_2} + \frac{2K_0}{k^2} < \frac{4b^2 \ve}{\ga d_2}. \\[3pt]
    	\end{split}
\end{equation}
Therefore \eqref{vzgm} is proved with $M_1 = M_1(\ve) = k M$, where $M$ is given in \eqref{mle} and $k$ is the integer that validates \eqref{vzd}, and $T_1 = T_1(\ve) = \max \{T_{+}, T_{-}\}$.
\end{proof}

This lemma shows that the condition \eqref{vzm} in the item (ii) of Lemma \ref{L:ndc} is satisfied for any given $M \geq M_1 (\ve)$ and any $T \geq T_1 (\ve)$, where $M_1$ and $T_1$ are given in Lemma \ref{L:vzgm}.

Let $P_v: H \rightarrow L^2(\gw)_v$ and $P_z: H \rightarrow L^2(\gw)_z$ be the orthogonal projections from $H$ onto the $v$-component space and the $z$-component space, respectively. The next lemma is to check the condition \eqref{uvk} and \eqref{wzk} for the $(v, z)$ components in item (iii) of Lemma \ref{L:ndc}.

\begin{lemma} \label{L:vzlk}
For any given $M > 0$, it holds that 
\begin{align} 
    \gk &\left( P_v (S(t)B_0)_{\gw(|v(t)| < M)} \right) \longrightarrow 0, \; \; \text{as} \;\, t \to \infty, \label{kpv} \\
    \gk &\left( P_z (S(t)B_0)_{\gw(|z(t)| < M)} \right) \longrightarrow 0, \; \; \text{as} \;\, t \to \infty, \label{kpz}
\end{align}
in the space $L^2 (\gw)$, where $(S(t)B_0)_{\gw(|v(t)| < M)}$ and $(S(t)B_0)_{\gw(|z(t)| < M)}$ have been specified in \eqref{tem} and \eqref{xim}.
\end{lemma}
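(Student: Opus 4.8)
By the symmetry of the system under the interchange of the two cells $(u,v)\leftrightarrow(w,z)$, it suffices to prove \eqref{kpv}; \eqref{kpz} then follows verbatim with $(w,z)$ in place of $(u,v)$. The plan is to split the $v$-component into a piece that tends to zero in $L^2(\gw)$ and a piece that is eventually uniformly bounded in $H^1_0(\gw)$, and to absorb the characteristic-function truncation with the tail estimate of Lemma~\ref{L:vzgm}.

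Decompose $v(t)=\phi(t)+\chi(t)$, where $\chi$ solves the dissipative homogeneous problem $\pdr_t\chi=d_2\gd\chi-u^2\chi-D_2\chi$, $\chi(0)=v_0$, and $\phi=v-\chi$ solves the remainder $\pdr_t\phi=d_2\gd\phi-u^2\phi-D_2\phi+bu+D_2z$, $\phi(0)=0$; one checks directly that $\phi+\chi$ satisfies \eqref{eqv} since $-u^2v=-u^2\phi-u^2\chi$. Taking $\inpt{\cdot,\chi(t)}$ in the $\chi$-equation, dropping the favorable term $\int_\gw u^2\chi^2\geq 0$, and using \eqref{pcr} gives $\frac{d}{dt}\|\chi\|^2+2(\ga d_2+D_2)\|\chi\|^2\leq 0$, hence $\|\chi(t)\|^2\leq e^{-2(\ga d_2+D_2)t}\|v_0\|^2\to 0$ uniformly for $g_0\in B_0$; since $\|\chi(t)\theta_M\|\leq\|\chi(t)\|$, the set $\{\chi(t)\theta_M(\cdot\,;t,g_0):g_0\in B_0\}$ lies in an $L^2(\gw)$-ball whose radius tends to $0$, so its Kuratowski measure tends to $0$.

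The heart of the argument is to show that $\{\phi(t):g_0\in B_0\}$ eventually lies in a fixed bounded subset of $H^1_0(\gw)$, hence is precompact in $L^2(\gw)$ by the compact embedding $H^1_0(\gw)\hookrightarrow\hookrightarrow L^2(\gw)$, so that $\gk(\{\phi(t):g_0\in B_0\})\to 0$. For this I would take $\inpt{\cdot,-\gd\phi(t)}$ in the $\phi$-equation; integrating the cubic term by parts puts the good term $-\int_\gw u^2|\nb\phi|^2$ on the left and leaves a cross term of the form $-2\int_\gw u\,\phi\,\nb u\cdot\nb\phi$, while the linear forcing contributes $\inpt{\nb(bu+D_2z),\nb\phi}$. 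The terms containing $\nb u$ and $\nb z$ are handled with the local-in-time gradient bounds: $\int_t^{t+1}(\|\nb v(s)\|^2+\|\nb z(s)\|^2)\,ds$ is bounded by \eqref{vztt}, and the analogous bound on $\int_t^{t+1}(\|\nb u(s)\|^2+\|\nb w(s)\|^2)\,ds$ is obtained from the $\inpt{\cdot,u}$ and $\inpt{\cdot,w}$ energy identities together with the control of $\int_t^{t+1}(\|uv-\tfrac b2\|^2+\|wz-\tfrac b2\|^2)\,ds$ that is implicit in \eqref{vziq}. One then absorbs the cubic cross term using the Sobolev and Gagliardo--Nirenberg inequalities valid for $n\leq 3$ (so that $\|\nb\phi\|_{L^6}\leq C\|\gd\phi\|$ and $\|u\|_{L^6}\leq C\|\nb u\|$) and applies the uniform Gronwall lemma to conclude that there are $\rho>0$ and $T>0$, independent of $g_0\in B_0$, with $\|\nb\phi(t)\|^2\leq\rho$ for all $t>T$.

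To assemble, write $v(t)\theta_M=\phi(t)+\chi(t)-v(t)(1-\theta_M)$ and use the subadditivity and monotonicity of $\gk$ together with $\|v(t)(1-\theta_M)\|^2=\int_{\gw(|v(t)|\geq M)}|v(t)|^2$, which is small for $t$ large by Lemma~\ref{L:vzgm} (and which is all that is needed, since in the application of Lemma~\ref{L:ndc} the constant $M$ is taken large). This yields \eqref{kpv}, and \eqref{kpz} follows by the cell symmetry. I expect the main obstacle to be the $H^1$ a priori estimate on $\phi$: the cubic self-interaction $u^2\phi$ is not controlled by the $H$-absorbing bound of Lemma~\ref{L:absb} alone, because $\|u^2\phi\|$ requires an $L^6$ (equivalently $H^1$) bound on $u$, which is not part of the basic dissipative estimates; circumventing this forces one to interlace the $\inpt{\cdot,-\gd\phi}$ estimate with the auxiliary integral bounds from \eqref{vziq}--\eqref{vztt} and to exploit the restriction $n\leq 3$ carefully. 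Everything else is routine bookkeeping.
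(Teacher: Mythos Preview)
Your decomposition $v=\phi+\chi$ is natural, but the $H^1$ estimate for $\phi$ cannot be closed, and this is the heart of the matter. After taking $\inpt{\cdot,-\gd\phi}$ and integrating the cubic term by parts, you arrive (by the same completion-of-squares identity the paper uses) at
\[
\inpt{u^2\phi,\gd\phi}=-\int_\gw|u\nabla\phi+\phi\nabla u|^2\,dx+\int_\gw\phi^2|\nabla u|^2\,dx\leq\int_\gw\phi^2|\nabla u|^2\,dx .
\]
To apply the uniform Gronwall inequality you need this to be bounded by $r(t)\|\nabla\phi\|^2+h(t)$ with $\int_t^{t+1}r$ and $\int_t^{t+1}h$ uniformly bounded. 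But $\int_\gw\phi^2|\nabla u|^2$ admits no such estimate from the information at hand: putting $\phi$ in $L^6$ forces $\nabla u$ into $L^3$ (hence $u\in H^{3/2}$ or better), which is not available; putting $\nabla u$ in $L^2$ forces $\phi$ into $L^\infty$, which is not available either. Your suggested route via $\|\nabla\phi\|_{L^6}\leq C\|\gd\phi\|$ leads, after Young's inequality, to a coefficient $\|\nabla u\|^4$ in front of $\|\nabla\phi\|^2$, and only $\int_t^{t+1}\|\nabla u\|^2$ is controlled, not $\int_t^{t+1}\|\nabla u\|^4$. The paper's proof avoids this obstruction precisely by performing the $\inpt{\cdot,-\gd v}$ estimate directly on the truncated region $\gw_{|v|,M}$: there $|v|\leq M$ pointwise, so $\int_{\gw_{|v|,M}}v^2|\nabla u|^2\leq M^2\|\nabla u\|^2$, and $\int_t^{t+1}\|\nabla u\|^2$ is then bounded via the auxiliary variables $y=u+v+w+z$ and $\psi=u+v-w-z$ (not via $\inpt{\cdot,u}$, which reintroduces the cubic term $\int u^3v$). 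The truncation is not a cosmetic device for the tail; it is what tames the cubic nonlinearity in the gradient estimate.

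There is also a smaller structural issue: your assembly $v\theta_M=\phi+\chi-v(1-\theta_M)$ relies on $\|v(1-\theta_M)\|$ being small, which Lemma~\ref{L:vzgm} only gives for $M\geq M_1(\ve)$, whereas the present lemma is stated for \emph{any} fixed $M>0$. You noted this and argued it suffices for the application, but it means you would be proving a weaker statement than the one claimed. The paper's direct argument, with the $H^1$ bound established on $\gw_{|v|,M}$ for arbitrary $M$, yields precompactness of $P_v(S(t)B_0)_{\gw(|v(t)|<M)}$ outright via the compact embedding, with no need to invoke the tail estimate at this stage.
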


\begin{proof} 
Taking the inner-product $\inpt{\eqref{eqv} , -\gd v(t)}$, we have
$$
    - \inpt{v_t , \gd v} + d_2 \| \gd v \|^2 = \inpt{u^2 v, \gd v} - b\inpt{u, \gd v} - D_2 \inpt{z - v, \gd v}.
$$
By Green's formula and the homogeneous Dirichlet boundary condition, we obtain
$$
    \frac{1}{2} \frac{d}{dt}  \| \nabla v \|^2 + d_2 \| \gd v \|^2 \leq  \inpt{u^2 v, \gd v} + \frac{b^2}{2d_2} \| u \|^2 + \frac{d_2}{2} \| \gd v \|^2 - D_2 \inpt{z, \gd v}  - D_2 \|\nabla v\|^2,
$$
where
\begin{align*}
    \inpt{u^2 v, \gd v} & = - \int_{\gw} \, u^2 | \nabla v |^2 \, dx - 2 \int_{\gw} \, uv (\nabla u \cdot \nabla v) \, dx \\
    & = - \int_{\gw} \, \left| u \nabla v + v \nabla u \right|^2 \, dx + \int_{\gw} \, v^2 | \nabla u |^2 \, dx  \leq \int_{\gw} \, v^2 | \nabla u |^2 \, dx.
\end{align*}
Consequently we get
\begin{equation} \label{nbv}
    \frac{d}{dt} \| \nabla v \|^2 + d_2 \| \gd v \|^2 \leq 2\int_{\gw} \, v^2 | \nabla u |^2 \, dx + \frac{b^2}{d_2} \| u \|^2 - 2D_2 (\|\nabla v \|^2 + \inpt{z, \gd v}).
\end{equation}
Similarly, we can get the following inequality for the $z$-component,
\begin{equation} \label{nbz}
    \frac{d}{dt} \| \nabla z \|^2 + d_2 \| \gd z \|^2 \leq 2\int_{\gw} \, z^2 | \nabla w |^2 \, dx + \frac{b^2}{d_2} \| w \|^2 - 2D_2 (\|\nabla z \|^2 + \inpt{v, \gd z}).
\end{equation}

We can also establish the inequality similar to \eqref{nbv} but with integrals over the set $\gw_{|v|, M} = \gw (| v(t) | < M)$ and the inequality similar to \eqref{nbz} but with integrals over the set $\gw_{|z|, M} = \gw (| z(t) | < M)$. Then sum up the two to obtain 
\begin{equation} \label{nbvz}
	\begin{split}
	\frac{d}{dt} &\left(\|\nabla v \|_{\gw_{|v|, M}}^2 + \|\nabla z \|_{\gw_{|z|, M}}^2\right) + d_2 \left(\|\gd v \|_{\gw_{|v|, M}}^2 + \|\gd z \|_{\gw_{|z|, M}}^2\right) \\[3pt]
	& \leq \frac{b^2}{d_2} \left(\| u \|_{\gw_{|v|, M}}^2 + \| w \|_{\gw_{|z|, M}}^2\right) + 2\int_{\gw_{|v|, M}} v^2 |\nabla u |^2 \, ds + 2 \int_{\gw_{|z|, M}} z^2 |\nabla w |^2 \, dx \\[3pt]
	& - 2D_2 \left(\|\nabla v \|_{\gw_{|v|, M}}^2 + \|\nabla z \|_{\gw_{|z|, M}}^2 + \langle z, \gd v\rangle_{\gw_{|v|, M}} + \langle v, \gd z\rangle_{\gw_{|z|, M}}\right) \\[3pt]
	& \leq \frac{b^2}{d_2} K_0 + 2 M^2 \left( \|\nabla u \|_{\gw_{|v|, M}}^2 + \|\nabla w \|_{\gw_{|z|, M}}^2 \right) \\[3pt]
	& + \frac{2D_2^2}{d_2} \| z \|_{\gw_{|v|, M}}^2 + \frac{d_2}{2} \| \gd v \|_{\gw_{|v|, M}}^2+ \frac{2D_2^2}{d_2} \| v \|_{\gw_{|z|, M}}^2 + \frac{d_2}{2} \| \gd z \|_{\gw_{|z|, M}}^2.
	\end{split}
\end{equation}
Since $\| z \|_{\gw_{|v|, M}}^2 + \| v \|_{\gw_{|z|, M}}^2 \leq \|S(t)g_0 \|^2 \leq K_0$ due to \eqref{bkbd}, and by Poincar\'{e} inequality, it follows that
\begin{equation} \label{nnbvz}
	\begin{split}
	\frac{d}{dt} &\left(\|\nabla v \|_{\gw_{|v|, M}}^2 + \|\nabla z \|_{\gw_{|z|, M}}^2\right) + \frac{\ga d_2}{2} \left(\|\nabla v \|_{\gw_{|v|, M}}^2 + \|\nabla z \|_{\gw_{|z|, M}}^2\right) \\
	& \leq \frac{K_0}{d_2} (b^2 + 2 D_2^2) + 2 M^2 \left( \|\nabla u \|_{\gw_{|v|, M}}^2 + \|\nabla w \|_{\gw_{|z|, M}}^2 \right), \quad t > T_0, \, g_0 \in B_0.
	\end{split}
\end{equation}

This inequality \eqref{nnbvz} implies that
\begin{equation} \label{Grw}
    \frac{d \gb}{dt} \leq r \gb + h, \quad t > T_0,
\end{equation}
where
\begin{align*}
    \gb (t) & = \|\nabla v \|_{\gw_{|v|, M}}^2 + \|\nabla z \|_{\gw_{|z|, M}}^2, \quad r (t)  = \frac{1}{2} \ga d_2, \quad \textup{and} \\
    h(t) & = \frac{K_0}{d_2} (b^2 + 2 D_2^2) + 2 M^2 \left( \|\nabla u \|_{\gw_{|v|, M}}^2 + \|\nabla w \|_{\gw_{|z|, M}}^2 \right).
\end{align*}
By \eqref{vztt}, there exists a constant $T_2 = T_2 (K_0) > 0$ such that $T_2 \geq T_0$ and 
\begin{equation} \label{tbt}
		\begin{split}
    \int_{t}^{t+1} &\left(\| \nabla v(s) \|_{\gw_{|v|, M}}^2 + \| \nabla z(s) \|_{\gw_{|z|, M}}^2 \right) ds \leq \int_{t}^{t+1} \left(\| \nabla v(s) \|^2 + \| \nabla z(s) \|^2\right) ds  \\
    & \leq C_7 = \frac{b^2 | \gw |}{d_2}\left(1 + \frac{1}{\ga d_2}\right), \quad \textup{for} \; t > T_2, \; g_0 \in B_0.
    \end{split}
\end{equation}
By integrating the inequality \eqref{yiq} on the time interval $[t, t + 1]$ and using \eqref{tbt} and \eqref{ysup}, we can deduce that there exists $T_3 = T_3 (K_0) > 0$ such that $T_3 \geq T_2$ and
\begin{equation} \label{tyt}
		\begin{split}
    d_1 \int_{t}^{t+1} \, &\| \nabla y(s) \|^2 \, ds \leq \| y (t) \|^2 + \frac{2|d_1 - d_2 |^2}{d_1} C_7 + 4K_0 + \left( \frac{4b^2}{\ga d_2} + 8a^2 \right) | \gw | \\
    & \leq R_1 + \frac{2|d_1 - d_2 |^2}{d_1} C_7 + 4K_0 + \left( \frac{4b^2}{\ga d_2} + 8a^2 \right) | \gw |, \quad \; t > T_3,
    \end{split}
\end{equation}
where $R_1$ is the constant given in \eqref{ysup}.

Similarly, doing the same to \eqref{psiq} and using \eqref{tbt} and \eqref{pssup}, we find that there exists $T_4 = T_4 (K_0) > 0$ such that $T_4 \geq T_2$ and
\begin{equation} \label{tpst}
		\begin{split}
    d_1 \int_{t}^{t+1} \, &\| \nabla \psi (s) \|^2 \, ds \leq \|\psi (t) \|^2 + \frac{2|d_1 - d_2 |^2}{d_1} C_7 + 2 |1 + 2(D_1 - D_2)|^2 \left(K_0 + \frac{b^2}{2\ga d_2}|\gw|\right) \\
    & \leq R_2 + \frac{2|d_1 - d_2 |^2}{d_1} C_7 + 2 |1 + 2(D_1 - D_2)|^2 \left(K_0 + \frac{b^2}{2\ga d_2}|\gw|\right), \quad \; t > T_4,
    \end{split}
\end{equation}
where $R_2$ is the constant given in \eqref{pssup}.

From \eqref{tbt}, \eqref{tyt} and \eqref{tpst} it follows that, for $t > \max \{T_3, T_4\}$ and any $g_0 \in B_0$, 
\begin{equation} \label{tut}
		\begin{split}
		\int_t^{t+1} \, & \|\nabla u (s)\|_{\gw_{|v|,M}}^2\, ds = \int_t^{t+1} \left\|\frac{1}{2} (y(s) + \psi (s)) - \nabla v(s) \right\|^2\, ds \\
		& \leq \int_t^{t+1} \|\nabla y (s)\|^2\, ds + \int_t^{t+1} \|\nabla \psi (s)\|^2\, ds + 2\int_t^{t+1} \, \|\nabla v (s)\|^2\, ds \leq C_8, 
		\end{split}
\end{equation}
where
\begin{align*}
    C_8 &= 2C_7 \left(1 + \frac{2|d_1 - d_2 |^2}{d_1^2} \right) + \frac{1}{d_1} (R_1 + R_2) + \frac{2K_0}{d_1} \left(2 + |1 + 2(D_1 - D_2)|^2\right) \\
    & + \frac{|\gw|}{d_1} \left[\left(\frac{4b^2}{\ga d_2} + 8a^2 \right) + |1 + 2(D_1 - D_2)^2| \frac{b^2}{\ga d_2}\right].
\end{align*}
We can also assert that
\begin{equation} \label{twt}
		\int_t^{t+1} \, \|\nabla w (s)\|_{\gw_{|z|,M}}^2\, ds = \int_t^{t+1} \left\|\frac{1}{2} (y(s) - \psi (s)) - \nabla z(s) \right\|^2\, ds \leq C_8.
\end{equation}
According to \eqref{tut} and \eqref{twt}, we have 
\begin{equation} \label{tht}
    \int_{t}^{t+1} h(s)\, ds \leq 4 M^2 C_8 + \frac{K_0}{d_2} (b^2 + 2D_2^2),  \quad \textup{for} \; t \geq \max \{T_3, T_4\},\; g_0 \in B_0.
\end{equation}
Besides we have $\int_t^{t+1} r(s)\, ds \leq \ga d_2$.

Finally by \eqref{tbt} and \eqref{tht} and applying the uniform Gronwall inequality \cite{rT88, SY02} to \eqref{Grw}, we obtain 
\begin{equation} \label{cmpt}
    \| \nabla v(t) \|_{\gw_{|v|, M}}^2 + \| \nabla z(t) \|_{\gw_{|z|, M}}^2\leq \left(C_7 + 4 M^2 C_8 + \frac{K_0}{d_2} (b^2 + 2D_2^2)\right) e^{\ga d_2} \quad  t > T_5, \, g_0 \in B_0,
\end{equation}
where $T_5 = \max \{T_3, T_4\} + 1$. Note that the right-hand side of \eqref{cmpt} is a uniform constant depending on the constant $K_0$ in \eqref{bk} and the arbitrarily fixed constant $M$ only. The inequality \eqref{cmpt} shows that for any given $t > T_5$,
\begin{align*}
    P_v &(S(t)B_0)_{\gw(|v(t)| < M)}  \;  \textup{is a bounded set in} \; H_{0}^{1}(\gw),  \, \textup{and} \\
    P_z &(S(t)B_0)_{\gw(|z(t)| < M)}  \;  \textup{is a bounded set in} \; H_{0}^{1}(\gw).
\end{align*}
Due to the compact Sobolev embedding $H_{0}^{1}(\gw) \hookrightarrow L^2 (\gw)$ for space dimension $n \leq 3$, it shows that for any given $t > T_5$,
\begin{align*}
    P_v &(S(t)B_0)_{\gw(|v(t)| < M)}  \;  \textup{is a precompact set in} \; L^2 (\gw), \, \textup{and} \\
    P_z &(S(t)B_0)_{\gw(|z(t)| < M)}  \;  \textup{is a precompact set in} \; L^2 (\gw),
\end{align*}
By the first property of the $\gk$-measure listed in Section 1, \eqref{kpv} and \eqref{kpz} are proved. 
\end{proof}

This lemma shows that the conditions in the item (iii) of Lemma \ref{L:ndc} are verified for the $v$-component in \eqref{uvk} and for the $z$-component in \eqref{wzk}. 

\section{\textbf{$\gk$-Contracting Property for the $(u, w)$ Components}}

\vspace{3pt}

In this section, we shall check that the conditions specified in the items (ii) and (iii) of Lemma \ref{L:ndc} for the $(u, w)$ components of the coupled Brusselator equations.

\begin{lemma} \label{L:uwgm}
For any \,$\ve > 0$, there exist positive constants  $M_2 = M_2(\ve)$ and $T_6 = T_6 (\ve)$ such that the $u$-component $u (t) = u(t, x, g_0)$ and the $w$-component $w(t) = w (t, x, g_0)$ of the solutions of the coupled Brusselator equations \eqref{equ}--\eqref{eqz} satisfy the following estimate,
\begin{equation} \label{uwgm}
    \int_{\gw (|v (t)| \leq M_2)} \, | u (t) |^2 \, dx +  \int_{\gw (|z (t)| \leq M_2)} \, | w (t) |^2 \, dx < L_2 \, \ve, \quad \text{for} \; \, t > T_6, \; g_0 \in B_0,
\end{equation}
where $L^2$ is a uniform positive constant.
\end{lemma}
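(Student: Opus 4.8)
Since the set $\gw(|v(t)|<M_2)$ (resp. $\gw(|z(t)|<M_2)$) is already handled by the asymptotic compactness in item (iii) of Lemma \ref{L:ndc}, the role of this lemma is to dominate the $L^2$ mass of $u$ (resp. $w$) on the complementary region where the companion component $v$ (resp. $z$) is large; that is, to establish the tail bound $\int_{\gw(|v(t)|\geq M_2)}|u(t)|^2\,dx + \int_{\gw(|z(t)|\geq M_2)}|w(t)|^2\,dx < L_2\,\ve$ demanded by condition \eqref{uwm}. The plan is to bypass the cubic autocatalytic source $u^2v$ in the $u$-equation by passing to the grouped variables $y=u+v+w+z$ and $\psi=u+v-w-z$, whose governing equations \eqref{eqy} and \eqref{eqps} contain \emph{no} cubic term. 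Since $u=\tfrac12(y+\psi)-v$ and $w=\tfrac12(y-\psi)-z$, and the tails of $v,z$ are already controlled by Lemma \ref{L:vzgm}, it suffices to produce a tail estimate for $y$ and $\psi$ and then transfer it to $u,w$ on the small-measure sets $\gw(|v|\geq M_2)$, $\gw(|z|\geq M_2)$.

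First I would record, via Chebyshev and the absorbing bound \eqref{bkbd}, that $m(\gw(|v(t)|\geq M))\leq K_0/M^2$ and $m(\gw(|z(t)|\geq M))\leq K_0/M^2$, both tending to $0$ as $M\to\infty$ for all $t>T_0$ and $g_0\in B_0$. Next, mirroring the cutoff argument of Lemma \ref{L:vzgm}, I would test \eqref{eqy} against $(y-N)_+$ and against $(y+N)_-$, and \eqref{eqps} against the analogous cutoffs of $\psi$. The linear term $-y$ in \eqref{eqy} yields, after transposition, the dissipative contribution $\langle y,(y-N)_+\rangle\geq\|(y-N)_+\|^2$; the lower-order forcing $(v+z)+2a$ is absorbed by Young's inequality together with the small measure $m(\gw(|y|\geq N))\leq\|y\|^2/N^2$; and the second-order forcing $(d_2-d_1)\gd(v+z)$ is moved onto $\nabla(y-N)_+$ by Green's formula and controlled through the time-averaged gradient bound \eqref{vztt}. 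Applying the uniform Gronwall inequality then gives $\|(y(t)-N)_+\|^2+\|(y(t)+N)_-\|^2\to 0$ as $N\to\infty$, uniformly for large $t$ and $g_0\in B_0$, and likewise for $\psi$; a threshold refinement with $kN$ as in \eqref{vzd} upgrades this to $\int_{\gw(|y|\geq N)}|y|^2\,dx\to0$ and $\int_{\gw(|\psi|\geq N)}|\psi|^2\,dx\to0$.

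To assemble the estimate I would use $|u|^2\leq C(|y|^2+|\psi|^2+|v|^2)$ and split $\gw(|v|\geq M_2)$ according to whether $|y|,|\psi|\geq N$ or $<N$: on the first part the $y,\psi$ tails just obtained are small, on the second part the integrand is dominated by $N^2\,m(\gw(|v|\geq M_2))\leq N^2K_0/M_2^2$, and $\int_{\gw(|v|\geq M_2)}|v|^2$ is small by Lemma \ref{L:vzgm}; choosing $N$ first, then $M_2\gg N$, then $T_6$ large produces \eqref{uwm} with a uniform constant $L_2$, and symmetrically for $w$ over $\gw(|z|\geq M_2)$. The main obstacle, and the reason the grouping is essential, is the second-order forcing $(d_2-d_1)\gd(v\pm z)$ produced by the unequal diffusion rates $d_1\neq d_2$: a naive cutoff estimate does not close because this term is not pointwise controllable, so one must combine the cancellation of the cubic terms in $y,\psi$ with the restricted higher-regularity bounds \eqref{tyt}--\eqref{tpst} and the vanishing measure of the large-$v$ and large-$z$ sets in concert.
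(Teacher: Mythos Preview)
Your overall strategy---pass to the grouped variables $y=u+v+w+z$ and $\psi=u+v-w-z$ to eliminate the cubic terms, then recover $u,w$ via $u=\tfrac12(y+\psi)-v$ and $w=\tfrac12(y-\psi)-z$---is exactly the paper's. The difference is in \emph{where} you try to obtain smallness of $y$ and $\psi$.

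The paper does \emph{not} prove a tail bound for $y$ in its own variable (it never shows $\int_{\{|y|\geq N\}}|y|^2\to 0$). Instead it estimates $\|y\|^2_{\Omega_M^{|v|}}$ directly: it tests \eqref{eqy} against $y$ over the small-measure set $\Omega_M^{|v|}=\{|v(t)|\geq M\}$, multiplies by $e^t$, and integrates in time, arriving at \eqref{ygi}. The crucial ingredient is then a \emph{restricted} analogue of \eqref{evz}, namely \eqref{vzgi}, which bounds $d_2\int_0^t e^s(\|\nabla v\|^2_{\Omega_M^{|v|}}+\|\nabla z\|^2_{\Omega_M^{|v|}})\,ds$ by a main term proportional to $|\Omega_M^{|v|}|\,e^t$ plus transients. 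After multiplying by $e^{-t}$, every contribution is either $O(|\Omega_M^{|v|}|)=O(\ve)$ by \eqref{mle} or decays in $t$. The same is carried out for $\psi$ on both $\Omega_M^{|v|}$ and $\Omega_M^{|z|}$, and the decomposition $u=\tfrac12(y+\psi)-v$ together with Lemma~\ref{L:vzgm} finishes.

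Your cutoff-in-$y$ route has a gap precisely at the step you yourself flag as the obstacle. When you test \eqref{eqy} against $(y-N)_+$ and Young the term $(d_2-d_1)\int\nabla(v+z)\cdot\nabla(y-N)_+$, the residual forcing is (at best) $\|\nabla(v+z)\|^2_{\{y\geq N\}}$; you propose to control it with \eqref{vztt}. But \eqref{vztt} is the \emph{full} bound $\int_t^{t+1}(\|\nabla v\|^2+\|\nabla z\|^2)\,ds\leq C$, which does not vanish as $N\to\infty$, and nothing available gives uniform integrability of $|\nabla v|^2$ or $|\nabla z|^2$ over shrinking sets. The uniform Gronwall step therefore produces only $\|(y(t)-N)_+\|^2\leq$ a fixed constant, not a quantity tending to $0$ with $N$, so your conclusion $\int_{\{|y|\geq N\}}|y|^2\to 0$ does not follow. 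To close your argument you would need a restricted gradient estimate on $\{|y|\geq N\}$ analogous to \eqref{vzgi}; that requires redoing the $v,z$ energy identity \eqref{vziq} on a domain dictated by $y$, which is exactly what the paper sidesteps by working on $\{|v|\geq M\}$ from the outset---there the $b^2/4$ source in \eqref{vziq} localizes immediately to $\tfrac{b^2}{4}|\Omega_M^{|v|}|$ and the whole chain becomes $O(\ve)$.
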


\begin{proof}
Recall that in our notation $\gw (| v(t) | \geq M)$ and $\gw (| z(t) | \geq M)$ will be denoted by $\gw_M^{| v |}$ and $\gw_M^{| z |}$, respectively. Taking the $L^2$-inner-product $\inpt{\eqref{eqy}, y (t)}$ over the subset $\gw_{M}^{|v|}$, we get
\begin{align*}
    \frac{1}{2} &\frac{d}{dt} \nvgm{y(t)} + d_1\nvgm{\nabla  y(t)} + \nvgm{y(t)}  \\
    & = \int_{\gw_{M}^{|v|}} (d_2 - d_1) y(t) \gd (v(t) + z(t)) \, dx + \int_{\gw_{M}^{|v|}}\, (v(t) + z(t) + 2a)y(t)\,dx \\
    & \leq \frac{d_1}{2} \nvgm{\nabla  y(t)} + \frac{| d_1 - d_2 |^2}{2 d_1} \nvgm{\nabla (v(t) + z(t))} + \frac{1}{2} \nvgm{y(t)} + \frac{1}{2} \nvgm{v(t) + z(t) + 2a},
\end{align*}
so that
\begin{equation} \label{ygd}
	\begin{split}
     \frac{d}{dt} &\nvgm{y(t)} + d_1 \nvgm{\nabla y(t)} + \nvgm{y(t)} \\[3pt]
    & \leq \frac{| d_1 - d_2 |^2}{ d_1} \nvgm{\nabla (v(t) + z(t))} + \nvgm{v(t) + z(t)+ 2a} \\
    & \leq \frac{2| d_1 - d_2 |^2}{ d_1}\left( \nvgm{\nabla v(t)} +  \nvgm{\nabla z(t)} \right) + 4\left(\nvgm{v(t)} + \nvgm{z(t)}\right)+ 8 a^2 |\gw_{M}^{|v|} |. 
    	\end{split}
\end{equation}
Multiply the inequality \eqref{ygd} by $e^t$ and then integrate it on the interval $[0, t]$ to obtain
\begin{equation} \label{ygi}
	\begin{split}
    \nvgm{y(t)} &=  \int_{\gw_{M}^{|v|}} \, | y(t) |^2 \, dx \leq e^{-t}  \nvgm{u_0 + v_0 + w_0 + z_0} \\[3pt]
    & + \frac{2| d_1 - d_2 |^2}{ d_1} e^{-t} \int_{0}^{t} \, e^s \left(\nvgm{\nabla v(s)} + \nvgm{\nabla v(s)}\right)\, ds  \\
    & + e^{-t} \int_{0}^{t} \, e^s \left(4\nvgm{v(s)} + 4\nvgm{z(s)} + 8a^2 |\gw_{M}^{|v|} | \right) \, ds, \quad t \geq 0, \, g_0 \in B_0.  
      	\end{split}
\end{equation}
On the other hand, similar to \eqref{evz}, we have
\begin{equation} \label{vzgi}
	\begin{split}
    d_2 &\int_{0}^{t} \, e^s \left(\nvgm{\nabla v(s)} + \nvgm{\nabla z(s)}\right)\, ds  \\
    & \leq \left(1 + \frac{1}{2\ga d_2} \right) b^2 |\gw_{M}^{|v|} | e^t + \left( 1 + \alpha (t) e^t \right) K_0, \quad t \geq 0, \, g_0 \in B_0. 
    	\end{split}
\end{equation}
Let $\ve > 0$ be arbitrarily given as in Section 4. Since \eqref{mle} implies $|\gw_{M}^{|v|} | < \ve /2$ and $\alpha (t) \to 0$ as $t \to \infty$, and by \eqref{vzgi}, there exists a sufficiently large $\tau_1 = \tau_1 (\ve) \geq T_0$, such that for $t > \tau_1$, the following inequalities hold,
$$
    e^{-t} \nvgm{u_0 + v_0 + w_0 + z_0} \leq 4 K_0 e^{-t} < \ve,
$$
$$
   \frac{2| d_1 - d_2 |^2}{ d_1}  e^{-t} \int_{0}^{t} \, e^s \left( \nvgm{\nabla v(s)} + \nvgm{\nabla z(s)}\right)\, ds < \frac{2| d_1 - d_2 |^2}{ d_1 d_2} \left(1 + \frac{1}{2 d_2 \ga} \right) b^2 \ve,
$$
and
\begin{align*}
	 & e^{-t} \int_{0}^{t} \, e^s \left(4\nvgm{v(s)} + 4\nvgm{z(s)} + 8a^2 |\gw_{M}^{|v|} | \right) \, ds \\
	 & \leq 4 e^{-t} \int_0^t \frac{e^s}{\ga} \left(\nvgm{\nabla v(s)} + \nvgm{\nabla z(s)} \right) ds  +8a^2 |\gw_{M}^{|v|}  <  \frac{4}{\ga d_2} \left(1 + \frac{1}{2 \ga d_2} \right) b^2 \ve + 4 a^2 \ve.
\end{align*}
Substituting these inequalities into \eqref{ygi}, we have
\begin{equation} \label{ygv}
   	\nvgm{y(t)} = \int_{\gw(|v(t)| \geq M)} | y(t) |^2 \, dx  < \Gamma_1 \, \ve, \quad \text{for} \; \; t \geq \tau_1, \; g_0 \in B_0, 
\end{equation}
where $\Gamma_1$ is a uniform constant given by
$$
	\Gamma_1 = 1 + 4a^2 + b^2 \left(1 + \frac{1}{2\ga d_2}\right) \left(\frac{2|d_1 - d_2|^2}{d_1 d_2} + \frac{4}{\ga d_2}\right).
$$
Change $\gw_{M}^{|v|}$ to $\gw_{M}^{|z|}$ and follow the parallel steps as shown above. Then the corresponding inequality is also valid,
\begin{equation} \label{ygz}
   	\nzgm{y(t)} = \int_{\gw(|z(t)| \geq M)} | y(t) |^2 \, dx  < \Gamma_1 \, \ve, \quad \text{for} \; \; t \geq \tau_1, \; g_0 \in B_0. 
\end{equation}

Next taking the inner-product $\inpt{\eqref{eqps}, \psi (t)}$ over the subset $\gw_{M}^{|z|}$, similar to \eqref{ygd} we get
\begin{equation} \label{psgd}
	\begin{split}
     \frac{d}{dt} &\nzgm{\psi (t)} + d_1 \nzgm{\nabla \psi (t)} + \nzgm{\psi (t)} \\[3pt]
    & \leq \frac{| d_1 - d_2 |^2}{ d_1} \nzgm{\nabla (v(t) - z(t))} + |1 + 2(D_1 - D_2)|^2 \nzgm{v(t) - z(t)} \\
    & \leq \frac{2| d_1 - d_2 |^2}{ d_1}\left( \nzgm{\nabla v(t)} +  \nzgm{\nabla z(t)} \right) \\
    & + 2|1 + 2(D_1 - D_2)|^2 \left(\nzgm{v(t)} + \nzgm{z(t)}\right). 
    	\end{split}
\end{equation}
Multiply the inequality \eqref{psgd} by $e^t$ and then integrate it on the interval $[0, t]$. Then by conducting similar estimates we can confirm that, for $t \geq 0$ and $g_0 \in B_0$,
\begin{equation} \label{psgi}
	\begin{split}
    \nzgm{\psi (t)} &=  \int_{\gw_{M}^{|z|}} \, | \psi (t) |^2 \, dx \leq e^{-t}  \nzgm{u_0 + v_0 - w_0 - z_0} \\
    & + \frac{2| d_1 - d_2 |^2}{ d_1} e^{-t} \int_{0}^{t} \, e^s \left(\nzgm{\nabla v(s)} + \nzgm{\nabla v(s)}\right)\, ds  \\
    & + 2 |1 + 2(D_1 - D_2)|^2 e^{-t} \int_{0}^{t} \, e^s \left(\nzgm{v(s)} + \nzgm{z(s)} \right) \, ds.  
      	\end{split}
\end{equation}
Parallel to the argument from \eqref{ygi} through \eqref{ygv}, we can assert that there exists a sufficiently large $\tau_2 = \tau_2 (\ve) \geq T_0$, such that
\begin{equation} \label{psgz}
   	\nzgm{\psi (t)} = \int_{\gw(|z(t)| \geq M)} | \psi (t) |^2 \, dx  < \Gamma_2 \, \ve, \quad \text{for} \; \; t \geq \tau_2, \; g_0 \in B_0, 
\end{equation}
where $\Gamma_2$ is a uniform constant given by
$$
	\Gamma_2 = 1 + 2b^2 \left(1 + \frac{1}{2\ga d_2}\right) \left(\frac{|d_1 - d_2|^2}{d_1 d_2} + \frac{|1 + 2(D_1 - D_2)|^2}{\ga d_2}\right).
$$
Change $\gw_{M}^{|z|}$ to $\gw_{M}^{|v|}$ and follow the parallel steps in proving \eqref{psgz}. Then the corresponding inequality is also valid:
\begin{equation} \label{psgv}
   	\nvgm{\psi (t)} = \int_{\gw(|v(t)| \geq M)} | \psi (t) |^2 \, dx  < \Gamma_2 \, \ve, \quad \text{for} \; \; t \geq \tau_2, \; g_0 \in B_0. 
\end{equation}

Finally, let $M_2 = \max \{M, M_1\}, T_6 = \max \{\tau_1, \tau_2 \}$, where $M$ is given in \eqref{mle} and $M_1$ is the constant in Lemma \ref{L:vzgm} and \eqref{vzgm}. Now we can combine the established \eqref{ygv}, \eqref{psgv}, and the earlier obtained result \eqref{vzgm} to conclude that
\begin{equation} \label{fnu}
	\begin{split}
    	& \int_{\gw(|v(t)| \geq M_2)} | u(t) |^2 \, dx  \leq 2\left(\|u(t) + v(t) \|_{\gw_{M}^{|v|}}^{2}  + \| v(t) \|_{\gw_{M_1}^{|v|}}^{2}\right)  \\[5pt]
    	& = 2 \left\|\frac{1}{2} (y (t) + \psi (t)) \right\|_{\gw_{M}^{|v|}}^{2} + 2\| v(t) \|_{\gw_{M_1}^{|v|}}^{2} \leq \|y (t) \|_{\gw_{M}^{|v|}}^{2} + \|\psi (t) \|_{\gw_{M}^{|v|}}^{2} + 2\| v(t) \|_{\gw_{M_1}^{|v|}}^{2} \\[5pt]
	& < \left(\Gamma_1 + \Gamma_2 + \frac{8b^2}{\ga d_2}\right)\ve, \quad \quad \textup{for} \; t \geq T_6,\;  g_0 \in B_0.
    	\end{split}
\end{equation}
And we can also combine \eqref{ygz}, \eqref{psgz}, and \eqref{vzgm} to conclude that 
\begin{equation} \label{fnw}
	\begin{split}
    	& \int_{\gw(|z(t)| \geq M_2)} | w(t) |^2 \, dx  \leq 2\left(\|w(t) + z(t) \|_{\gw_{M}^{|z|}}^{2}  + \| z(t) \|_{\gw_{M_1}^{|z|}}^{2}\right)  \\[5pt]
    	& = 2 \left\|\frac{1}{2} (y (t) - \psi (t)) \right\|_{\gw_{M}^{|z|}}^{2} + 2\| z(t) \|_{\gw_{M_1}^{|z|}}^{2} \leq \|y (t) \|_{\gw_{M}^{|z|}}^{2} + \|\psi (t) \|_{\gw_{M}^{|z|}}^{2} + 2\| z(t) \|_{\gw_{M_1}^{|z|}}^{2} \\[5pt]
	& < \left(\Gamma_1 + \Gamma_2 + \frac{8b^2}{\ga d_2}\right)\ve, \quad \quad \textup{for} \; t \geq T_6,\;  g_0 \in B_0.
    	\end{split}
\end{equation}
Therefore, \eqref{uwgm} is proved with
$$
	L_2 = 2\left(\Gamma_1 + \Gamma_2 + \frac{8b^2}{\ga d_2}\right).
$$
The proof is complete.
\end{proof}

This lemma shows that the condition \eqref{uwm} in the item (ii) of Lemma \ref{L:ndc} is satisfied for any given $M \geq M_2 (\ve)$ and any $T \geq T_6 (\ve)$, where $M_2$ and $T_6$ are given in Lemma \ref{L:uwgm}.

Let $P_u: H \rightarrow L^2(\gw)_u$ and $P_w: H \rightarrow L^2(\gw)_w$ be the orthogonal projections from $H$ onto the $u$-component space and the $w$-component space, respectively. The next lemma is to check the condition \eqref{uvk} and \eqref{wzk} for the $(u, w)$ components in item (iii) of Lemma \ref{L:ndc}.

\begin{lemma} \label{L:uwlk}
For any given $M > 0$, it holds that 
\begin{align} 
    \gk &\left( P_u (S(t)B_0)_{\gw(|v(t)| < M)} \right) \longrightarrow 0, \; \; \text{as} \;\, t \to \infty, \label{kpu} \\
    \gk &\left( P_w (S(t)B_0)_{\gw(|z(t)| < M)} \right) \longrightarrow 0, \; \; \text{as} \;\, t \to \infty, \label{kpw}
\end{align}
in the space $L^2 (\gw)$, where $(S(t)B_0)_{\gw(|v(t)| < M)}$ and$(S(t)B_0)_{\gw(|z(t)| < M)}$ have been specified in \eqref{tem} and \eqref{xim}.
\end{lemma}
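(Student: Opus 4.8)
The plan is to repeat the scheme of Lemma \ref{L:vzlk}: to show that for every large $t$ the set in \eqref{kpu} is, up to a piece that is uniformly small in $L^2(\gw)$, a bounded subset of $H_0^1(\gw)$, and then to conclude from the compact embedding $H_0^1(\gw)\hookrightarrow L^2(\gw)$ (valid for $n\le 3$), the first property of the $\gk$-measure listed in Section 1, and the subadditivity of $\gk$. The statement \eqref{kpw} will follow by the verbatim symmetric argument with the two cells interchanged, so I describe only the $u$-component.

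The lever is once more the grouping identity $u=\tfrac12(y+\psi)-v$, in which $y=u+v+w+z$ and $\psi=u+v-w-z$ satisfy the \emph{nonlinearity-free} linear equations \eqref{eqy} and \eqref{eqps}. Fix the given $M>0$, let $\ve>0$, and choose a large threshold $M'=M'(\ve)\ge M$; write $\theta_M,\xi_{M'}$ for the characteristic functions of $\gw(|v(t)|<M)$ and $\gw(|z(t)|<M')$ and split $u(t)\theta_M=u(t)\theta_M\xi_{M'}+u(t)\theta_M(1-\xi_{M'})$. By subadditivity of $\gk$ it is enough to control each of the two families $\{u(t)\theta_M\xi_{M'}:g_0\in B_0\}$ and $\{u(t)\theta_M(1-\xi_{M'}):g_0\in B_0\}$. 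On the support of $\theta_M(1-\xi_{M'})$ one has $|v(t)|<M$ and $|z(t)|\ge M'$, so using $|u|^2\le|y|^2+|\psi|^2+2|v|^2$ I would bound $\|u(t)\theta_M(1-\xi_{M'})\|^2$ by $\|y(t)\|^2_{\gw(|z(t)|\ge M')}+\|\psi(t)\|^2_{\gw(|z(t)|\ge M')}+2M^2\,m(\gw(|z(t)|\ge M'))$; the first two pieces are $O(\ve)$ for $t$ large by the $L^2$-tail estimates \eqref{ygz}--\eqref{psgz}, and the third is at most $2M^2K_0/(M')^2=O(\ve)$ once $M'$ is large, by \eqref{bkbd}. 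Hence the second family is uniformly $O(\sqrt{\ve})$ in $L^2(\gw)$ and its $\gk$ is $O(\sqrt{\ve})$.

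For the first family I would show $\{u(t)\theta_M\xi_{M'}\}$ is bounded in $H_0^1(\gw)$ for all $t$ in some interval $(T,\infty)$. Differentiating the identity, $\nabla u=\tfrac12(\nabla y+\nabla\psi)-\nabla v$, so on $G:=\gw(|v(t)|<M)\cap\gw(|z(t)|<M')$,
$$
\|\nabla u(t)\|_{G}\le\tfrac12\|\nabla y(t)\|_{G}+\tfrac12\|\nabla\psi(t)\|_{G}+\|\nabla v(t)\|_{G},
$$
and $\|\nabla v(t)\|_{G}\le\|\nabla v(t)\|_{\gw(|v(t)|<M)}$ is already bounded for $t>T_5$ by \eqref{cmpt}. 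For $\|\nabla y(t)\|_{G}$ and $\|\nabla\psi(t)\|_{G}$ I would test \eqref{eqy} with $-\gd y$ and \eqref{eqps} with $-\gd\psi$, localized over $G$ by the truncation technique used in \eqref{nbvz}; since neither equation carries a cubic term, the outcome has the form $\frac{d\gb}{dt}\le r\gb+h$ as in \eqref{Grw}, with $\gb$ equal to $\|\nabla y(t)\|^2_{G}$ (respectively $\|\nabla\psi(t)\|^2_{G}$) and $h$ dominated by $\|\gd v(t)\|^2_{\gw(|v(t)|<M)}+\|\gd z(t)\|^2_{\gw(|z(t)|<M')}$ plus terms bounded by $K_0$. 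The unit-interval bound $\int_t^{t+1}\|\nabla y(s)\|^2\,ds\le C$ required to apply the uniform Gronwall inequality \cite{rT88, SY02} is supplied by \eqref{tyt} (and \eqref{tpst} for $\psi$), while $\int_t^{t+1}\bigl(\|\gd v(s)\|^2_{\gw(|v(s)|<M)}+\|\gd z(s)\|^2_{\gw(|z(s)|<M')}\bigr)\,ds$ is bounded by integrating the localized second-order estimates behind \eqref{nnbvz}: there the only dangerous contribution, the cubic $\int_{\gw(|v|<M)}v^2|\nabla u|^2\,dx$, is dominated by $M^2\|\nabla u\|^2_{\gw(|v|<M)}$, whose unit-interval integral is $C_8$ by \eqref{tut}, and symmetrically for $z,w$ by \eqref{twt}. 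Feeding this into the uniform Gronwall inequality yields $\|\nabla y(t)\|^2_{G}+\|\nabla\psi(t)\|^2_{G}\le C$ for $t>T$ and $g_0\in B_0$, hence $\{u(t)\theta_M\xi_{M'}\}$ is bounded in $H_0^1(\gw)$, precompact in $L^2(\gw)$, and its $\gk$ tends to $0$. Combining the two families and letting $\ve\downarrow0$ proves \eqref{kpu}; localizing the $w$-component over $\gw(|z(t)|<M)\cap\gw(|v(t)|<M')$ and using \eqref{ygz}, \eqref{psgz}, \eqref{cmpt}, \eqref{twt} in the same way proves \eqref{kpw}.

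The main obstacle I anticipate is exactly this last step: producing a unit-interval bound on the second-order norms $\|\gd v\|^2$, $\|\gd z\|^2$ — equivalently on the cubic interaction $\int v^2|\nabla u|^2$ — with no a priori $H^1$ or $L^\infty$ control on the solution, which is where the coupled cubic-autocatalytic structure blocks the usual parabolic bootstrap. The remedy is structural rather than computational: these norms are never needed over all of $\gw$, only over the region where $|v|$ and $|z|$ are bounded by fixed constants, and there the cubic term collapses to $M^2\|\nabla u\|^2$ (respectively $(M')^2\|\nabla w\|^2$), while $\|\nabla u\|^2$ over a unit time interval is tied, through the nonlinearity-free variables $y,\psi$, to quantities already estimated in Sections 2 and 4. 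The one genuinely new point compared with the $(v,z)$-case of Lemma \ref{L:vzlk} is that the natural localization set $\gw(|v(t)|<M)$ for the $u$-component must be intersected with $\gw(|z(t)|<M')$ to keep $\|\gd z\|$ controlled, the small leftover $\gw(|v(t)|<M)\setminus G$ being absorbed by the $L^2$-tails \eqref{ygz}--\eqref{psgz}.
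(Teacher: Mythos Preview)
Your proposal is correct but takes a substantially more elaborate route than the paper. The paper argues directly on the $u$-equation: it takes $\langle\eqref{equ},-\gd u\rangle$ over the truncation set $\gw(|v(t)|<M)$, where the cubic term obeys $\int u^{2}v(-\gd u)\,dx\le M\int u^{2}|\gd u|\,dx\le \frac{M^{2}}{2d_{1}}\|u\|_{L^{4}}^{4}+\frac{d_{1}}{2}\|\gd u\|^{2}$, and then uses the Sobolev embedding $\|u\|_{L^{4}}^{4}\le\delta^{2}\|\nabla u\|^{4}$ to obtain an inequality of the form $\frac{d\gb}{dt}\le r\gb+h$ with $\gb=\|\nabla u\|_{\gw_{|v|,M}}^{2}$ and $r=\frac{M^{2}\delta^{2}}{d_{1}}\|\nabla u\|_{\gw_{|v|,M}}^{2}$. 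The unit-interval bound $\int_{t}^{t+1}\|\nabla u\|_{\gw_{|v|,M}}^{2}\,ds\le C_{8}$ from \eqref{tut} then feeds the uniform Gronwall inequality and yields a uniform $H_{0}^{1}$ bound on $u(t)\theta_{M}$ for $t>T_{5}$, finishing the proof in one step; the $w$-component is symmetric.

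By contrast, you decompose $u=\tfrac12(y+\psi)-v$, introduce an auxiliary threshold $M'$ and a secondary splitting $\theta_{M}=\theta_{M}\xi_{M'}+\theta_{M}(1-\xi_{M'})$, invoke the $L^{2}$-tail estimates of Section~5 on the complement, and on $G=\gw(|v|<M)\cap\gw(|z|<M')$ seek $H^{1}$ bounds on $y,\psi$ by testing \eqref{eqy}, \eqref{eqps} with $-\gd y,-\gd\psi$. This in turn forces you to control $\int_{t}^{t+1}(\|\gd v\|_{\gw_{|v|,M}}^{2}+\|\gd z\|_{\gw_{|z|,M'}}^{2})\,ds$, i.e.\ $H^{2}$-in-space, $L^{2}$-in-time information on $(v,z)$, which you extract by integrating the localized second-order estimates behind \eqref{nbvz}. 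All of this can indeed be carried out, but it is three layers of estimates where the paper needs one. What your route buys is that it never uses the $L^{4}$ embedding or the self-multiplicative structure $\|u\|_{L^{4}}^{4}\le\delta^{2}\|\nabla u\|^{2}\cdot\|\nabla u\|^{2}$; what the paper's direct route buys is that it avoids the auxiliary splitting in $M'$, the tail estimates of Section~5, and any second-order control on $(v,z)$.
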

\begin{proof}
Taking the inner-product $\inpt{\eqref{equ},- \gd u(t)}_{\gw_{|v|, M}}$, we can get

\begin{align*}
    \frac{1}{2} &\frac{d}{dt} \| \nabla u \|_{\gw_{|v|,M}}^2 + d_1 \| \gd u \|_{\gw_{|v|,M}}^2 = \int_{\gw_{|v|,M}} u^2 v (-\gd u) \, dx \\
    &+ (b + 1) \int_{\gw_{|v|,M}} u \gd u \, dx - \int_{\gw_{|v|,M}} a \gd u \, dx - D_1 \int_{\gw_{|v|,M}} (w - u) \gd u \, dx\\
    & \leq M \int_{\gw_{|v|,M}} u^2 | \gd u | \, dx + \frac{d_1}{4} \| \gd u \|_{\gw_{|v|,M}}^2 + \frac{1}{d_1} \| (b + 1)u - a \|_{\gw_{|v|,M}}^2 \\
    & + \frac{d_1}{4} \| \gd u \|_{\gw_{|v|,M}}^2 + \frac{D_1^2}{d_1} \| w - u \|_{\gw_{|v|,M}}^2 \\
    &\leq \frac{M^2}{2d_1} \int_{\gw_{|v|,M}} u^4 \, dx + d_1 \| \gd u \|_{\gw_{|v|,M}}^2 + \frac{D_1^2}{d_1} \| w - u \|_{\gw_{|v|,M}}^2 + \frac{1}{d_1} \| (b + 1)u - a \|_{\gw_{|v|,M}}^2.
\end{align*}
Note that for $n \leq 3$ the Sobolev embedding $H_{0}^{1} (\gw) \hookrightarrow L^4 (\gw)$ is continuous and there exists a uniform constant $\delta > 0$ such that
\begin{equation} \label{delta}
    \| \varphi \|_{L^4 (\gw)}^2 \leq \delta \| \varphi \|_{H_{0}^{1}(\gw)}^2, \quad \textup{for} \; \varphi \in H_{0}^{1}(\gw),
\end{equation}
which also holds if $\gw$ is replaced by $\gw_{|v|,M}$. Thus it follows that
\begin{equation} \label{Nbu}
	\begin{split}
    \frac{d}{dt} \| \nabla u \|_{\gw_{|v|,M}}^2 &\leq \frac{M^2}{d_1} \| u \|_{L^4(\gw_{|v|, M})}^4 + \frac{2D_1^2}{d_1} \| w - u \|_{\gw_{|v|,M}}^2 + \frac{4}{d_1} \left( (b + 1)^2 K_0 + a^2 |\gw_{v,M} | \right) \\
    &  \leq \frac{1}{d_1} \left(M^2 \delta^2 \| \nabla u \|^4 + 4K_0 ( (b + 1)^2  + D_1^2) + 4a^2 |\gw | \right) \quad \textup{for} \, t > T_0.
    	\end{split}
\end{equation}
The inequality \eqref{Nbu} can be written as
\begin{equation} \label{Gron}
    \frac{d \gb}{dt} \leq r \gb + h, \quad \textup{for} \; t > T_0,
\end{equation}
in which 
\begin{align*}
    \gb (t) & = \| \nabla u \|_{\gw_{|v|,M}}^2,  \quad r (t) = \frac{M^2 \delta^2}{d_1} \| \nabla u \|_{\gw_{|v|,M}}^2, \quad \textup{and} \\
    h (t) & = \frac{4}{d_1} \left( K_0 ((b + 1)^2 + D_1^2) + a^2 |\gw | \right). 
\end{align*}
In view of \eqref{tut}, we had proved that
$$
	\int_t^{t+1} \|\nabla u (s)\|_{\gw_{|v|,M}}^2\, ds \leq C_8, \quad \textup{for} \; t > T_5 = \max \{T_3 , T_4\} + 1,
$$
where $T_5$ is the same as given in the proof of Lemma \ref{L:vzlk}. Then we can apply the uniform Gronwall inequality to \eqref{Gron} to obtain
\begin{equation} \label{Nbum}
    \| \nabla u(t) \|_{\gw_{|v|, M}}^2 \leq \left( C_8 + \frac{4}{d_1} \left( K_0 ((b + 1)^2 + D_1^2) + a^2 |\gw | \right) \right)\exp \left(\frac{2M^2 \delta^2}{d_1} C_8 \right), 
\end{equation}
for $t > T_5$ and $g_0 \in B_0$. Inequality \eqref{Nbum} shows that 
$$
    P_u (S(t)B_0)_{\gw(|v(t)| < M)}  \;  \textup{is a bounded set in} \; H_{0}^{1}(\gw), \quad \textup{for any} \; t > T_5,
$$
so that 
$$
    P_u (S(t)B_0)_{\gw(|v(t)| < M)}  \;  \textup{is a precompact set in} \; L^2 (\gw),  \quad \textup{for any} \; t > T_5.
$$
Therefore, by the first property of the $\gk$-measure listed in Section 1, \eqref{kpu} is proved. 

Similarly, starting from the inner-product $\inpt{\eqref{eqw},- \gd w(t)}_{\gw_{|z|, M}}$ and using \eqref{twt}, we can also confirm that 
\begin{equation} \label{Nbwm}
    \| \nabla w(t) \|_{\gw_{|z|, M}}^2 \leq \left( C_8 + \frac{4}{d_1} \left( K_0 ((b + 1)^2 + D_1^2) + a^2 |\gw | \right) \right)\exp \left(\frac{2M^2 \delta^2}{d_1} C_8 \right), 
\end{equation}
for $t > T_5$ and $g_0 \in B_0$. Therefore, it holds that
$$
    P_w (S(t)B_0)_{\gw(|z(t)| < M)}  \;  \textup{is a precompact set in} \; L^2 (\gw),  \quad \textup{for any} \; t > T_5.
$$
Consequently \eqref{kpw} is proved. The proof is completed.
\end{proof}

This lemma shows that the conditions in the item (iii) of Lemma \ref{L:ndc} have also been verified for the $u$-component in \eqref{uvk} and for the $w$-component in \eqref{wzk}. 

\section{\textbf{The Existence of a Global Attractor and Its Finite Dimensionality}}

In this section we finally prove Theorem \ref{Mthm} (Main Theorem) on the existence of a global attractor, denoted by $\ms{A}$, for the coupled Brusselator semiflow $\csg$ and that $\ms{A}$ has finite Hausdorff and fractal dimensions.

\begin{proof}[Proof of Theorem \textup{1}]
In Lemma \ref{L:absb}, we have shown that this coupled Brusselator semiflow $\csg$ has a bounded absorbing set $B_0$ in $H$ and the condition (i) in Lemma \ref{L:ndc} is satisfied. 

By Lemma \ref{L:vzgm} and Lemma \ref{L:uwgm}, we have shown that $\csg$ satisfies \eqref{vzm} and \eqref{uwm} with $M = \max \{M_1, M_2\}$ and $T = \max \{T_1, T_6\}$, where $M_1, T_1$ and $M_2, T_6$ are given in Lemma \ref{L:vzgm} and Lemma \ref{L:uwgm} respectively, so that the conditions (ii) in Lemma \ref{L:ndc} is satisfied. 

By Lemma \ref{L:vzlk} and Lemma \ref{L:uwlk}, we proved that \eqref{uvk} and \eqref{wzk} in the condition (iii) of Lemmal \ref{L:ndc} are satisfied by $\csg$. Finally we apply Lemma \ref{L:ndc} to reach the conclusion that there exists a global attractor $\ms{A}$ in $H$ for this coupled Brusselator semiflow $\csg$.
\end{proof}

Let $\ms{A}$ be the global attractor of the semiflow $\csg$ in $H$. Let $q_{m} = \limsup_{t \to \infty} \, q_{m} (t)$, where
\begin{equation} \label{trq} 
	q_{m} (t) = \sup_{g_0 \in \ms{A}} \;  \, \sup_{\substack{g_{i} \in H, \|g_{i} \| = 1\\ i = 1, \cdots, m}} \; \, \left( \frac{1}{t} \int_{0}^{t} \textup{Tr}  \left( A + F^{\prime} (S(\tau) g_0 ) \right) \circ Q_{m} (\tau) \, d\tau \right),
\end{equation}
in which $\textup{Tr} \, (A + F^{\prime} (S(\tau)g_0))$ is the trace of the linear operator $A + F^{\prime} (S(\tau)g_0)$, with $F(g)$ being the nonlinear map in \eqref{eveq}, and $Q_{m} (t)$ stands for the orthogonal projection of the space $H$ on the subspace spanned by $G_1 (t), \cdots, G_{m} (t)$, with
\begin{equation} \label{Frder}
	G_{i} (t) = L(S(t)g_0)g_{i},   \quad i = 1, \cdots, m.
\end{equation}
Here $F^{\prime}(S(\tau)g_0)$ is the Fr\'{e}chet derivative of the map $F$ at  $S(\tau)g_0$, and $L(S(t)g_0)$ is the Fr\'{e}chet derivative of the map $S(t)$ at $g_0$, with $t$ being fixed. 

The following lemma, cf.  \cite[Chapter 5]{rT88},  will be used to show the finite upper bounds of the Hausdorff and fractal dimensions of this global attractor $\ms{A}$.

\begin{lemma} \label{L:HFd}
If there is an integer $m$ such that $q_{m} < 0$, then the Hausdorff dimension $d_{H} (\ms{A})$ and the fractal dimension $d_{F} (\ms{A})$ of $\ms{A}$ satisfy
\begin{equation}  \label{hfd}
	d_{H} (\ms{A}) \leq m,  \quad \textup{and} \quad d_{F} (\ms{A}) \leq m \max_{1 \leq j \leq m - 1} \left( 1 + \frac{(q_{j})_{+}}{| q_{m} |} \right) \leq 2m. 
\end{equation}
\end{lemma}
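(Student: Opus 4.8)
The plan is to bring Lemma~\ref{L:HFd} under the Constantin--Foias--Temam theory of exponential volume contraction along a global attractor, in the form developed in \cite[Chapter~5]{rT88}. That theory is abstract, so the task reduces to verifying its two structural hypotheses for the coupled Brusselator semiflow $\csg$ --- namely that $S(t)$ is uniformly quasi-differentiable on $\ms{A}$, and that the relevant traces are integrable along trajectories --- and then quoting its conclusion.

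First I would show that $\ms{A}$ is bounded in $E$ (in fact in $\Pi$): this follows by bootstrapping the $H$-absorbing estimates of Lemma~\ref{L:absb} with parabolic smoothing for the analytic semigroup $e^{At}$ of \eqref{opA}, using the uniform $\int_t^{t+1}\|\nabla(\cdot)\|^2$ bounds already obtained in \eqref{vztt}, \eqref{tyt}, \eqref{tpst}, plus one further step into $H^2$. On such a bounded invariant set, $F'$ evaluated along trajectories is a uniformly bounded family of linear operators on $H$, and a standard variation-of-constants/fixed-point argument (carried out for the one-cell Brusselator in \cite{yY07}) shows that $S(t)$ is Fr\'echet differentiable at each $g_0\in\ms{A}$, with derivative $L(S(t)g_0)$ equal to the solution operator of the linear variational equation
\[
	\frac{dG}{dt}=\bigl(A+F'(S(t)g_0)\bigr)G,\qquad G(0)=g_i,
\]
and with $\|L(S(t)g_0)\|$ bounded uniformly for $g_0\in\ms{A}$ on each bounded time interval.

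Next I would track the $m$-dimensional volume element. For unit vectors $g_1,\dots,g_m\in H$ the exterior product $G_1(t)\wedge\cdots\wedge G_m(t)$ satisfies
\[
	\frac{d}{dt}\,\|G_1(t)\wedge\cdots\wedge G_m(t)\|=\textup{Tr}\bigl((A+F'(S(t)g_0))\circ Q_m(t)\bigr)\,\|G_1(t)\wedge\cdots\wedge G_m(t)\|,
\]
so that, after integration and taking the supremum over $g_0\in\ms{A}$ and over unit $g_i$, a Fekete subadditivity argument applied to the logarithm of those suprema gives that the uniform exponential growth rate of the $m$-volumes transported by the linearized flow is at most $q_m$, with $q_m$ defined in \eqref{trq}. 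The hypothesis $q_m<0$ therefore says that all $m$-dimensional volumes decay uniformly exponentially along $\ms{A}$; the Constantin--Foias--Temam dimension theorem \cite[Chapter~5]{rT88} then yields $d_H(\ms{A})\le m$, and its refined version --- in which, for $1\le j\le m-1$, the at-most-exponential growth $q_j$ of $j$-volumes is weighed against the decay $|q_m|$ of the $m$-volumes in the covering count --- gives the fractal bound $d_F(\ms{A})\le m\max_{1\le j\le m-1}\bigl(1+(q_j)_+/|q_m|\bigr)\le 2m$.

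The main obstacle is the first step. Because the cross-coupled cubic terms $u^2v$ and $w^2z$ are not globally Lipschitz on $H$, establishing that $\ms{A}$ lies in a bounded subset of $E$ and that $S(t)$ is uniformly differentiable there genuinely requires the higher-regularity machinery --- uniform Gronwall in $H^1$ and a further bootstrap into $H^2$ --- of the kind used in Sections~4--5, now run on the attractor rather than on the absorbing ball. Once that regularity is secured, the trace computation, the volume-evolution identity, and the passage to the Hausdorff and fractal dimension bounds are all routine instances of the abstract theory.
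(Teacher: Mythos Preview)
The paper does not prove Lemma~\ref{L:HFd} at all: it is stated with the prefatory remark ``cf.~\cite[Chapter~5]{rT88}'' and is simply quoted as a known abstract result from Temam's book on the Constantin--Foias--Temam dimension theory. No argument is given in the paper.

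Your proposal correctly identifies the source and the mechanism (exponential contraction of $m$-volumes via the trace formula), but you have folded into the ``proof'' of Lemma~\ref{L:HFd} material that the paper places elsewhere. The verification that $\ms{A}$ is bounded in $E$ is the content of the separate Lemma~\ref{L:bdE}, and the Fr\'echet differentiability of $S(t)$ with derivative given by the variational system \eqref{vareveq} is asserted (without proof) in the paragraph preceding Lemma~\ref{L:bdE}. These are prerequisites for \emph{applying} Lemma~\ref{L:HFd} in Theorem~\ref{Dmn}, not ingredients in its proof. The lemma itself, as stated, is the general Temam result specialized to the present $q_m$, and the paper treats it as a black box. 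So your sketch is not wrong, but it over-delivers: a one-line citation to \cite[Chapter~5]{rT88} is all the paper offers, and the regularity work you describe belongs to the surrounding lemmas rather than here.
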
 
It can be shown that for any given $t > 0$, $S(t)$ is Fr\'{e}chet differentiable in $H$ and its Fr\'{e}chet derivative at $g_0$ is the bounded linear operator $L(S(t)g_0)$ given by 
$$
	L(S(t)g_0)G_0 \overset{\textup{def}}{=} G(t) = (U(t), V(t), W(t), Z(t)), \quad \textup{for any} \; G_0 = (U_0, V_0, W_0, Z_0) \in H,
$$
where $(U(t), V(t), W(t), Z(t))$ is the strong solution of the following coupled Brusselator variational equation
\begin{align}
	\frac{\partial U}{\partial t} & = d_1 \gd U + 2u(t)v(t) U + u^2 (t) V - (b + 1) U + D_1 (W - U), \label{varequ} \\
	\frac{\partial V}{\partial t} & = d_2 \gd V - 2u(t)v(t) U - u^2 (t) V + b U + D_2 (Z - V), \label{vareqv} \\
	\frac{\partial W}{\partial t} & = d_1 \gd W + 2w(t)z(t) W + w^2 (t) Z - (b + 1) W + D_1 (U - W), \label{varequ} \\
	\frac{\partial Z}{\partial t} & = d_2 \gd Z - 2w(t)z(t) W - w^2 (t) Z + b W + D_2 (V - Z), \label{vareqv} \\[3pt]
	& U(0) = U_0, \quad V(0) = V_0, \quad W(0) = W_0, \quad Z(0) = Z_0. \label{invv}
\end{align}
Here $g(t) = (u(t), v(t), w(t), z(t)) = S(t)g_0$ is the solution of \eqref{eveq} with the initial condition $g(0) = g_0$.  The initial value problem \eqref{varequ}--\eqref{invv} can be written as 
\begin{equation} \label{vareveq}
	\frac{dG}{dt} = (A + F^{\prime} (S(t)g_0))G,   \quad G(0) = G_0.
\end{equation}

Note that the invariance of $\ms{A}$ implies $\ms{A} \subset B_0$, where $B_0$ is the bounded absorbing set given in Lemma \ref{L:absb}. Hence we have
$$
	\sup_{g_0 \in \ms{A}} \, \| S(t)g_0 \|^2 \leq K_0,
$$
\begin{lemma} \label{L:bdE}
For the global attractor $\ms{A}$ of the coupled Brusselator semiflow $\csg$, there exists a uniform constant $K_1 > 0$ such that 
\begin{equation} \label{klbd}
	\| \nabla g \|^2 \leq K_1, \quad \textup{for any} \; g \in \ms{A}.
\end{equation}
\end{lemma}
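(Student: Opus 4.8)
The plan is to derive a uniform $H^1_0$-bound on the components of any point in the global attractor by exploiting its invariance together with the uniform integrability of the $H^1_0$-norms already established in Section 4 and Section 5 (estimates \eqref{tbt}, \eqref{tyt}, \eqref{tpst}), and then running the uniform Gronwall inequality on the full-domain analogues of the differential inequalities for $\|\nabla v\|^2$, $\|\nabla z\|^2$, $\|\nabla u\|^2$, $\|\nabla w\|^2$. First I would fix $g\in\ms{A}$; by invariance there is a complete orbit $\{S(t)g_0 : t\in\mathbb{R}\}\subset\ms{A}$ (equivalently, for every $t>0$ there is $g_0\in\ms{A}$ with $S(t)g_0=g$), and since $\ms{A}\subset B_0$ we have $\|S(t)g_0\|^2\le K_0$ for all $t\ge 0$ along such orbits. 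This lets me treat $g$ as $S(t)g_0$ for arbitrarily large $t$, so all the ``for $t>T_i$'' estimates from the previous sections apply.

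Next I would record the full-domain gradient inequalities. Inequality \eqref{nbv} (and its twin \eqref{nbz}) gives, after absorbing the cross terms $-2D_2\langle z,\gd v\rangle$ via Young's inequality and using $\|u\|^2+\|w\|^2\le K_0$, a differential inequality of the form
\begin{equation*}
	\frac{d}{dt}\left(\|\nabla v\|^2+\|\nabla z\|^2\right) + \frac{\ga d_2}{2}\left(\|\nabla v\|^2+\|\nabla z\|^2\right) \leq \frac{K_0}{d_2}(b^2+2D_2^2) + 2\left(\int_\gw v^2|\nabla u|^2\,dx + \int_\gw z^2|\nabla w|^2\,dx\right),
\end{equation*}
which however still contains the awkward terms $\int v^2|\nabla u|^2$. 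The clean way around this is to mimic Section 5 exactly but \emph{without} truncation: take inner products $\langle\eqref{equ},-\gd u\rangle$, $\langle\eqref{eqw},-\gd w\rangle$ over all of $\gw$, using $\|u\|_{L^4}^2\le\delta\|\nabla u\|^2$ from \eqref{delta} and the $L^\infty$-in-time $L^2$-bound; this produces a Gronwall-type inequality $\beta'\le r\beta+h$ for $\beta=\|\nabla u\|^2$ with $r$ and $h$ controlled. Then I would invoke \eqref{tyt}, \eqref{tpst}, \eqref{tbt} to bound $\int_t^{t+1}\|\nabla u(s)\|^2\,ds$ and $\int_t^{t+1}\|\nabla w(s)\|^2\,ds$ uniformly (just as \eqref{tut}--\eqref{twt} do, but over $\gw$ rather than $\gw_{|v|,M}$, which only makes the bound larger), so the uniform Gronwall inequality \cite{rT88, SY02} yields $\|\nabla u(t)\|^2+\|\nabla w(t)\|^2\le K_1'$ for $t$ large. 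Feeding this back, the $v,z$ inequality above then also closes via uniform Gronwall, since $\int_\gw v^2|\nabla u|^2\le \|v\|_{L^\infty}^2\|\nabla u\|^2$ — but to avoid $L^\infty$ bounds one instead bounds $\int v^2|\nabla u|^2 \le \|v\|_{L^6}^2\|\nabla u\|_{L^3}^2$ and uses interpolation, or more simply re-derives the $v,z$ bound directly from \eqref{nbv}--\eqref{nbz} by absorbing $\int v^2|\nabla u|^2$ differently; in any case the previously proven uniform-in-time integrability of $\|\gd v\|^2,\|\gd u\|^2$ over unit intervals does the job. Setting $K_1$ to be the maximum of the four resulting constants gives \eqref{klbd}.

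The main obstacle is handling the nonlinear gradient term $\int_\gw v^2|\nabla u|^2\,dx$ appearing in \eqref{nbv}: unlike in Section 4 where the factor $v^2$ was bounded by $M^2$ on the truncated set, here there is no truncation, so one must either bootstrap (first get the uniform $H^1_0$-bound on $u,w$ from the $u$- and $w$-equations, where the analogous bad term is $M^2\|u\|_{L^4}^4$ — but again no truncation — so really one needs $\int u^4|\gd u|$-type control, handled by $\|u\|_{L^4}^2\le\delta\|\nabla u\|^2$ and cubic absorption as in \eqref{Nbu}) or exploit the already-established $\int_t^{t+1}\|\nabla(\cdot)\|^2\,ds$ bounds so that the uniform Gronwall inequality applies with a coefficient $r(t)$ that is only integrably large, not pointwise bounded. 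This is exactly the structure of \eqref{Gron}--\eqref{Nbum}, so the argument is a direct adaptation; the only real care is checking that every constant produced is independent of $g_0\in\ms{A}$, which follows because $\ms{A}\subset B_0$ and all the Section 4–5 constants depend only on $K_0$ and (here) on no truncation parameter $M$.
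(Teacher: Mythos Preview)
Your approach is genuinely different from the paper's and, as written, has a real gap at the point you yourself flag as the main obstacle.

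The paper does \emph{not} run a full-domain uniform Gronwall argument. Instead it argues by contradiction: first it uses analytic-semigroup regularity (\eqref{solnpr}) to place every trajectory on $\ms{A}$ into $C^{0,1/2}_{\textup{loc}}((t_0,\infty),E)$; then, assuming $\|\nabla g_\ell\|\ge N_\ell\to\infty$ along a sequence in $\ms{A}$, it invokes invariance to write $g_\ell=S(N_\ell)g_\ell^0$ with $g_\ell^0\in\ms{A}$, and the H\"older continuity in $E$ forces $\|\nabla S(t)g_\ell^0\|$ to stay near $N_\ell$ on a unit interval about $N_\ell$. This contradicts the uniform bounds $\int_t^{t+1}\|\nabla S(s)g_0\|^2\,ds\le C$ coming from \eqref{vztt}, \eqref{tut}, \eqref{twt}. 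No differential inequality for $\|\nabla u\|^2$ on the full domain is ever needed.

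Your direct plan breaks precisely where you assert it is ``exactly the structure of \eqref{Gron}--\eqref{Nbum}''. It is not. In \eqref{Nbu} the truncation $|v|<M$ is what converts $\int u^2 v(-\gd u)\,dx$ into $M\int u^2|\gd u|\,dx$ and hence into a coefficient $r(t)=\frac{M^2\delta^2}{d_1}\|\nabla u\|^2$ with $\int_t^{t+1}r\,ds$ controlled by \eqref{tut}. On the full domain there is no constant bound on $v$; the best one gets from $H^1_0\hookrightarrow L^6$ is
\[
\int_\gw u^2 v(-\gd u)\,dx \;\le\; \tfrac{d_1}{2}\|\gd u\|^2 + \tfrac{C}{d_1}\|\nabla u\|^4\|\nabla v\|^2,
\]
so the Gronwall coefficient becomes $r(t)\sim \|\nabla u(t)\|^2\|\nabla v(t)\|^2$. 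Knowing $\int_t^{t+1}\|\nabla u\|^2\,ds$ and $\int_t^{t+1}\|\nabla v\|^2\,ds$ separately does \emph{not} bound $\int_t^{t+1}\|\nabla u\|^2\|\nabla v\|^2\,ds$, so the uniform Gronwall inequality does not apply. The same circularity blocks your bootstrap idea: bounding $\|\nabla u\|$ pointwise needs pointwise control of $v$ (or $\nabla v$), and vice versa, while the interpolation route $\int v^2|\nabla u|^2\le \|v\|_{L^6}^2\|\nabla u\|_{L^3}^2$ brings in $\|\gd u\|$, which is again uncontrolled. To salvage a direct argument you would need an additional ingredient---an $L^\infty$ bound on $v,z$, an $H^2$ smoothing estimate, or a more careful splitting---none of which you supply. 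The paper's soft contradiction argument sidesteps all of this.
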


\begin{proof}
In \eqref{eveq}, $A: D(A) (= \Pi) \to H$ is a positive sectorial operator and $F \in C_{\textup{loc}}^{\textup{Lip}} (E, H)$. By \eqref{soln}, for any $g_0 \in \ms{A}$, there is a $t_0 \in (0, 1/2)$ such that $S(t_0)g_0 \in E$. By the solution theory in \cite[Section 4.7]{SY02}, one has
\begin{equation} \label{solnpr}
	S(\cdot )g_0 \in C([t_0, \infty), E) \cap C_{\textup{loc}}^{0, \frac{1}{2}} ((t_0, \infty), E) \cap C((t_0, \infty), \Pi),
\end{equation}
were $C_{\textup{loc}}^{0, \frac{1}{2}}$ stands for the space of H\"{o}lder strongly continuous functions with  exponent $1/2$. 

Since $S(t)\ms{A} = \ms{A}$, for any $\widehat{g} \in \ms{A}$ and any $t \geq 1$, there is a particular $g_0 \in \ms{A}$ such that $\widehat{g} = S(t)g_0$. Therefore, the global attractor has the regularity that $\ms{A} \subset E$. 

Next we prove that $\ms{A}$ is a bounded set in $E$, so that \eqref{klbd} holds. Suppose the contrary. Then there exist sequences $\{N_{\ell}\} \subset (0, \infty)$, with $N_{\ell} \geq \ell$, and $\{g_\ell \} \subset \ms{A}$, such that
$$
	\| \nabla g_{\ell} \|\geq N_{\ell}, \quad \ell = 1, 2, \cdots .
$$
Let $g_{\ell}^{0} \in \ms{A}$ be given such that $g_{\ell} = S(N_{\ell})g_{\ell}^{0}$. By \eqref{solnpr}, there is a H\"{o}lder constant $c_{0} > 0$ such that 
$$
	\| \nabla S(t)g_{\ell}^{0} \|^2 \geq \left(N_{\ell} - \frac{c_{0}}{\sqrt{2}} \right)_{+}^2, \quad \textup{for} \; \, t \in I_{\ell} = (N_{\ell} - \frac{1}{2}, N_{\ell} + \frac{1}{2}).
$$
This shows that 
$$
	\int_{N_{\ell} - \frac{1}{2}}^{N_{\ell} + \frac{1}{2}} \, \| \nabla S(\tau)g_{\ell}^{0} \|^2 \, d\tau \geq \left(N_{\ell} - \frac{c_{0}}{\sqrt{2}} \right)_{+}^2 \longrightarrow \infty, \quad \textup{as} \;  \, \ell \to \infty.
$$
This is a contradiction to the fact that for any $g_0 \in \ms{A}$, due to \eqref{vztt},
$$
	\int_{t}^{t + 1} \, (\| \nabla v(s) \|^2 + \| \nabla z(s) \|^2) \, ds \leq \frac{1}{d_2} \left( K_0 + \frac{b^2 |\gw |}{2\ga d_2} \right ) + \frac{b^2 |\gw |}{d_2}, \quad \text{for any } \; t > 0,
$$ 
and, due to \eqref{tut} and \eqref{twt},
$$
	\int_{t}^{t + 1} \, (\| \nabla u(s) \|^2 + \| \nabla w(s) \|^2)\, ds \leq 2C_8, \quad \text{for any} \; t > 0,
$$
where the right-hand side of the above two inequalities are uniform positive constants. Therefore, the conclusion holds.
\end{proof}

\begin{theorem} \label{Dmn}
The global attractors $\ms{A}$ for the coupled Brusselator semiflow $\csg$ has a finite Hausdorff dimesion and a finite fractal dimension.
\end{theorem}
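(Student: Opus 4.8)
The plan is to apply Lemma \ref{L:HFd}: it suffices to exhibit an integer $m$ such that $q_m < 0$, where $q_m = \limsup_{t\to\infty} q_m(t)$ and $q_m(t)$ involves the time-averaged trace of $A + F'(S(\tau)g_0)$ along trajectories in $\ms{A}$. The natural route is a standard Constantin--Foias--Temam trace estimate. Fix $m$ and an arbitrary orthonormal family $\{\varphi_j(\tau)\}_{j=1}^m$ in $H$ spanning $Q_m(\tau)H$, where each $\varphi_j = (\varphi_j^u,\varphi_j^v,\varphi_j^w,\varphi_j^z)$; then
$$
\textup{Tr}\,(A + F'(S(\tau)g_0))\circ Q_m(\tau) = \sum_{j=1}^m \langle (A + F'(S(\tau)g_0))\varphi_j, \varphi_j \rangle.
$$
The contribution of $A$ gives $-\sum_j (d_1\|\nabla\varphi_j^u\|^2 + d_2\|\nabla\varphi_j^v\|^2 + d_1\|\nabla\varphi_j^w\|^2 + d_2\|\nabla\varphi_j^z\|^2) \le -d_0 \sum_j \|\nabla\varphi_j\|^2$ with $d_0 = \min\{d_1,d_2\}$. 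The contribution of $F'(S(\tau)g_0)$ is bounded by reading off the variational matrix in \eqref{varequ}--\eqref{vareqv}: the genuinely dangerous entries are the cubic-type terms $2uv$, $u^2$, $2wz$, $w^2$ (the linear coupling and reaction constants $b$, $b+1$, $D_1$, $D_2$ contribute only a bounded additive constant times $m$). Each dangerous term is estimated pointwise using $|uv|, |wz| \le \tfrac12(u^2+v^2+\cdots)$ and the bound $\|\nabla g\|^2 \le K_1$ for $g = S(\tau)g_0 \in \ms{A}$ from Lemma \ref{L:bdE}, combined with the Sobolev embeddings $H_0^1(\gw)\hookrightarrow L^6(\gw)$ and $H_0^1(\gw)\hookrightarrow L^4(\gw)$ valid for $n\le 3$. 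For instance a term like $\int_\gw u^2 \varphi_j^v \varphi_j^u\,dx$ is controlled by $\|u^2\|_{L^3}\|\varphi_j^u\|_{L^6}\|\varphi_j^v\|$ and then by $C\|\nabla u\|^2\,\|\nabla\varphi_j^u\|\,\|\varphi_j^v\|$, which after Young's inequality is absorbed into $\tfrac{d_0}{2}\|\nabla\varphi_j\|^2$ plus a constant depending only on $K_1$.

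Next I would invoke the generalized Sobolev--Lieb--Thirring inequality: since $\{\varphi_j\}_{j=1}^m$ is orthonormal in $H = (L^2(\gw))^4$ and lies in $E = (H_0^1(\gw))^4$, one has the lower bound
$$
\sum_{j=1}^m \|\nabla\varphi_j\|^2 \;\ge\; c_1\,|\gw|^{-2/n} m^{1+2/n} \;-\; c_2 m
$$
for constants $c_1, c_2 > 0$ depending only on $\gw$ and $n$ (this is the standard estimate, e.g. in \cite[Chapter 5]{rT88}, applied componentwise and summed). Combining the two parts yields, uniformly in $\tau$ and in the choice of the orthonormal family,
$$
\textup{Tr}\,(A + F'(S(\tau)g_0))\circ Q_m(\tau) \;\le\; -\tfrac{d_0}{2}\,c_1\,|\gw|^{-2/n}\,m^{1+2/n} \;+\; C_\ast\, m,
$$
where $C_\ast$ depends only on $d_1,d_2,a,b,D_1,D_2,\gw$ and the attractor bound $K_1$. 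Taking the time average and then the $\limsup$ preserves this inequality, so $q_m \le -\tfrac{d_0}{2}c_1 |\gw|^{-2/n} m^{1+2/n} + C_\ast m$. The right-hand side is negative once $m > (2C_\ast/(d_0 c_1))^{n/2}|\gw|$; fix any such integer $m$. Then $q_m < 0$, and Lemma \ref{L:HFd} gives $d_H(\ms{A}) \le m < \infty$ and $d_F(\ms{A}) \le 2m < \infty$.

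The main obstacle is the trace estimate for the nonlinear part: one must verify that \emph{every} off-diagonal and diagonal entry of the $4\times 4$ variational operator coming from the cubic autocatalytic terms can be absorbed into the dissipative part with only an $O(m)$ remainder, and the key leverage for this is precisely the uniform $H_0^1$-bound $\|\nabla g\|^2 \le K_1$ on $\ms{A}$ established in Lemma \ref{L:bdE} together with the sub-criticality of the cubic nonlinearity in dimension $n\le 3$. Once that bound is in hand, the remaining steps — the Lieb--Thirring inequality and the arithmetic to make $q_m$ negative — are routine.
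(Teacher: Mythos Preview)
Your proposal is correct and follows essentially the same strategy as the paper: bound the trace of $A+F'(S(\tau)g_0)$ from above using $d_0=\min\{d_1,d_2\}$ for the dissipative part, control the cubic variational terms via the uniform $E$-bound $\|\nabla g\|^2\le K_1$ on $\ms{A}$ from Lemma~\ref{L:bdE} together with Sobolev embeddings, absorb into $\tfrac{d_0}{2}\sum_j\|\nabla\varphi_j\|^2$ by Young, and finish with the Lieb--Thirring inequality and Lemma~\ref{L:HFd}. The only (minor) technical difference is that the paper estimates the nonlinear terms by placing all four factors in $L^4$ and then applying the Gagliardo--Nirenberg interpolation $\|\varphi_j\|_{L^4}\le C\|\nabla\varphi_j\|^{n/4}$ (using $\|\varphi_j\|=1$) before Young, whereas you use the $L^3$--$L^6$--$L^2$ H\"older split directly; both routes yield the same $-\tfrac{d_0}{2}c_1|\gw|^{-2/n}m^{1+2/n}+C_\ast m$ bound, and the paper's Lieb--Thirring version (Dirichlet, no $-c_2m$ correction) is in fact slightly cleaner than the one you quote.
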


\begin{proof}
By Lemma \ref{L:HFd}, we shall estimate $\textup{Tr} \, (A + F^{\prime} (S(\tau)g_0 )) \circ Q_{m}(\tau)$. At any given time $\tau > 0$, let $\{\varphi_{j} (\tau): j = 1, \cdots , m\}$ be an $H$-orthonormal basis for the subspace 
$$
	Q_m(\tau) H = \textup{Span}\,  \{G_1(\tau), \cdots , G_,(\tau) \},
$$
where $G_1 (t), \cdots , G_{m} (t)$ satisfy \eqref{vareveq} with the respective initial values $G_{1,0}, \cdots , G_{m,0}$ and, without loss of generality, assuming that $G_{1,0}, \cdots , G_{m,0}$ are linearly independent in $H$. By the Gram-Schmidt orthogonalization scheme,  $\varphi_{j}(\tau) = (\varphi_{j}^1 (\tau), \varphi_{j}^2 (\tau),\varphi_{j}^3 (\tau), \varphi_{j}^4 (\tau) ) \in E$, for $j = 1, \cdots , m$,  and $\varphi_{j} (\tau)$ are strongly measurable in $\tau$. Let $d_0 = \min \{d_1, d_2 \}$. Then we have
\begin{equation} \label{Trace}
		\begin{split}
	\textup{Tr} \, (A + F^{\prime} (S(\tau)g_0 )\circ Q_m(\tau)& = \sum_{j=1}^{m} \left( \langle A \varphi_{j}(\tau), \varphi_{j}(\tau) \rangle + \langle  F^{\prime} (S(\tau)g_0 ) \varphi_{j}(\tau), \varphi_{j}(\tau) \rangle\right)  \\
	& \leq - d_0 \sum_{j=1}^{m} \, \| \nabla \varphi_{j}(\tau) \|^2 + J_1 + J_2 + J_3,  
		\end{split}
\end{equation}
where
\begin{align*}
	J_1 & = \sum_{j=1}^{m} \int_{\gw} 2 u(\tau) v(\tau) \left( |\varphi_{j}^1 (\tau) |^2 -  \varphi_{j}^1 (\tau) \varphi_{j}^2 (\tau)  \right) dx  \\
	& + \sum_{j=1}^{m} \, \int_{\gw} 2 w(\tau) z(\tau) \left( |\varphi_{j}^3 (\tau) |^2 -  \varphi_{j}^3 (\tau) \varphi_{j}^4 (\tau)  \right) dx,
\end{align*}
\begin{align*}
	J_2 &= \sum_{j=1}^{m} \int_{\gw} \left( u^2(\tau) \left( \varphi_{j}^1 (\tau) \varphi_{j}^2 (\tau) - | \varphi_{j}^2 (\tau) |^2 \right) + w^2(\tau) \left( \varphi_{j}^3 (\tau) \varphi_{j}^4 (\tau) - | \varphi_{j}^4 (\tau) |^2 \right)\right) dx  \\
	&\leq  \sum_{j=1}^{m} \int_{\gw} \left( u^2(\tau) | \varphi_{j}^1 (\tau) | |\varphi_{j}^2 (\tau)| + w^2(\tau) | \varphi_{j}^3 (\tau) | |\varphi_{j}^4 (\tau)| \right) dx,
\end{align*}
and
\begin{align*}
	J_3 & =  \sum_{j=1}^{m} \int_{\gw} \left( - (b + 1) (|\varphi_{j}^1 (\tau) |^2 + |\varphi_{j}^3 (\tau) |^2) + b (\varphi_{j}^1 (\tau) \varphi_{j}^2 (\tau) + \varphi_{j}^3 (\tau) \varphi_{j}^4 (\tau)) \right) dx \\
	& - \sum_{j=1}^{m} \int_{\gw} \left(D_1 \left(\varphi_j^1 (\tau) - \varphi_j^3 (\tau)\right)^2 + D_2 \left(\varphi_j^3 (\tau) - \varphi_j^4 (\tau)\right)^2 \right) dx \\
	& \leq \sum_{j=1}^{m} \int_{\gw} \, b \left(\varphi_{j}^1 (\tau) \varphi_{j}^2 (\tau) + \varphi_{j}^3 (\tau) \varphi_{j}^4 (\tau) \right) dx.
\end{align*}

We can estimate each of the three terms as follows. First, by the generalized H\"{o}lder inequality and the Sobolev embedding $H_0^1 (\gw) \hookrightarrow  L^4 (\gw)$ for $n \leq 3$, and using Lemma \ref{L:bdE}, we get
\begin{equation} \label{J1eq}
		\begin{split}
	J_1& \leq 2 \sum_{j=1}^{m} \| u(\tau) \|_{L^4} \| v(\tau) \|_{L^4}  \left( \| \varphi_{j}^1 (\tau) \|_{L^4}^2 + \| \varphi_{j}^1(\tau) \|_{L^4}  \| \varphi_{j}^2 (\tau) \|_{L^4} \right) \\
	& + 2 \sum_{j=1}^{m} \| w(\tau) \|_{L^4} \| z(\tau) \|_{L^4}  \left( \| \varphi_{j}^3 (\tau) \|_{L^4}^2 + \| \varphi_{j}^3(\tau) \|_{L^4}  \| \varphi_{j}^4 (\tau) \|_{L^4} \right) \\
	& \leq 4 \sum_{j=1}^{m} \| S(\tau)g_0 \|_{L^4}^2  \| \varphi_{j} (\tau) \|_{L^4}^2 \leq 4 \delta \sum_{j=1}^{m} \| \nabla S(\tau)g_0 \|^2   \| \varphi_{j} (\tau) \|_{L^4}^2 \\
	& \leq 4 \delta K_1  \sum_{j=1}^{m} \| \varphi_{j} (\tau) \|_{L^4}^2, 
		\end{split}
\end{equation}
where $\delta$ is the Sobolev embedding coefficient given in \eqref{delta}. Now we apply the Garliardo-Nirenberg interpolation inequality, cf. \cite[Theorem B.3]{SY02},
\begin{equation} \label{GNineq}
	\| \varphi \|_{W^{k,p}} \leq C \| \varphi \|_{W^{m,q}}^{\theta} \| \varphi \|_{L^{r}}^{1 - \theta}, \quad \textup{for} \; \varphi \in W^{m,q}(\gw),
\end{equation}
provided that $p, q, r \geq 1, 0 < \theta \leq 1$, and
$$
	k - \frac{n}{p} \leq \theta \left( m - \frac{n}{q} \right)  - (1 - \theta ) \frac{n}{r},   \quad \textup{where} \; \, n = \textup{dim} \, \gw.
$$
Here with $W^{k, p}(\gw) = L^4(\gw), W^{m, q}(\gw) = H_{0}^{1}(\gw), L^{r}(\gw) = L^2(\gw)$, and $\theta = n/4 \leq 3/4$, it follows from \eqref{GNineq} that
\begin{equation} \label{inter}
	\| \varphi_{j} (\tau) \|_{L^4} \leq C \| \nabla \varphi_{j} (\tau) \|^{\frac{n}{4}} \| \varphi_{j} (\tau) \|^{1 - \frac{n}{4}} = C \| \nabla \varphi_{j} (\tau) \|^{\frac{n}{4}}, \quad j = 1, \cdots , m,
\end{equation}
since $\| \varphi_{j}(\tau) \| = 1$, where $C$ is a uniform constant. Substitute \eqref{inter} into \eqref{J1eq} to obtain
\begin{equation} \label{J1est}
	J_1 \leq 4\delta K_1 C^2 \sum_{j=1}^{m} \, \| \nabla \varphi_{j} (\tau) \|^{\frac{n}{2}}.
\end{equation}
Similarly, by the generalized H\"{o}lder inequality, we can get
\begin{equation} \label{J2est}
	J_2 \leq \delta K_1 \sum_{j=1}^{m} \| \varphi_{j} (\tau) \|_{L^4}^2 \leq \delta K_1 C^2 \sum_{j=1}^{m} \, \| \nabla \varphi_{j} (\tau) \|^{\frac{n}{2}}.
\end{equation}
Moreover, we have 
\begin{equation} \label{J3est}
	J_3 \leq \sum_{j=1}^{m}  b \| \varphi_{j} (\tau) \|^2 = b m.
\end{equation}
Substituting \eqref{J1est}, \eqref{J2est} and \eqref{J3est} into \eqref{Trace}, we obtain
\begin{equation} \label{Trest}
	\textup{Tr} \, (A + F^{\prime} (S(\tau)g_0 )\circ Q_m(\tau) \leq - d_0 \sum_{j=1}^{m} \| \nabla \varphi_{j}(\tau) \|^2 + 5\delta K_1 C^2 \sum_{j=1}^{m} \| \nabla \varphi_{j}(\tau) \|^{\frac{n}{2}} + b m.
\end{equation}
By Young's inequality, for $n \leq 3$, we have
$$
	5 \delta K_1 C^2 \sum_{j=1}^{m} \| \nabla \varphi_{j}(\tau) \|^{\frac{n}{2}}  \leq \frac{d_0}{2} \sum_{j=1}^{m} \|\nabla \varphi_{j}(\tau) \|^2 + K_2(n) m,
$$
where $K_2(n)$ is a uniform constant depending only on $n = $ dim $(\gw)$. Hence,
\begin{equation*} 
	\textup{Tr} \, (A + F^{\prime} (S(\tau)g_0 )\circ Q_m(\tau) \leq - \frac{d_0}{2} \sum_{j=1}^{m} \| \nabla \varphi_{j}(\tau) \|^2 + \left( K_2 (n) + b \right) m, \quad \tau > 0, \, g_0 \in \ms{A}. 
\end{equation*}
According to the generalized Sobolev-Lieb-Thirring inequality \cite[Appendix, Corollary 4.1]{rT88}, since $\{ \varphi_1 (\tau), \cdots , \varphi_{m} (\tau) \}$ is an orthonormal set in $H$, so there exists a uniform constant $K_3 > 0$ only depending on the shape and dimension of $\gw$, such that
\begin{equation} \label{SLTineq}
	\sum_{j=1}^{m} \| \nabla \varphi_{j}(\tau) \|^2  \geq K_3 \frac{m^{1 + \frac{2}{n}}}{|\gw |^{\frac{2}{n}}}.
\end{equation}
Therefore, we end up with
\begin{equation} \label{finest}
	\textup{Tr} \, (A + F^{\prime} (S(\tau)g_0 )\circ Q_m(\tau) \leq - \frac{d_0 K_3}{2 |\gw |^{\frac{2}{n}}} m^{1 + \frac{2}{n}} + \left( K_2 (n) + b \right) m, \quad \tau > 0,\, g_0 \in \ms{A}.
\end{equation}
Then we can conclude that
\begin{equation} \label{qmt}
		\begin{split}
	q_{m}(t) & =  \sup_{g_0 \in \ms{A}} \;  \, \sup_{\substack{g_{i} \in H, \|g_{i} \| = 1\\ i = 1, \cdots, m}} \; \, \left( \frac{1}{t} \int_{0}^{t} \textup{Tr}  \left( A + F^{\prime} (S(\tau) g_0 ) \right) \circ Q_{m} (\tau) \, d\tau \right)  \\
	& \leq - \, \frac{d_0 K_3}{2 |\gw |^{\frac{2}{n}}} m^{1 + \frac{2}{n}}  + \left(K_2(n) + b\right) m, \quad \textup{for any} \; t > 0, 
		\end{split}
\end{equation}
so that 
\begin{equation}  \label{qm}
	q_m = \limsup_{t \to \infty} \, q_m (t) \leq  - \, \frac{d_0 K_3}{2 |\gw |^{\frac{2}{n}}} m^{1 + \frac{2}{n}}  + \left(K_2(n) + b\right) m < 0,
\end{equation}
if the integer $m$ satisfies the following condition,
\begin{equation} \label{dimc}
	m - 1 \leq \left( \frac{2(K_2(n) + b)}{d_0 K_3} \right)^{n/2} | \gw | < m.
\end{equation}
According to Lemma \ref{L:HFd}, we have shown that the Hausdorff dimension and the fractal dimension of the global attractor $\ms{A}$ are finite and their upper bounds are given by
$$
	d_{H} (\ms{A}) \leq m \quad \textup{and} \quad d_{F} (\ms{A}) \leq 2m,
$$
where $m$ satisfies \eqref{dimc}. 
\end{proof}


\begin{thebibliography}{99}
\bibitem{AO78}
	M Ashkenazi and H.G. Othmer, 
	\emph{Spatial patterns in coupled biochemical oscillators},
	J. Math. Biology, \textbf{5} (1978), 305-350.
\bibitem{AN75}
	J.F.G. Auchmuty and G. Nicolis,
	\emph{Bifurcation analysis of nonlinear reaction-diffusion equations I: Evolution equations and the steady state solutions},
	Bull. Math. Biology, \textbf{37} (1975), 323-365.
\bibitem{jB77}
	J.M. Ball,
	\emph{Strongly continuous semigroups, weak solutions, and the variation of constants formula},
	Proc. Amer. Math. Soc., \textbf{63} (1977), 370-373.
\bibitem{BGMLF00}
	A.Yu. Berezin, A. Gainoval, Yu.G. Matushkin, V.A. Likhoshval, and S.I. Fadeev,
	\emph{Numerical study of mathematical models described dynamics of gene nets functioning: software package STEP}.
	BGRS 2000, 243-245.
\bibitem{BD95}
	K.J. Brown and F.A. Davidson, 
	\emph{Global bifurcation in the Brusselator system},
	 Nonlinear Analysis, \textbf{24} (1995), 1713-1725.
\bibitem{CQ07}
    	X. Chen and Y. Qi,
    	\emph{Sharp estimates on minimum traveling wave speed of reaction-diffusion systems modeling autocatalysis},
    	SIAM J. Math. Anal., \textbf{39} (2007), 437--448.
\bibitem{gD87}
	G. Dangelmayr,
	\emph{Degenerate bifurcation near a double eigenvalue in the Brusselator},
	J. Austral. Math. Soc., Ser. B, \textbf{28} (1987), 486-535.
\bibitem{DKZ97}
	A. Doelman, T.J. Kaper, and P.A. Zegeling,
	\emph{Pattern formation in the one-dimensional Gray-Scott model},
	Nonlinearity, \textbf{10} (1997), 523-563.
\bibitem{iE84}
	I.R. Epstein,
	\emph{Complex dynamical behavior in simple chemical systems},
	J. Phys. Chemistry, \textbf{88} (1984), 187-198.
\bibitem{ER83}
	T. Erneux and E. Reiss,
	\emph{Brusselator isolas},
	SIAM J. Appl. Math., \textbf{43} (1983), 1240-1246.
\bibitem{FXWO08}
	Z. Fu, X. Xu, H. Wang, and Q. Ouyang,
	\emph{Stochastic simulation of Turing patterns},
	Chin. Phys. Lett., \textbf{25} (2008), 1220-1223.
\bibitem{GLI07}
	P. Gormley, K. Li, and G.W. Irwin,
	\emph{Modeling molecular interaction pathways using a two-stage identification algorithm}
	Systems and Synthetic Biology, \textbf{1} (2007), 145-160.
\bibitem{GS83}
    	P. Gray and S.K. Scott,
    	\emph{Autocatalytic reactions in the isothermal continuous stirred tank reactor: isolas and other forms of multistability},
    	Chem. Eng. Sci., \textbf{38} (1983), 29--43.
\bibitem{GS84}
    	P. Gray and S.K. Scott,
    	\emph{Autocatalytic reactions in the isothermal, continuous stirred tank reactor: oscillations and instabilities in the system $A + 2B \to 3B, B \to C$},
    	Chem. Eng. Sci., \textbf{39} (1984), 1087--1097.
\bibitem{jH88}
	J.K. Hale, 
	\emph{Asymptotic Behavior of Dissipative Systems},
	Amer. Math. Soc., Providence, RI, 1988.
\bibitem{IK06}
	S. Ishihara and K. Kanedo,
	\emph{Turing pattern with proportion preservation},
	J. Theor. Biol., \textbf{238} (2006), 683-693.
\bibitem{KCD97}
	I. Karafyllis, P.D. Christofides and P. Daoutidis,
	\emph{Dynamical analysis of a reaction-diffusion system with Brusselator kinetics under feedback control},
	Proc. Amer. Control Conference, Albuquerque, NM, June 1997, 2213-2217.
\bibitem{KS80}
	M. Kawato and R. Suzuki,
	\emph{Two coupled neural oscillators as a model of the circadian pacemaker},
	J. Theor. Biol., \textbf{86} (1980), 547-575.
\bibitem{hK02}
	H. Kitano,
	\emph{Systems biology: a brief overview},
	Science, \textbf{295} (2002), 1662-1664.
\bibitem{aK83}
	A. Kli\u{c},
	\emph{Period doubling bifurcations in a two-box model of the Brusselator},
	Aplikace Matematiky, \textbf{28} (1983), 335-343
\bibitem{KEW06}
	T. Kolokolnikov, T. Erneux, and J. Wei,
	\emph{Mesa-type patterns in one-dimensional Brusselator and their stability},
	Physica D, \textbf{214}(1) (2006), 63-77.
\bibitem{LMOS93}
	K.J. Lee, W.D. McCormick, Q. Ouyang, and H. Swinney,
	\emph{Pattern formation by interacting chemical fronts},
	Science, \textbf{261} (1993), 192-194.
\bibitem{jP93}
	J.E. Pearson,
	\emph{Complex patterns in a simple system},
	Science, \textbf{261} (1993), 189-192.
\bibitem{PW05}
	R. Peng and M. Wang,
	\emph{Pattern formation in the Brusselator system},
	J. Math. Anal. Appl., \textbf{309} (2005), 151-166.
\bibitem{PL68}
	I. Prigogine and R. Lefever,
	\emph{Symmetry-breaking instabilities in dissipative systems},
	J. Chem. Physics, \textbf{48} (1968), 1665-1700.
\bibitem{PPG01}
	B. Pe\~{n}a and C. P\'{e}rez-Garc\'{i}a,
	\emph{Stability of Turing patterns in the Brusselator model},
	Phys. Review E, \textbf{64}(5), 2001.
\bibitem{yQ07}
    	Y. Qi, 
    	\emph{The development of traveling waves in cubic auto-catalysis with different rates of diffusion},
    	Physica D, \textbf{226} (2007), 129--135.
\bibitem{RR95}
	T. Rauber and G. Runger,
	\emph{Aspects of a distributed solution of the Brusselator equation},
	Proc. of the First Aizu International Symposium on Parallel Algorithms and Architecture Syntheses, (1995), 114-120.
\bibitem{RPP97}
    	W. Reynolds, J.E. Pearson, and S. Ponce-Dawson,
    	\emph{Dynamics of self-replicating patterns in reaction-diffusion systems},
   	 Phys. Rev. E, \textbf{56} (1997), 185--198.
\bibitem{jS79}
	J. Schnackenberg,
	\emph{Simple chemical reaction systems with limit cycle behavior},
	J. Theor. Biology, \textbf{81} (1979), 389-400.
\bibitem{SM82}
	I. Schreiber and M. Marek,
	\emph{Strange attractors in coupled reaction-diffusion cells},
	Physica D, \textbf{5} (1982), 258-272.
\bibitem{SS95}
    	S.K. Scott and K. Showalter,
    	\emph{Simple and complex reaction-diffusion fronts},
    	in "Chemical Waves and Patterns", R. Kapral and K. Showalter (eds),  Kluwer Acad. Publ., Dordrecht, 1995, 485--516.
\bibitem{eS68}
    	E.E. Sel'kov,
    	\emph{Self-oscillations in glycolysis: a simple kinetic model},
    	Euro. J. Biochem., \textbf{4} (1968), 79--86.
\bibitem{SY02}
	G.R. Sell and Y. You, 
	\emph{Dynamics of Evolutionary Equations},
	Springer, New York, 2002.
\bibitem{sS94}
	S.H. Strogatz, 
	\emph{Nonlinear Dynamics and Chaos},
	Westview Press, 1994.
\bibitem{rT88}
	R. Temam,
	\emph{Infinite Dimensional Dynamical Systems in Mechanics and Physics},
	Springer-Verlag, New York, 1988.
\bibitem{TCN01}
	J.J. Tyson, K. Chen and B. Novak,
	\emph{Network dynamics and cell physiology},
	Nature Reviews: Molecular Cell Biology, \textbf{2} (2001), 908-916.
\bibitem{WW03}
	J. Wei and M. Winter,
	\emph{Asymmetric spotty patterns for the Gray-Scott model in $\Re^2$},
	Stud. Appl. math., \textbf{110} (2003), 63-102.
\bibitem{Wet96}
	A. De Wit, D. Lima, G. Dewel, and P. Borckmans, 
	\emph{Spatiotemporal dynamics near a codimension-two point},
	Phys. Review E, \textbf{54} (1), 1996.
\bibitem{YZE04}
	L. Yang, A.M. Zhabotinsky, and I.R. Epstein, 
	\emph{Stable squares and other oscillatory Turing patterns in a reaction-diffusion model},
	Phys. Rev. Lett., \textbf{92} (2004), 198303:1--4.    
\bibitem{yY07}
    	Y. You,
    	\emph{Global dynamics of the Brusselator equations},
    	Dynamics of PDE, \textbf{4} (2007), 167--196.
\bibitem{yY08}
    	Y. You,
    	\emph{Global attractor of the Gray-Scott equations},
    	Comm. Pure Appl. Anal., \textbf{7} (2008), 947--970.
\bibitem{yY09a}
    	Y. You,
    	\emph{Asymptotical dynamics of Selkov equations},
    	Discrete and Continuous Dynamical Systems, Series S, \textbf{2} (2009), 193--219.
\bibitem{yY09b}
	Y. You,
	\emph{Asymptotical dynamics of the modified Schnackenberg equations},
	Proc. AIMS Conference, 2009, in press.

\end{thebibliography}
\end{document}